\numberwithin{equation}{section}
\theoremstyle{plain}
\newtheorem{thm}{\protect\theoremname}[section]
\theoremstyle{remark}
\newtheorem{rem}[thm]{\protect\remarkname}
\theoremstyle{plain}
\newtheorem{lem}[thm]{\protect\lemmaname}
\theoremstyle{plain}
\newtheorem{cor}[thm]{\protect\corollaryname}
\theoremstyle{plain}
\newtheorem{prop}[thm]{\protect\propositionname}
\definecolor{green}{rgb}{0,0.8,0} 
\newcommand{\nrm}{\@ifstar{\nrmb}{\nrmi}}
\newcommand{\nrmi}[1]{\Vert{#1}\Vert}
\newcommand{\nrmb}[1]{\left\Vert{#1}\right\Vert}
\newcommand{\abs}{\@ifstar{\absb}{\absi}}
\newcommand{\absi}[1]{\vert{#1}\vert}
\newcommand{\absb}[1]{\left\vert{#1}\right\vert}
\newcommand{\brk}{\@ifstar{\brkb}{\brki}}
\newcommand{\brki}[1]{\langle{#1}\rangle}
\newcommand{\brkb}[1]{\left\langle{#1}\right\rangle}
\newcommand{\set}{\@ifstar{\setb}{\seti}}
\newcommand{\seti}[1]{\{#1\}}
\newcommand{\setb}[1]{\left\{ #1\right\}}
\newcommand{\td}[1]{\widetilde{#1}}
\newcommand{\br}[1]{\overline{#1}}
\newcommand{\ul}[1]{\underline{#1}}
\newcommand{\wh}[1]{\widehat{#1}}
\newcommand{\VERT}[1]{{\left\vert\kern-0.25ex\left\vert\kern-0.25ex\left\vert #1 
    \right\vert\kern-0.25ex\right\vert\kern-0.25ex\right\vert}}
\let\Re\relax
\DeclareMathOperator{\Re}{Re}
\let\Im\relax
\DeclareMathOperator{\Im}{Im}
\newcommand{\rd}{\partial}
\newcommand{\peq}{\relphantom{=}}			
\newcommand{\gmm}{\gamma}
\newcommand{\eps}{\epsilon}
\newcommand{\lmb}{\lambda}
\newcommand{\tht}{\theta}
\newcommand{\bfD}{{\bf D}}
\newcommand{\bbC}{\mathbb C}
\newcommand{\bbN}{\mathbb N}
\newcommand{\bbQ}{\mathbb Q}
\newcommand{\bbR}{\mathbb R}
\newcommand{\bbZ}{\mathbb Z}
\newcommand{\calC}{\mathcal C}
\newcommand{\calE}{\mathcal E}
\newcommand{\calF}{\mathcal F}
\newcommand{\calH}{\mathcal H}
\newcommand{\calL}{\mathcal L}
\newcommand{\calM}{\mathcal M}
\newcommand{\calN}{\mathcal N}
\newcommand{\calZ}{\mathcal Z}
\newcommand{\frkm}{\mathfrak m}
\newcommand{\To}{\longrightarrow}
\newcommand{\weakto}{\rightharpoonup}
\newcommand{\embed}{\hookrightarrow}
\newcommand{\CR}{\bfD_{+}}
\newcommand{\chf}{\boldsymbol{1}}
\newcommand{\tint}[2]{\textstyle \int_{#1}^{#2}}
\newcommand{\tsum}[2]{\textstyle \sum_{#1}^{#2}}
\providecommand{\corollaryname}{Corollary}
\providecommand{\lemmaname}{Lemma}
\providecommand{\propositionname}{Proposition}
\providecommand{\remarkname}{Remark}
\providecommand{\theoremname}{Theorem}
\begin{document}
\global\long\def\bbC{\mathbb{C}}%
\global\long\def\bbN{\mathbb{N}}%
\global\long\def\bbQ{\mathbb{Q}}%
\global\long\def\bbR{\mathbb{R}}%
\global\long\def\bbZ{\mathbb{Z}}%

\global\long\def\bfD{{\bf D}}%

\global\long\def\calC{\mathcal{C}}%
\global\long\def\calE{\mathcal{E}}%
\global\long\def\calF{\mathcal{F}}%
\global\long\def\calH{\mathcal{H}}%
\global\long\def\calL{\mathcal{L}}%
\global\long\def\calM{\mathcal{M}}%
\global\long\def\calN{\mathcal{N}}%
\global\long\def\calZ{\mathcal{Z}}%

\global\long\def\frkm{\mathfrak{m}}%

\global\long\def\eps{\epsilon}%
\global\long\def\lmb{\lambda}%
\global\long\def\gmm{\gamma}%
\global\long\def\tht{\theta}%

\global\long\def\rd{\partial}%

\global\long\def\peq{\mathrel{\phantom{=}}}%
\global\long\def\To{\longrightarrow}%
\global\long\def\weakto{\rightharpoonup}%
\global\long\def\embed{\hookrightarrow}%
\global\long\def\Re{\mathrm{Re}}%
\global\long\def\Im{\mathrm{Im}}%
\global\long\def\chf{\mathbf{1}}%
\global\long\def\td#1{\widetilde{#1}}%
\global\long\def\br#1{\overline{#1}}%
\global\long\def\ul#1{\underline{#1}}%
\global\long\def\wh#1{\widehat{#1}}%
\global\long\def\tint#1#2{{\textstyle \int_{#1}^{#2}}}%
\global\long\def\tsum#1#2{{\textstyle \sum_{#1}^{#2}}}%
\global\long\def\CR{\mathbf{D}_{+}}%

\title[Soliton resolution for CSS]{Soliton resolution for equivariant self-dual Chern--Simons--Schrödinger
equation in weighted Sobolev class}
\author{Kihyun Kim}
\email{khyun@ihes.fr}
\address{IHES, 35 route de Chartres, Bures-sur-Yvette 91440, France}
\author{Soonsik Kwon}
\email{soonsikk@kaist.edu}
\address{Department of Mathematical Sciences, Korea Advanced Institute of Science
and Technology, 291 Daehak-ro, Yuseong-gu, Daejeon 34141, Korea}
\author{Sung-Jin Oh}
\email{sjoh@math.berkeley.edu}
\address{Department of Mathematics, UC Berkeley, Evans Hall 970, Berkeley,
CA 94720-3840, USA and Korea Institue for Advanced Study, 80 Hoegi-ro,
Dongdaemun-gu, Seoul 02455, Korea}
\keywords{soliton resolution, Chern--Simons--Schrödinger equation, self-duality}
\subjclass[2010]{35B40, 35Q55, 37K40}
\begin{abstract}
We consider the self-dual Chern--Simons--Schrödinger equation (CSS)
under equivariant symmetry, which is a $L^{2}$-critical equation.
It is known that (CSS) admits solitons and finite-time blow-up solutions.
In this paper, we show \emph{soliton resolution} for any solutions
with equivariant data in the weighted Sobolev space $H^{1,1}$: every
maximal solution decomposes into at most one modulated soliton and
a radiation. A striking fact is that the nonscattering part must be
a single modulated soliton. To our knowledge, this is the first result
on soliton resolution in a class of nonlinear Schrödinger equations
which are not known to be completely integrable. The key ingredient
is the defocusing nature of the equation in the exterior of a soliton
profile. This is a consequence of two distinctive features of (CSS):
self-duality and non-local nonlinearity.
\end{abstract}

\maketitle
\tableofcontents{}

\section{Introduction}

We study the long time dynamics of the self-dual Chern--Simons--Schrödinger
equation \eqref{eq:CSS-m-equiv} under equivariant symmetry. Our main
result (Theorem \ref{thm:asymptotic-description}) is soliton resolution
of solutions in the weighted Sobolev space $H^{1,1}$. Moreover, in
the case of finite-time blow-up, our proof works for all finite energy
solutions.

The \emph{self-dual Chern--Simons--Schrödinger} equation within
\emph{$m$-equivariance} is

\begin{equation}
i(\rd_{t}+iA_{t}[u])u+\rd_{r}^{2}u+\frac{1}{r}\rd_{r}u-\Big(\frac{m+A_{\theta}[u]}{r}\Big)^{2}u+|u|^{2}u=0,\tag{CSS}\label{eq:CSS-m-equiv}
\end{equation}
where $m\in\bbZ$ (called \emph{equivariance index}), and the connection
components $A_{t}[u]$ and $A_{\tht}[u]$ are given by 
\begin{equation}
A_{t}[u]=-\int_{r}^{\infty}(m+A_{\tht}[u])|u|^{2}\frac{dr'}{r'},\qquad A_{\tht}[u]=-\frac{1}{2}\int_{0}^{r}|u|^{2}r'dr'.\label{eq:def-A}
\end{equation}
The Chern--Simons--Schrödinger equation was introduced by Jackiw--Pi
\cite{JackiwPi1990PRL} as a nonrelativistic planar quantum electromagnetic
model that exhibits \emph{self-duality} (to be discussed more below).
It is a gauge-covariant cubic nonlinear Schrödinger equation on $\bbR^{2}$.
We refer to \cite{JackiwPi1990PRL,JackiwPi1990PRD,JackiwPi1991PRD,JackiwPi1992Progr.Theoret.,Dunne1995Springer}
for more physical backgrounds. The model \eqref{eq:CSS-m-equiv} is
derived after fixing the Coulomb gauge condition and imposing the
equivariant symmetry on the scalar field $\phi$: 
\[
\phi(t,x)=u(t,r)e^{im\theta},
\]
where $(r,\theta)$ are the polar coordinates on $\bbR^{2}$. For
more details on this reduction, we refer to the introduction of \cite{KimKwon2019arXiv,KimKwon2020arXiv,KimKwonOh2020arXiv}.

\eqref{eq:CSS-m-equiv} enjoys various symmetries and conservation
laws. Among the most basic symmetries are the time translation and
the phase rotation symmetries. Associated to these are the conservation
laws for the \emph{energy} and the \emph{mass} (the physical interpretation
of the quantity $M[u]$ is the \emph{total charge}, but in this paper
we shall call it mass following the widespread convention for NLS):
\begin{align}
E[u] & \coloneqq\int\frac{1}{2}|\partial_{r}u|^{2}+\frac{1}{2}\Big(\frac{m+A_{\theta}[u]}{r}\Big)^{2}|u|^{2}-\frac{1}{4}|u|^{2},\label{eq:energy-Coulomb-form}\\
M[u] & \coloneqq\int|u|^{2},\label{eq:charge}
\end{align}
where we denoted $\int f(r)=2\pi\int f(r)rdr$. With this energy functional,
\eqref{eq:CSS-m-equiv} admits a \emph{Hamiltonian} structure
\[
\rd_{t}u=-i\nabla E[u],
\]
where $\nabla$ (acting on a functional) is the Fréchet derivative
with respect to the real inner product $\int\Re(\br uv)$. Of particular
importance in this work are the \emph{$L^{2}$-scaling symmetry} and
the \emph{pseudoconformal symmetry};\emph{ }if $u(t,r)$ is a solution
to \eqref{eq:CSS-m-equiv}, then the functions $u_{\lmb}$ and $\calC u$
also solve \eqref{eq:CSS-m-equiv}: 
\begin{align}
u_{\lmb}(t,r) & \coloneqq\frac{1}{\lmb}u\Big(\frac{t}{\lmb^{2}},\frac{r}{\lmb}\Big),\qquad\qquad\forall\lmb>0,\label{eq:scaling}\\{}
[\calC u](t,r) & \coloneqq\frac{1}{|t|}u(-\frac{1}{t},\frac{r}{|t|})e^{\frac{ir^{2}}{4t}},\qquad\forall t\neq0.\label{eq:def-pseudoconf}
\end{align}
Associated to \eqref{eq:scaling} and \eqref{eq:def-pseudoconf} are
the \emph{virial identities}:\emph{ }
\begin{align}
\rd_{t}\int r^{2}|u|^{2} & =4\int\Im(\br u\cdot r\rd_{r}u),\label{eq:virial-1}\\
\rd_{t}\int\Im(\br u\cdot r\rd_{r}u) & =4E[u].\label{eq:virial-2}
\end{align}
In this aspect, \eqref{eq:CSS-m-equiv} shares many similarities with
the cubic NLS 
\begin{equation}
i\partial_{t}\psi+\Delta\psi+|\psi|^{2}\psi=0\quad\text{on }\bbR^{1+2}.\tag{NLS}\label{eq:NLS}
\end{equation}

A notable feature of \eqref{eq:CSS-m-equiv} in comparison to NLS
is the \emph{self-duality}. Indeed, the energy functional can be written
in the self-dual form 
\begin{equation}
E[u]=\int\frac{1}{2}|\bfD_{u}u|^{2},\label{eq:energy-self-dual-form}
\end{equation}
where $\bfD_{u}$ is the (covariant) \emph{Cauchy--Riemann operator}
defined by 
\begin{equation}
\bfD_{u}f\coloneqq\rd_{r}f-\frac{m+A_{\theta}[u]}{r}f.\label{eq:CR-radial}
\end{equation}
We call the operator $u\mapsto\bfD_{u}u$ the \emph{Bogomol'nyi operator}.
Due to \eqref{eq:energy-self-dual-form} and the Hamitonian structure,
any static solutions to \eqref{eq:CSS-m-equiv} are given by solutions
to the \emph{Bogomol'nyi equation}:
\begin{equation}
\bfD_{Q}Q=0.\label{eq:Bogomol'nyi-eq}
\end{equation}
For $m\geq0$, there is an \emph{explicit} $m$-equivariant static
solution (Jackiw--Pi vortex) to the Bogomol'nyi equation which is
unique up to the symmetries of the equation \cite{JackiwPi1990PRD}:
\begin{equation}
Q(r)=\sqrt{8}(m+1)\frac{r^{m}}{1+r^{2m+2}},\qquad m\geq0.\label{eq:Q-formula}
\end{equation}
Note that we suppressed the $m$-dependences in $\bfD_{u}$ and $Q$
for the simplicity of notation. Moreover, applying the pseudoconformal
transform \eqref{eq:def-pseudoconf} to $Q$, we obtain an explicit
finite-time blow-up solution: 
\[
S(t,r)\coloneqq\frac{1}{|t|}Q\Big(\frac{r}{|t|}\Big)e^{-i\frac{r^{2}}{4|t|}},\qquad t<0.
\]
We note that $S(t)$ has finite energy if and only if $m\geq1$.

Let us briefly discuss some known results on the covariant Chern--Simons--Schrödinger
equation without symmetry. The local well-posedness has been studied
by many authors: \cite{BergeDeBouardSaut1995Nonlinearity,Huh2013Abstr.Appl.Anal,LiuSmithTataru2014IMRN,Lim2018JDE}.
However, the best known result by Liu--Smith--Tataru \cite{LiuSmithTataru2014IMRN}
still misses the critical $L^{2}$-space. There are also results on
the long-term dynamics \cite{BergeDeBouardSaut1995Nonlinearity,BergeDeBouardSaut1995PRL,OhPusateri2015}.

If one restricts to the equivariant self-dual Chern--Simons--Schrödinger
equation, i.e., \eqref{eq:CSS-m-equiv}, then much more is known.
First of all, as there is no derivative nonlinearity, \eqref{eq:CSS-m-equiv}
is well-posed in $L^{2}$ \cite[Section 2]{LiuSmith2016}. The global-in-time
large data dynamics are partially known. Here, the ground state $Q$
provides a natural threshold for the nonscattering dynamics. Indeed,
Liu--Smith \cite{LiuSmith2016} proved the following \emph{subthreshold
theorem}: for $m\geq0$, any $m$-equivariant $L^{2}$-solutions $u$
with $M[u]<M[Q]$ scatter both forwards and backwards in time. At
the threshold mass $M[u]=M[Q]$ (necessarily $m\geq0$), there are
two typical examples of non-scattering solutions: $Q$ and $S(t)$.
These are indeed the only examples in the energy space due to the
classification result of Li--Liu \cite{LiLiu2020arXiv} ($S(t)$
for the radial case $m=0$ is an exception because it does not have
finite energy due to the slow spatial decay of $Q$). Above the threshold,
 \cite{KimKwon2019arXiv,KimKwon2020arXiv,KimKwonOh2020arXiv,KimKwonOh2022arXiv2}
provide a variety of finite-time blow-up solutions (and global-in-time
nonscattering solutions) with quantitative descriptions of the dynamics
near the blow-up time.

It is widely believed that, for arbitrary large data, the maximal
solutions asymptotically decompose into the sum of decoupled solitons
and a radiation. This is referred to as the \emph{soliton resolution
conjecture}. This has been known for a wide range of completely integrable
equations, but the focus of the present paper is on soliton resolution
for (possibly)\emph{ non-integrable} models without exploiting complete
integrability techniques. Recently, the remarkable works \cite{DuyckaertsKenigMerle2013CambJMath,DuyckaertsKenigMere2019arXiv1,DuyckaertsKenigMartelMerle2021arXiv,CollotDuyckaertsKenigMere2022arXiv,JendrejLawrie2021arXiv}
established soliton resolution for the radial critical nonlinear wave
equation (in various dimensions) and energy-critical equivariant wave
maps. However, to our knowledge, there is no earlier result for non-integrable
Schrödinger type equations.

The main result of this paper\emph{ }is\emph{ the proof of soliton
resolution for the equivariant self-dual Chern--Simons--Schrödinger
equation in a suitable weighted Sobolev class}. Our proof is based
on a remarkable consequence of the non-local nonlinearity and the
self-duality of \eqref{eq:CSS-m-equiv}, namely, the defocusing nature
of the equation in the exterior of a soliton profile. This property
also results in a strong rigidity of the dynamics of \eqref{eq:CSS-m-equiv}:
the non-existence of multi-soliton configurations separated by scales.
See the remarks following Theorem \ref{thm:asymptotic-description}.

We are now ready to state the result. Let us denote the modulated
soliton by 
\[
Q_{\lambda,\gamma}(r)\coloneqq\frac{e^{i\gamma}}{\lambda}Q\Big(\frac{r}{\lambda}\Big),\qquad\lmb\in(0,\infty),\ \gmm\in\bbR/2\pi\bbZ.
\]
We also denote by $H_{m}^{1,1}$ and $H_{m}^{1}$ the (weighted) Sobolev
spaces $H^{1,1}$ and $H^{1}$ restricted to $m$-equivariant functions,
equipped with the inherited norms. We denote by $\Delta^{(m)}=\rd_{rr}+\tfrac{1}{r}\rd_{r}-\tfrac{m^{2}}{r^{2}}$
the Laplacian acting on $m$-equivariant functions.
\begin{thm}[Soliton resolution for equivariant $H^{1,1}$-data]
\label{thm:asymptotic-description}Let $m\in\bbZ$. When $m\geq0$,
we have soliton resolution for $H_{m}^{1,1}$-solutions:
\begin{itemize}
\item (Finite-time blow-up solutions) If $u$ is a $H_{m}^{1}$-solution
to \eqref{eq:CSS-m-equiv} that blows up forwards in time at $T<+\infty$,
then $u(t)$ admits the decomposition 
\begin{equation}
u(t,\cdot)-Q_{\lambda(t),\gamma(t)}\to z^{\ast}\text{ in }L^{2}\text{ as }t\to T^{-},\label{eq:thm1-decomp}
\end{equation}
for some continuous $\lambda(t)\in(0,\infty)$ and $\gamma(t)\in\bbR/2\pi\bbZ$,
and $z^{\ast}\in L^{2}$ with the following properties:
\begin{itemize}
\item (Further regularity of $z^{\ast}$) We have $\partial_{r}z^{\ast},\frac{1}{r}z^{\ast}\in L^{2}$.
Moreover, if $u$ is a $H_{m}^{1,1}$ finite-time blow-up solution,
then we also have $rz^{\ast}\in L^{2}$.
\item (Bound on the blow-up speed) As $t\to T$, we have 
\begin{equation}
\lambda(t)\lesssim_{M[u]}\sqrt{E[u]}(T-t).\label{eq:thm1-lmb-upper-bound}
\end{equation}
When $m=0$, we further have the improved bound as $t\to T$
\begin{equation}
\lambda(t)\lesssim_{M[u]}\frac{\sqrt{E[u]}(T-t)}{|\log(T-t)|^{\frac{1}{2}}}.\label{eq:thm1-lmb-upper-bound-radial}
\end{equation}
\end{itemize}
\item (Global solutions) If $u$ is a $H_{m}^{1,1}$-solution to \eqref{eq:CSS-m-equiv}
that exists globally forwards in time, then either $u(t)$ scatters
forwards in time, or $u(t)$ admits the decomposition 
\begin{equation}
u(t,\cdot)-Q_{\lambda(t),\gamma(t)}-e^{it\Delta^{(-m-2)}}u^{\ast}\to0\text{ in }L^{2}\text{ as }t\to+\infty,\label{eq:thm-global-decomp}
\end{equation}
for some continuous $\lambda(t)\in(0,\infty)$ and $\gamma(t)\in\bbR/2\pi\bbZ$,
and $u^{\ast}\in L^{2}$ with the following properties:
\begin{itemize}
\item (Further regularity of $u^{\ast}$) We have $\partial_{r}u^{\ast},\frac{1}{r}u^{\ast},ru^{\ast}\in L^{2}$.
\item (Bound on the scale) As $t\to+\infty$, we have 
\begin{equation}
\lmb(t)\lesssim_{M[u]}\sqrt{E[\calC u]},\label{eq:thm1-lmb-global-bound}
\end{equation}
where $\calC u$ is the pseudoconformal transform \eqref{eq:def-pseudoconf}
of $u$. When $m=0$, we further have as $t\to+\infty$
\begin{equation}
\lmb(t)\lesssim_{M[u]}\frac{\sqrt{E[\calC u]}}{|\log t|^{\frac{1}{2}}}.\label{eq:thm1-lmb-global-bound-radial}
\end{equation}
\end{itemize}
\end{itemize}
On the other hand, when $m<0$, any $H_{m}^{1,1}$-solution to \eqref{eq:CSS-m-equiv}
scatters forwards in time. Due to the time-reversal symmetry, all
the above statements also hold for backward-in-time evolutions.
\end{thm}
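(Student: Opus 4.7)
The plan is to combine four ingredients: the Bogomol'nyi/self-dual factorization of the energy $E[u]=\tfrac{1}{2}\int|\bfD_u u|^2$, the pseudoconformal symmetry $\calC$ from \eqref{eq:def-pseudoconf} to interchange finite-time blow-up with the large-time regime, a modulation-theoretic decomposition around the ground state $Q$, and a monotonicity estimate exploiting the defocusing nature of the equation outside the soliton. Since $\calC$ preserves $H_{m}^{1,1}$ and sends $t\to T^{-}$ to $t\to+\infty$, I would first reduce the two halves of the theorem to a single ``global-forward'' problem, in which the regularity conditions on $z^{\ast}$ and $u^{\ast}$ (in particular, the weight $r z^{\ast}, r u^{\ast}\in L^{2}$) become manifestly the same.

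Next I would run modulation theory. Assuming $u$ does not scatter, I would introduce $\lmb(t)\in(0,\infty)$ and $\gmm(t)\in\bbR/2\pi\bbZ$ via two real orthogonality conditions imposed on the remainder $w(t)\coloneqq u(t)-Q_{\lmb(t),\gmm(t)}$ against the infinitesimal generators of scaling and phase symmetry at $Q$. The self-dual form of $E$ combined with the Bogomol'nyi equation $\bfD_{Q}Q=0$ should then yield a coercive quadratic form in $w$ through the linearized Bogomol'nyi operator, bypassing the delicate spectral analysis usually required near a ground state. Together with the virial identities \eqref{eq:virial-1}--\eqref{eq:virial-2}, this provides the dynamical framework for the modulation parameters and the remainder.

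The heart of the argument is a cutoff virial/Morawetz-type estimate for $w$ on the exterior region $r\gg\lmb(t)$. Testing the equation for $w$ against a weight supported where the soliton profile is negligible, I would show that the resulting time derivative carries a favorable sign, modulo boundary errors controlled through modulation and mass conservation. Here the advertised observation that \eqref{eq:CSS-m-equiv} is defocusing in the exterior of a soliton profile, coming from the interaction of self-duality with the nonlocal connection components \eqref{eq:def-A}, is what makes the sign computation close. From this monotonicity I would conclude both that no further bubble can coexist at any other scale (the striking single-soliton rigidity) and that $w(t)-e^{it\Delta^{(-m-2)}}u^{\ast}\to 0$ in $L^{2}$ for some profile $u^{\ast}$. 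The regularity $\rd_{r} u^{\ast},\tfrac{1}{r}u^{\ast},r u^{\ast}\in L^{2}$ should then follow by propagating the conserved pseudoconformal energy $E[\calC u]$ through the smoothness and decay of $Q$, while the scale bounds \eqref{eq:thm1-lmb-upper-bound}--\eqref{eq:thm1-lmb-global-bound-radial} are obtained by integrating the modulation equation once the mass balance between soliton and radiation is in hand; the logarithmic improvement at $m=0$ should reflect the slow $r^{-2}$-decay of the Jackiw--Pi vortex \eqref{eq:Q-formula}. For $m<0$ there is no equivariant Bogomol'nyi solution, and the same monotonicity collapses into a pure scattering statement.

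The hard part will be making the exterior virial quantitatively useful. The connection components $A_{t}[u]$ and $A_{\tht}[u]$ in \eqref{eq:def-A} are nonlocal in $r$, so the sign structure of the virial for $w$ must survive both after the soliton contribution is subtracted and after one accounts for the boundary errors near $r\sim\lmb(t)$, where the modulation-driven mass transfer is concentrated. The ``at most one soliton'' conclusion ultimately depends on this estimate being robust enough to dominate the modulation errors uniformly in time, and this precisely captures the delicate interplay between self-duality and non-locality singled out as the key new mechanism of the paper.
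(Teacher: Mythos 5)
Your high-level architecture (pseudoconformal reduction, modulation around $Q$, self-duality as the driving mechanism, exterior defocusing) matches the paper's, but there are two genuine gaps in the middle of the argument, and the central estimate you propose is not the one the paper uses.

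First, you cannot ``run modulation theory'' straight away. Modulation requires \emph{a priori} proximity of $u(t)$ to the family $\{Q_{\lmb,\gmm}\}$ in some topology, and here $L^{2}$-proximity is \emph{not} available because the mass may be arbitrarily larger than $M[Q]$. The paper must first establish proximity in $\dot{\calH}_{m}^{1}$ (Lemma \ref{lem:orbital-stability-small-energy}) by a compactness/variational argument: renormalize $u(t)$ to fixed $\dot H^{1}$-size, observe that the energy of the renormalized profile tends to $0$, extract a weak limit $w_{\infty}\ne 0$, and use the uniqueness of zero-energy solutions to the Bogomol'nyi equation to conclude $w_{\infty}=Q_{\lmb,\gmm}$, with strong $\dot{\calH}_{m}^{1}$ convergence coming from a subcoercivity estimate. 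You never address how to get this first foothold, and without it the orthogonality conditions you want to impose on $w$ are not even well-defined.

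Second, the ``heart'' of your proposal is a cutoff virial/Morawetz estimate on the exterior, but the paper's defocusing-in-the-exterior observation is deployed as a \emph{static} estimate, not a dynamical monotonicity: it is the nonlinear coercivity of energy (Lemma \ref{lem:nonlinear-coercivity}),
\[
E[Q+\eps]\sim_{M}\|\eps\|_{\dot{\calH}_{m}^{1}}^{2},
\]
valid even when $\|\eps\|_{L^{2}}$ is large, whose proof rests on a nonlinear Hardy inequality \eqref{eq:HardyClaim} for the operator $\bfD_{Q}-\tfrac{A_{\tht}[\eps]}{r}$ on $\{r\gtrsim R\}$. This is precisely what lets the (linear, one-derivative) modulation estimate $|\lmb_{s}/\lmb|\lesssim\|\eps\|_{\dot{\calH}_{m}^{1}}$ be converted into $|\lmb_{t}|\lesssim_{M}\sqrt{E[u]}$ and hence into the blow-up bound \eqref{eq:thm1-lmb-upper-bound}. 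Your coercivity remark (``coercive quadratic form through the linearized Bogomol'nyi operator'') is the linear coercivity, which is insufficient because the higher-order terms in $E[Q+\eps]$ are not perturbative for large $\|\eps\|_{L^2}$. Moreover, the existence of the asymptotic profile is obtained in the paper not from any monotonicity formula but from an outer $L^{2}$-Cauchy sequence argument (Lemma \ref{lem:thm1-outer-L2-conv}) using a standard $L^2$-energy estimate for $\varphi_{R}(u(t+\tau)-u(t))$, together with weak $H^1_m$-compactness; a Morawetz-to-scattering upgrade would need substantially more machinery (e.g., profile decomposition) which you do not supply.

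Finally, the $m=0$ logarithmic gain is more delicate than ``slow decay of $Q$''. The paper pairs the $\eps$-equation with the generalized kernel element $y^{2}Q\chi_{R(t)}$ with a polynomially shrinking cutoff $R(t)=(T-t)^{-\delta}$ and exploits the \emph{mismatch} between $(\Lambda Q,y^{2}Q\chi_{R})_{r}\sim\log R$ and $\|yQ\chi_{R}\|_{L^{2}}\sim\sqrt{\log R}$ after absorbing $\rd_{s}(\eps,y^{2}Q\chi_{R})_{r}$ into a total derivative; this exact algebra is where the extra $|\log(T-t)|^{-1/2}$ comes from, and it does not follow merely from the decay rate of $Q$.
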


We note that one can further choose smooth modulation parameters in
Theorem~\ref{thm:asymptotic-description} because the theorem is
invariant under replacing $\lmb(t)$ by any function $\td{\lmb}(t)$
with $\td{\lmb}(t)/\lmb(t)\to1$ (and similarly for $\gmm(t)$).
\begin{rem}[The dynamics for $m\geq0$ and $m<0$]
The dynamics of \eqref{eq:CSS-m-equiv} for $m\geq0$ and $m<0$
are completely different. In fact, we will show that \eqref{eq:CSS-m-equiv}
for $m<0$ is \emph{defocusing} in the sense that 
\begin{equation}
E[u]\sim_{M[u]}\|u(t)\|_{\dot{H}_{m}^{1}}^{2}\label{eq:intro-defocusing-neg-equiv}
\end{equation}
and hence there are no nontrivial Jackiw--Pi vortices for $m<0$.
See Lemma \ref{lem:Nonlin-coer-neg-equiv} for the proof.
\end{rem}

\begin{rem}[Nonexistence of multi-solitons]
\label{rem:Nonexistence-multisoliton}It is remarkable that at most
one soliton can appear in the resolution. This is a distinctive feature
of \eqref{eq:CSS-m-equiv}. Indeed, as a consequence of the self-duality
and non-locality, we observe a \emph{defocusing }nature, i.e., the
strict positivity of the energy, of \eqref{eq:CSS-m-equiv} at the
exterior of a soliton profile. Hence two solitons at different scales
cannot exist simultaneously. We will obtain this defocusing nature
by combining our two observations: (i) \eqref{eq:CSS-m-equiv} at
the exterior of soliton resembles \eqref{eq:CSS-m-equiv} for $m<0$
(observed in \cite{KimKwon2019arXiv}) and (ii) the defocusing nature
\eqref{eq:intro-defocusing-neg-equiv} when $m<0$. See Lemma \ref{lem:Nonlin-coer-neg-equiv}
for the proof.

Even without equivariant symmetry, by essentially the same mechanism,
we expect that there is no bubble tree (i.e., a multi-soliton separated
\emph{only} by scales) for the self-dual Chern--Simons--Schrödinger
equation. However, multi-solitons separated by spatial distances may
exist.
\end{rem}

\begin{rem}[Regularity assumptions on data]
As seen in the above, we cover all $H_{m}^{1}$ finite-time blow-up
solutions. For global solutions, we will reduce the situation to the
$H_{m}^{1}$ finite-time blow-up case in the spirit of the pseudoconformal
transform, which requires the $H_{m}^{1,1}$-assumption. Note that
$E[\calC u]$ is well-defined for $H_{m}^{1,1}$-solutions $u$. Soliton
resolution for any global $H_{m}^{1}$-solutions (or, more ambitiously
$L_{m}^{2}$-solutions) is an interesting open problem.
\end{rem}

\begin{rem}[Equivariance index on the scattering part]
\label{rem:scattering-diff-equiv}We choose to state the scattering
for the radiative part of \eqref{eq:thm-global-decomp} under the
$(-m-2)$-equivariant free Schrödinger flow, because the scattering
part $u(t)-Q_{\lmb(t),\gmm(t)}$ approximately solves $(-m-2)$-equivariant
\eqref{eq:CSS-m-equiv}. This fact is already observed in \cite{KimKwon2019arXiv}.

However, the equivariance index for the scattering part of \eqref{eq:thm-global-decomp}
is irrelevant if one only considers the scattering in $L^{2}$-norm.
Indeed, for any $m,k\in\bbZ$ and radial functions $u^{\ast},v^{\ast}\in L^{2}$
(we equip $L^{2}$ with the $rdr$-measure), we have 
\[
\|e^{it\Delta^{(m)}}u^{\ast}-e^{it\Delta^{(k)}}v^{\ast}\|_{L^{2}}\to0\qquad\text{as }t\to+\infty
\]
if $u^{\ast}$ and $v^{\ast}$ satisfy the relation
\[
u^{\ast}=\calF_{m}^{-1}\calF_{k}v^{\ast},
\]
where $\calF_{m}$ is the rescaled version of the Hankel transform
of order $m$: 
\[
[\calF_{m}f](\rho)=\frac{i^{m-1}}{2}\int_{0}^{\infty}f(r)J_{m}(\frac{r\rho}{2})rdr
\]
with $J_{m}$ Bessel function of the first kind of order $m$. Note
that the above can be verified using the pseudoconformal transform
and the identity $e^{it\Delta^{(m)}}u^{\ast}=[\mathcal{C}e^{i(\cdot)\Delta^{(m)}}\calF_{m}u^{\ast}](t)$
for $t>0$, for any $m\in\bbZ$. Since $\calF_{m}^{-1}\calF_{k}$
is unitary in $L^{2}$, the $L^{2}$-scattering is independent of
equivariance indices. However, the scattering with different equivariance
indices might not be equivalent under topologies other than $L^{2}$.
For example, the properties $\rd_{r}u^{\ast},\frac{1}{r}u^{\ast},ru^{\ast}\in L^{2}$
(as stated in Theorem \ref{thm:asymptotic-description}) may not be
preserved under changes of equivariance indices, i.e., under $\calF_{m}^{-1}\calF_{k}$.
\end{rem}

\begin{rem}[Bounds for scaling parameter]
\label{rem:rem1.4}When $m\geq1$, the explicit blow-up solution
$S(t)$ and the pseudoconformal blow-up solutions constructed in \cite{KimKwon2019arXiv,KimKwon2020arXiv}
are finite-energy finite-time blow-up solutions that saturate the
bound \eqref{eq:thm1-lmb-upper-bound}. Similarly, the soliton $Q$
itself saturates \eqref{eq:thm1-lmb-global-bound}.

When $m=0$, the blow-up solution $S(t)$ and the soliton $Q$ do
not satisfy the bounds \eqref{eq:thm1-lmb-upper-bound-radial} and
\eqref{eq:thm1-lmb-global-bound-radial}, respectively. This is consistent
with Theorem \ref{thm:asymptotic-description} because $Q$ does not
belong to $H_{0}^{1,1}$ and the explicit blow-up solution $S(t)$
does not have finite energy, and hence $Q$ and $S(t)$ are not covered
by our theorem. Note that \eqref{eq:thm1-lmb-global-bound-radial}
says that any global-in-time nonscattering $H_{0}^{1,1}$-solution
must blow up in infinite time. On the other hand, the authors \cite{KimKwonOh2020arXiv,KimKwonOh2022arXiv2}
construct finite energy finite-time blow-up solutions with the speed
$\lambda(t)\sim(T-t)|\log(T-t)|^{-2}$ and $\lmb(t)\sim(T-t)^{p}|\log(T-t)|^{-1}$
for all $p>1$, respectively. However, we do not know whether the
upper bound \eqref{eq:thm1-lmb-upper-bound-radial} is sharp or not.

The bounds \eqref{eq:thm1-lmb-upper-bound}-\eqref{eq:thm1-lmb-upper-bound-radial}
and \eqref{eq:thm1-lmb-global-bound}-\eqref{eq:thm1-lmb-global-bound-radial}
may have to be relaxed for more general class of solutions. Note that
such a relaxation is necessary for $m=0$ by the concrete examples
$S(t)$ and $Q$.
\end{rem}

\begin{rem}[On the phase rotation parameter]
The phase rotation parameter does not necessarily stabilize as $t\to T$
(or $t\to+\infty$). Indeed, the finite-time blow-up solutions constructed
in \cite{KimKwonOh2022arXiv2} for the $m=0$ case exhibit infinite
amount of phase rotations. The $m\geq1$ case is open.
\end{rem}

\begin{rem}[Comparison with \eqref{eq:NLS}]
\label{rem:ComparisonNLS}For the finite-time blow-up case, there
are similar results \cite{Raphael2005MathAnn,MerleRaphael2005CMP}
in \eqref{eq:NLS} for solutions having slightly supercritical mass
(i.e., $M[u]-M[Q]\ll1$). Under this assumption, a standard variational
argument in the blow-up scenario ensures that solutions eventually
undergo the near-soliton dynamics in the \emph{$L^{2}$-topology}.
Note that in Theorem \ref{thm:asymptotic-description} we do not have
$L^{2}$-proximity to solitons.

For the near-soliton dynamics of \eqref{eq:NLS}, it is known from
\cite{Raphael2005MathAnn} that any finite energy finite-time blow-up
solutions satisfy either $\lambda(t)\sim((T-t)/\log|\log(T-t)|)^{\frac{1}{2}}$
or $\lambda(t)\lesssim(T-t)$. The former log-log rate essentially
arises from negative energy solutions, which is impossible for the
self-dual (CSS). It is expected that such log-log rates are possible
for the focusing non-self-dual (CSS) \cite{BergeDeBouardSaut1995PRL}.
\end{rem}

\noindent \mbox{\textbf{Strategy of the proof.} }We use the notation
in Section \ref{subsec:Notation}.

For $m<0$, we will prove the global coercivity of energy \eqref{eq:intro-defocusing-neg-equiv},
which renders \eqref{eq:CSS-m-equiv} essentially defocusing for $m<0$.
Thus the scattering for $H_{m}^{1,1}$-data follows from a classical
argument using the pseudoconformal transform, see e.g., \cite{Cazenave2003book}.

The interesting case is when $m\geq0$, where solitons do exist. By
the pseudoconformal transform, it suffices to prove the finite-time
blow-up case of Theorem \ref{thm:asymptotic-description}. \emph{Our
key input is the nonlinear coercivity of energy \eqref{eq:intro-nonlin-coer}
after extracting out the soliton profile, which holds for solutions
with possibly large mass}. As explained in Remark \ref{rem:Nonexistence-multisoliton},
this nonlinear coercivity is a consequence of the \emph{self-duality}
and \emph{non-locality}, which are distinctive features of \eqref{eq:CSS-m-equiv}.

\emph{1. Variational argument.} For a finite energy finite-time blow-up
solution $u(t)$, not necessarily close to the modulated soliton $Q_{\lmb,\gmm}$
in $L^{2}$, we work with the renormalized solution $v(t)$ (using
the $L^{2}$-scaling) near the blow-up time (say $T$) of $u$ to
have $\|v(t)\|_{\dot{H}_{m}^{1}}=\|Q\|_{\dot{H}_{m}^{1}}$ and $E[v(t)]\to0$.

In view of the \emph{uniqueness} of the zero energy solution (i.e.,
$E[w]=0$ if and only if $w=Q_{\lmb,\gmm}$ or $w=0$) and renormalization,
we expect that each $v(t)$ is close to $e^{i\gmm(t)}Q$. We remark
that the closeness of $v$ to $e^{i\gmm}Q$ cannot be measured in
$L^{2}$, because we do not assume that the mass of $v$ is close
to that of $Q$. In fact, we are able to show that $v$ is close to
$e^{i\gmm}Q$ in the $\dot{H}^{1}$-topology (Lemma \ref{lem:orbital-stability-small-energy}).
Therefore, we roughly have 
\begin{equation}
u(t)=[Q+\eps(t,\cdot)]_{\lmb(t),\gmm(t)}\quad\text{with}\quad\|\eps(t)\|_{\dot{H}^{1}}\to0\label{eq:intro-decomp}
\end{equation}
for some $\lmb(t)$ and $\gmm(t)$. We may fix the decomposition by
imposing suitable orthogonality conditions on $\eps$. We note that
$\|\eps(t)\|_{L^{2}}$ might be large. We also note that \eqref{eq:intro-decomp}
is a consequence of the uniqueness of zero-energy solutions to \eqref{eq:CSS-m-equiv};
one cannot expect \eqref{eq:intro-decomp} for \eqref{eq:NLS} for
arbitrary solutions with large mass.

\emph{2. Nonlinear coercivity of energy.} For the proof of Theorem
\ref{thm:asymptotic-description}, the qualitative information $\|\eps(t)\|_{\dot{H}^{1}}\to0$
is not sufficient. Our next crucial input is the following \emph{nonlinear
coercivity of the energy} (Lemma \ref{lem:nonlinear-coercivity}):
\begin{equation}
E[Q+\eps]\gtrsim_{\|\eps\|_{L^{2}}}\|\eps\|_{\dot{H}^{1}}^{2}\label{eq:intro-nonlin-coer}
\end{equation}
for $\eps$ satisfying the orthogonality conditions and $\|\eps\|_{\dot{H}^{1}}\ll1$.
Here, the point is that \emph{the coercivity holds even for $\|\eps(t)\|_{L^{2}}\gtrsim1$}.
If we were to have $L^{2}$-smallness $\|\eps(t)\|_{L^{2}}\ll1$,
then all the higher order terms of $E[Q+\eps]$ are perturbative and
\eqref{eq:intro-nonlin-coer} is merely a consequence of the linear
coercivity (around $Q$). When $\eps(t)$ has large $L^{2}$-norm,
the higher order terms of $E[Q+\eps]$ are no longer perturbative.
Instead, we have (using the \emph{self-duality} \eqref{eq:energy-self-dual-form})
\begin{align*}
E[Q+\eps] & =\frac{1}{2}\int|\bfD_{Q+\eps}(Q+\eps)|^{2}\\
 & \approx\frac{1}{2}\int|L_{Q}(\chi_{R}\eps)|^{2}+\Big|\Big(\rd_{r}-\frac{m+A_{\theta}[Q]+A_{\theta}[\eps]}{r}\Big)\big((1-\chi_{R})\eps\big)\Big|^{2},
\end{align*}
where $L_{Q}$ is the linearized Bogomol'nyi operator around $Q$
(see \eqref{eq:LinearizationBogomolnyi}). The interior term is simply
handled by a localized version of the linear coercivity for $L_{Q}$.
However, the exterior term contains non-perturbative higher order
terms like $|\frac{A_{\theta}[\eps]}{r}\eps|^{2}$. At this point,
we use the \emph{non-locality} of the problem, particularly the fact
that $m+A_{\theta}[Q]\approx-(m+2)$ is negative. Thus the exterior
term can be viewed as the energy of $\eps$ for the $-(m+2)$-equivariant
\eqref{eq:CSS-m-equiv}. Using the boundary condition $[(1-\chi_{R})\eps](R)=0$
and the fact that both $m+A_{\theta}[Q]$ and $A_{\theta}[\eps]$
are negative, we can prove unconditional coercivity \eqref{eq:HardyClaim}
for the exterior term (which we call nonlinear Hardy's inequality).
Note that this argument also shows the nonexistence of nontrivial
zero energy solutions to \eqref{eq:CSS-m-equiv} for negative equivariance
indices. As a result, the nonlinear coercivity of energy \eqref{eq:intro-nonlin-coer}
follows.

\emph{3. Bound on the blow-up rate. }The proof of \eqref{eq:thm1-lmb-upper-bound}
is standard and very similar to the pseudoconformal regime in Raphaël
\cite{Raphael2005MathAnn}\emph{. }Indeed, by a standard modulation
analysis, we obtain a modulation estimate in the renormalized spacetime
variables \eqref{eq:def-renormalized-var}: 
\[
\Big|\frac{\lmb_{s}}{\lmb}\Big|\lesssim\|\eps\|_{\dot{H}_{m}^{1}}.
\]
Then, thanks to the nonlinear coercivity \eqref{eq:intro-nonlin-coer}
of energy, we get 
\[
|\lmb_{t}|=\frac{1}{\lmb}\Big|\frac{\lmb_{s}}{\lmb}\Big|\lesssim\frac{1}{\lmb}\sqrt{E[Q+\eps]}=\sqrt{E[u]},
\]
whose integration yields the bound \eqref{eq:thm1-lmb-upper-bound}.

However, the proof of \eqref{eq:thm1-lmb-upper-bound-radial} for
$m=0$ is trickier. We will use the fact that $yQ\notin L^{2}$ (logarithmic
divergence). Indeed, motivated by the generalized nullspace relations
\eqref{eq:gen-kernel-rel} of the linearized operator $i\calL_{Q}$
(see \eqref{eq:grad-energy-lin-nonlin}), the time variation of $\lmb$
can be tracked by looking at the time evolution of the inner product
$(\eps,y^{2}Q)_{r}$; we roughly have an estimate of the form 
\[
\frac{\lmb_{s}}{\lmb}(\Lambda Q,y^{2}Q\chi_{R})_{r}-\rd_{s}(\eps,y^{2}Q\chi_{R})_{r}\approx-(i\calL_{Q}\eps,y^{2}Q\chi_{R})_{r}\lesssim\|\eps\|_{\dot{H}_{m}^{1}}\|yQ\chi_{R}\|_{L^{2}}.
\]
The point here is that we have different logarithmic divergences of
the quantities: 
\begin{align*}
(\Lambda Q,y^{2}Q\chi_{R})_{r} & \sim\log R,\\
\|yQ\chi_{R}\|_{L^{2}} & \sim\sqrt{\log R}.
\end{align*}
Next, we choose $R=R(t)$ which diverges polynomially so that $\log R\sim|\log(T-t)|$
and absorb $\rd_{s}(\eps,y^{2}Q\chi_{R})_{r}$ into a total derivative
to roughly have 
\[
\Big|\frac{\lmb_{s}}{\lmb}\Big|\lesssim\frac{1}{|\log(T-t)|^{\frac{1}{2}}}\|\eps\|_{\dot{H}^{1}}.
\]
Using \eqref{eq:intro-nonlin-coer} again, this implies \eqref{eq:thm1-lmb-upper-bound-radial}.

\emph{4. Existence of the asymptotic profile.} The existence of the
asymptotic profile $z^{\ast}$ as in \eqref{eq:thm1-decomp} as well
as its regularity can be proved in a very similar manner to Merle--Raphaël
\cite{MerleRaphael2005CMP}. To obtain $z^{\ast}$ as the strong $L^{2}$-limit
of $\eps^{\sharp}(t)$ as $t\to T^{-}$, we again take advantage of
the nonlinear coercivity of energy in the form $\|\eps^{\sharp}(t)\|_{H_{m}^{1}}\lesssim1$.
This means that $\eps^{\sharp}(t)$ (and hence $z^{\ast}$) is not
only controlled on the obvious soliton scale $r\lesssim\lmb$, but
also up to scale $r\lesssim1$.\vspace{5bp}

\noindent \mbox{\textbf{Organization of the paper.} }In Section
\ref{sec:Preliminaries}, we collect notation and preliminaries for
our analysis. In Section \ref{sec:Proof-of-Theorem-neg-equiv}, we
prove Theorem \ref{thm:asymptotic-description} for $m<0$. The heart
of this paper is contained in Section \ref{sec:Proof-of-Theorem-pos-equiv},
where we prove Theorem \ref{thm:asymptotic-description} for $m\geq0$.

\vspace{5bp}

\noindent \mbox{\textbf{Acknowledgements.} }K.~Kim is supported
by  Huawei Young Talents Programme at IHES. S.~Kwon is partially
supported by Samsung Science \& Technology Foundation under Project
Number BA1701-01, NRF-2019R1A5A1028324, and NRF-2018R1D1A1A0908335.
S.-J.~Oh is supported by the Samsung Science \& Technology Foundation
under Project Number BA1702-02, a Sloan Research Fellowship and a
NSF CAREER Grant DMS-1945615.

\section{\label{sec:Preliminaries}Preliminaries}

In this section, we collect notation and preliminary facts on linearization,
adapted function spaces, and duality estimates for \eqref{eq:CSS-m-equiv}.

\subsection{\label{subsec:Notation}Notation}

For $A\in\bbC$ and $B\geq0$, we use the standard asymptotic notation
$A\lesssim B$ or $A=O(B)$ if there is a constant $C>0$ such that
$|A|\leq CB$. $A\sim B$ means that $A\lesssim B$ and $B\lesssim A$.
The dependencies of $C$ are specified by subscripts, e.g., $A\lesssim_{E}B\Leftrightarrow A=O_{E}(B)\Leftrightarrow|A|\leq C(E)B$.
In this paper, any dependencies on the equivariance index $m$ will
be omitted.

We also use the notation $\langle x\rangle$, $\log_{+}x$, $\log_{-}x$
defined by 
\[
\langle x\rangle\coloneqq(|x|^{2}+1)^{\frac{1}{2}},\quad\log_{+}x\coloneqq\max\{\log x,0\},\quad\log_{-}x\coloneqq\max\{-\log x,0\}.
\]
We let $\chi=\chi(x)$ be a smooth spherically symmetric cutoff function
such that $\chi(x)=1$ for $|x|\leq1$ and $\chi(x)=0$ for $|x|\geq2$.
For $A>0$, we define its rescaled version by $\chi_{A}(x)\coloneqq\chi(x/A)$.

We mainly work with equivariant functions on $\bbR^{2}$, say $\phi:\bbR^{2}\to\bbC$,
or equivalently their radial part $u:\bbR_{+}\to\bbC$ with $\phi(x)=u(r)e^{im\theta}$,
where $\bbR_{+}\coloneqq(0,\infty)$ and $x_{1}+ix_{2}=re^{i\theta}$.
We denote by $\Delta^{(m)}=\rd_{rr}+\tfrac{1}{r}\rd_{r}-\tfrac{m^{2}}{r^{2}}$
the Laplacian acting on $m$-equivariant functions.

The integral symbol $\int$ means 
\[
\int=\int_{\bbR^{2}}\,dx=2\pi\int\,rdr.
\]
For complex-valued functions $f$ and $g$, we define their \emph{real}
inner product by 
\[
(f,g)_{r}\coloneqq\int\Re(\br fg).
\]
For a real-valued functional $F$ and a function $u$, we denote by
$\nabla F[u]$ the \emph{functional derivative} of $F$ at $u$ under
this real inner product.

We denote by $\Lambda$ the $L^{2}$-scaling generator: 
\[
\Lambda\coloneqq r\rd_{r}+1.
\]
Given a scaling parameter $\lmb\in\bbR_{+}$, phase rotation parameter
$\gmm\in\bbR/2\pi\bbZ$, and a function $f$, we write 
\[
f_{\lmb,\gmm}(r)\coloneqq\frac{e^{i\gmm}}{\lmb}f\Big(\frac{r}{\lmb}\Big).
\]
When $\lmb$ and $\gmm$ are clear from the context, we will also
denote the above by $f^{\sharp}$ as in \cite{KimKwon2019arXiv},
i.e., 
\[
f^{\sharp}\coloneqq f_{\lmb,\gmm}.
\]
Similarly, we define its inverse by $\flat$: 
\[
g^{\flat}(y)\coloneqq\lmb e^{-i\gmm}g(\lmb y).
\]
When a time-dependent scaling parameter $\lmb(t)$ is given, we define
rescaled spacetime variables $s,y$ by the relations 
\begin{equation}
\frac{ds}{dt}=\frac{1}{\lmb^{2}(t)}\qquad\text{and}\qquad y=\frac{r}{\lmb(t)}.\label{eq:def-renormalized-var}
\end{equation}
The raising operation $\sharp$ converts a function $f=f(y)$ to a
function of $r$: $f^{\sharp}=f^{\sharp}(r)$. Similarly, the lowering
operation $\flat$ converts a function $g=g(r)$ to a function of
$y$: $g^{\flat}=g^{\flat}(y)$. In the modulation analysis in this
paper, the dynamical parameters such as $\lmb,\gmm,b,\eta$ are functions
of either the variable $t$ or $s$ under $\frac{ds}{dt}=\frac{1}{\lmb^{2}}$.

For $k\in\bbN$, we define 
\begin{align*}
|f|_{k} & \coloneqq\max\{|f|,|r\rd_{r}f|,\dots,|(r\rd_{r})^{k}f|\},\\
|f|_{-k} & \coloneqq\max\{|\rd_{r}^{k}f|,|\tfrac{1}{r}\rd_{r}^{k-1}f|,\dots,|\tfrac{1}{r^{k}}f|\}.
\end{align*}
We note that $|f|_{k}\sim r^{k}|f|_{-k}$. The following Leibniz rules
hold: 
\[
|fg|_{k}\lesssim|f|_{k}|g|_{k},\qquad|fg|_{-k}\lesssim|f|_{k}|g|_{-k}.
\]

The relevant function spaces will be discussed in Section \ref{subsec:Adapted-function-spaces}.

\subsection{\label{subsec:Linearization-of-CSS}Linearization of \eqref{eq:CSS-m-equiv}}

We quickly record the linearization of \eqref{eq:CSS-m-equiv} around
$Q$. For more detailed exposition, see the corresponding sections
of \cite{KimKwon2019arXiv,KimKwon2020arXiv,KimKwonOh2020arXiv}.

We first linearize the Bogomol'nyi operator $w\mapsto\bfD_{w}w$.
We can write 
\begin{equation}
\bfD_{w+\eps}(w+\eps)=\bfD_{w}w+L_{w}\eps+\text{(h.o.t)},\label{eq:LinearizationBogomolnyi}
\end{equation}
where 
\begin{align*}
L_{w}\eps & \coloneqq\bfD_{w}\eps-\tfrac{2}{y}A_{\tht}[w,\eps]w,
\end{align*}
and $A_{\theta}[\psi_{1},\psi_{2}]$ is defined through the polarization
\[
A_{\tht}[\psi_{1},\psi_{2}]\coloneqq-\tfrac{1}{2}\tint 0r\Re(\br{\psi_{1}}\psi_{2})r'dr'.
\]
The $L^{2}$-adjoint $L_{w}^{\ast}$ of $L_{w}$ takes the form 
\[
L_{w}^{\ast}v=\bfD_{w}^{\ast}v+w{\textstyle \int_{y}^{\infty}}\Re(\br wv)\,dy'.
\]
We remark that the operator $L_{w}$ and its adjoint $L_{w}^{\ast}$
are \emph{only} $\bbR$-linear. From $\bfD_{Q}Q=0$ and \eqref{eq:energy-self-dual-form},
we have the following expansion for the energy: 
\begin{equation}
E[Q+\eps]=\tfrac{1}{2}\|L_{Q}\eps\|_{L^{2}}^{2}+\text{(h.o.t.)}.\label{eq:linearized-energy-expn}
\end{equation}

Next, we linearize \eqref{eq:CSS-m-equiv}, which we write in the
Hamiltonian form $\rd_{t}u+i\nabla E[u]=0$. We decompose 
\begin{equation}
\nabla E[w+\eps]=\nabla E[w]+\mathcal{L}_{w}\eps+R_{w}(\eps),\label{eq:grad-energy-lin-nonlin}
\end{equation}
where $\calL_{w}\eps$ collects the linear terms in $\eps$ and $R_{w}(\eps)$
collects the remainders. Note that $\calL_{w}$ is the Hessian of
$E$, i.e., 
\[
\nabla^{2}E[w]=\calL_{w}.
\]
Being the Hessian of the energy, $\calL_{w}$ is formally symmetric
with respect to the real inner product: 
\[
(\calL_{w}f,g)_{r}=(f,\calL_{w}g)_{r}.
\]
If one recalls \eqref{eq:energy-self-dual-form}, we have $\nabla E[u]=L_{u}^{\ast}\bfD_{u}u$.
Thus \eqref{eq:grad-energy-lin-nonlin} and \eqref{eq:LinearizationBogomolnyi}
yield 
\begin{align*}
\calL_{w}\eps & =L_{w}^{\ast}L_{w}\eps+(\tfrac{1}{y}\tint 0y\Re(\br w\eps)y'dy')\bfD_{w}w\\
 & \qquad\qquad\qquad+w\tint y{\infty}\Re(\br{\eps}\bfD_{w}w)dy'+\eps\tint y{\infty}\Re(\br w\bfD_{w}w)dy'.
\end{align*}
Again, we remark that the operator $\calL_{w}$ is only $\bbR$-linear.
In particular, from $\bfD_{Q}Q=0$, we observe the self-dual factorization
of $i\calL_{Q}$: 
\begin{equation}
i\calL_{Q}=iL_{Q}^{\ast}L_{Q}.\label{eq:calLQ}
\end{equation}
This identity was first observed in \cite{LawrieOhShashahani_unpub}.
Thus, the linearization of \eqref{eq:CSS-m-equiv} at $Q$ is 
\begin{equation}
\rd_{t}\eps+i\calL_{Q}\eps=0,\quad\text{or}\quad\rd_{t}\eps+iL_{Q}^{\ast}L_{Q}\eps=0.\label{eq:lin-CSS-Q}
\end{equation}

Finally, we briefly recall the formal generalized kernel relations
of the linearized operator $i\calL_{Q}$. We have 
\[
N_{g}(i\calL_{Q})=\mathrm{span}_{\bbR}\{\Lambda Q,iQ,\tfrac{i}{4}r^{2}Q,\rho\}
\]
with the relations (see \cite[Proposition 3.4]{KimKwon2019arXiv})
\begin{equation}
\left\{ \begin{aligned}i\calL_{Q}(i\tfrac{r^{2}}{4}Q) & =\Lambda Q, & i\calL_{Q}\rho & =iQ,\\
i\calL_{Q}(\Lambda Q) & =0, & i\calL_{Q}(iQ) & =0,
\end{aligned}
\right.\label{eq:gen-kernel-rel}
\end{equation}
where the existence of $\rho$ is given in \cite[Lemma 3.6]{KimKwon2019arXiv}.
In fact, we have: 
\begin{equation}
\left\{ \begin{aligned}L_{Q}(i\tfrac{r^{2}}{4}Q) & =i\tfrac{r}{2}Q, & L_{Q}\rho & =\tfrac{1}{2(m+1)}rQ,\\
L_{Q}^{\ast}(i\tfrac{r}{2}Q) & =-i\Lambda Q, & L_{Q}^{\ast}(\tfrac{1}{2(m+1)}rQ) & =Q,\\
L_{Q}(\Lambda Q) & =0, & L_{Q}(iQ) & =0.
\end{aligned}
\right.\label{eq:gen-kernel-rel-LQ}
\end{equation}

\subsection{\label{subsec:Adapted-function-spaces}Adapted function spaces}

In this subsection, we quickly recall the equivariant Sobolev spaces
and the adapted function space $\dot{\calH}_{m}^{1}$. For more details,
see \cite{KimKwon2019arXiv,KimKwonOh2020arXiv}.

For $s\geq0$, we denote by $H_{m}^{s}$ and $\dot{H}_{m}^{s}$ the
restriction of the usual Sobolev spaces $H^{s}(\bbR^{2})$ and $\dot{H}^{s}(\bbR^{2})$
on $m$-equivariant functions. For high equivariance indices, we have
the generalized Hardy's inequality \cite[Lemma A.7]{KimKwon2019arXiv}:
whenever $0\leq k\leq|m|$, we have 
\begin{equation}
\||f|_{-k}\|_{L^{2}}\sim\|f\|_{\dot{H}_{m}^{k}}.\label{eq:GenHardySection2}
\end{equation}
Specializing this to $k=1$, we have the \emph{Hardy-Sobolev inequality}
\cite[Lemma A.6]{KimKwon2019arXiv}: whenever $|m|\geq1$, we have
\begin{equation}
\|r^{-1}f\|_{L^{2}}+\|f\|_{L^{\infty}}\lesssim\|f\|_{\dot{H}_{m}^{1}}.\label{eq:HardySobolevSection2}
\end{equation}
Note in general that $H_{0}^{1}\hookrightarrow L^{\infty}$ is \emph{false}.
Finally, we define the weighted Sobolev space $H_{m}^{1,1}$ equipped
with the norm 
\[
\|f\|_{H_{m}^{1,1}}^{2}\coloneqq\|f\|_{H_{m}^{1}}^{2}+\|rf\|_{L^{2}}^{2}.
\]

Next, we define the adapted function space $\dot{\calH}_{m}^{1}$.
This space is motivated by the linear coercivity of energy, namely
the coercivity estimates for the linearized Bogomol'nyi operator $L_{Q}$
at the $\dot{H}^{1}$-level. The available Hardy-type controls on
$f$ from $\|L_{Q}f\|_{L^{2}}$ are different for the cases $m=0$
and $m\geq1$. When $m\geq1$, we have a coercivity of $L_{Q}$ in
terms of the usual $\dot{H}_{m}^{1}$-norm. Thus we let
\[
\dot{\calH}_{m}^{1}\coloneqq\dot{H}_{m}^{1}\quad\text{when }m\geq1.
\]
When $m=0$, the adapted function space $\dot{\calH}_{0}^{1}$ is
defined by the norm 
\[
\|f\|_{\dot{\calH}_{0}^{1}}\coloneqq\|\partial_{r}f\|_{L^{2}}+\|\langle\log_{-}r\rangle^{-1}r^{-1}f\|_{L^{2}}.
\]
We remark that the logarithmic loss near the origin $r=0$ is introduced
due to the failure of Hardy's inequality when $m=0$. Let us note
$\dot{\calH}_{0}^{1}\embed\dot{H}_{0}^{1}$ and $\dot{\calH}_{0}^{1}\cap L^{2}=H_{0}^{1}$.
One also has the following weighted $L^{\infty}$-estimate 
\begin{equation}
\|\langle\log_{-}r\rangle^{-\frac{1}{2}}f\|_{L^{\infty}}\lesssim\|f\|_{\dot{\calH}_{0}^{1}},\label{eq:weighted-Linfty-est}
\end{equation}
which follows from integrating the inequality 
\[
\Big|\rd_{r}\Big(\frac{|f|^{2}}{\langle\log_{-}r\rangle}\Big)\Big|\leq\Big(|\rd_{r}f|+\frac{|f|}{r\langle\log_{-}r\rangle}\Big)\cdot\frac{|f|}{\langle\log_{-}r\rangle}
\]
and applying the fundamental theorem of calculus.

We now state the coercivity estimates of $L_{Q}$ at the $\dot{H}^{1}$-level.
To obtain the coercivity of $L_{Q}$, it is necessary to preclude
the kernel elements $\Lambda Q$ and $iQ$ of $L_{Q}$. We do this
by imposing suitable otrhogonality conditions. We fix profiles $\calZ_{1},\calZ_{2}\in C_{c,m}^{\infty}$
satisfying \emph{the transversality condition} 
\begin{equation}
\det\begin{pmatrix}(\Lambda Q,\calZ_{1})_{r} & (iQ,\calZ_{1})_{r}\\
(\Lambda Q,\calZ_{2})_{r} & (iQ,\calZ_{2})_{r}
\end{pmatrix}\neq0.\label{eq:Z1Z2-transversality}
\end{equation}

\begin{lem}[Coercivity of $L_{Q}$; \cite{KimKwon2019arXiv,KimKwonOh2020arXiv}]
\label{lem:linear-coercivity}Let $m\geq0$. Let $\calZ_{1},\calZ_{2}\in C_{c,m}^{\infty}$
satisfy \eqref{eq:Z1Z2-transversality}. Then, 
\begin{equation}
\|L_{Q}f\|_{L^{2}}\sim\|f\|_{\dot{\calH}_{m}^{1}},\qquad\forall f\in\dot{\calH}_{m}^{1}\text{ with }(f,\mathcal{Z}_{1})_{r}=(f,\mathcal{Z}_{2})_{r}=0.\label{eq:coercivity}
\end{equation}
\end{lem}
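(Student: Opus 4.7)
The plan is to establish the two-sided bound separately. The upper bound $\|L_Q f\|_{L^2} \lesssim \|f\|_{\dot{\calH}_m^1}$ holds unconditionally; the lower bound $\|L_Q f\|_{L^2} \gtrsim \|f\|_{\dot{\calH}_m^1}$ requires the orthogonality and is the main content. The guiding algebraic fact is that, on $\dot{\calH}_m^1$, the $\bbR$-linear kernel of $L_Q$ equals $\mathrm{span}_{\bbR}\{\Lambda Q,\,iQ\}$, as already recorded in \eqref{eq:gen-kernel-rel-LQ}, and the transversality \eqref{eq:Z1Z2-transversality} guarantees that the linear functionals $f\mapsto(f,\calZ_i)_r$ are nondegenerate on this kernel.

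For the upper bound, I would expand
\[
L_Q f=\rd_r f-\tfrac{m+A_\theta[Q]}{r}f-\tfrac{2}{r}A_\theta[Q,f]Q,
\]
and note that the explicit formula \eqref{eq:Q-formula} gives $m+A_\theta[Q]=-(m+2)+O(\langle r\rangle^{-2(m+1)})$ at infinity and $m+A_\theta[Q]=m+O(r^{2(m+1)})$ at the origin, while $Q(r)$ and its moments decay rapidly. The pointwise terms are controlled by the Hardy-Sobolev inequality \eqref{eq:HardySobolevSection2} when $m\geq 1$ and by the weighted estimate \eqref{eq:weighted-Linfty-est} (together with the definition of $\dot{\calH}_0^1$) when $m=0$. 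The nonlocal term $\tfrac{2}{r}A_\theta[Q,f]Q$ is treated by Cauchy-Schwarz using the fast decay of $Q$, which makes it a compact perturbation in both cases.

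For the lower bound I would argue by contradiction in the standard concentration-compactness fashion. Suppose a normalized sequence $f_n\in\dot{\calH}_m^1$ satisfies $(f_n,\calZ_1)_r=(f_n,\calZ_2)_r=0$ and $\|L_Q f_n\|_{L^2}\to 0$. Pass to a weak limit $f_\infty\in\dot{\calH}_m^1$; Rellich-type compactness on bounded annuli, combined with the compact support of $\calZ_i$ and the decay of $Q$ in the nonlocal term, allows us to pass to the limit in both the orthogonality conditions and the equation, yielding $L_Q f_\infty=0$ and $(f_\infty,\calZ_i)_r=0$. The kernel identification forces $f_\infty=a\,\Lambda Q+b\,iQ$ for some $a,b\in\bbR$, and then \eqref{eq:Z1Z2-transversality} forces $a=b=0$. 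To upgrade to a contradiction, observe that since $f_\infty=0$, the nonlocal part $\tfrac{2}{r}A_\theta[Q,f_n]Q$ of $L_Q f_n$ converges to $0$ strongly in $L^2$ by the compactness of the map $f\mapsto A_\theta[Q,f]Q/r$ from $\dot{\calH}_m^1$ to $L^2$ (again using the fast decay of $Q$). Hence $\bfD_Q f_n\to 0$ in $L^2$; combined with the asymptotic behavior of $m+A_\theta[Q]$ and local compactness, this forces $\|f_n\|_{\dot{\calH}_m^1}\to 0$, contradicting the normalization.

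The main obstacle is the kernel characterization $\ker L_Q=\mathrm{span}_{\bbR}\{\Lambda Q,\,iQ\}$ within $\dot{\calH}_m^1$. The equation $L_Q g=0$ is an $\bbR$-linear integro-differential system coupling the real and imaginary parts of $g$ through $A_\theta[Q,g]$. I would decompose $g=g_1+ig_2$, use that $A_\theta[Q,g]$ only depends on $g_1$ since $Q$ is real, and reduce to two ODEs whose regular-at-the-origin solutions can be written explicitly via the scaling and phase generators of $Q$; admissibility in $\dot{\calH}_m^1$ (including the logarithmic weight when $m=0$) eliminates the remaining independent solution. The $m=0$ case is the delicate one because the coercivity uses the weighted norm rather than the plain $\dot H^1$-norm, so the Rellich and kernel arguments must be carried out with the weight $\langle\log_- r\rangle^{-1}r^{-1}$ in mind; this is precisely the point at which the definition of $\dot{\calH}_0^1$ is calibrated.
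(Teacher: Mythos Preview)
The paper does not actually prove this lemma; it is quoted from the cited references, where the full argument (upper bound, kernel identification, and subcoercivity) is carried out. Your sketch follows the same standard route as those references and is correct in outline: the upper bound by direct estimation, the lower bound by contradiction via weak compactness, the identification $\ker L_Q\cap\dot{\calH}_m^1=\mathrm{span}_{\bbR}\{\Lambda Q,iQ\}$, and then an upgrade from weak to strong convergence.

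One step deserves to be made explicit. In your final paragraph you pass from $\bfD_Q f_n\to0$ in $L^2$ and $f_n\rightharpoonup0$ to $\|f_n\|_{\dot{\calH}_m^1}\to0$, invoking only ``the asymptotic behavior of $m+A_\theta[Q]$ and local compactness.'' This is not automatic: $\bfD_Q$ has the nontrivial kernel $\mathrm{span}_{\bbR}\{Q,iQ\}$ in $\dot{\calH}_m^1$ (from $\bfD_Q Q=0$), so strong convergence cannot follow from $\|\bfD_Q f_n\|_{L^2}\to0$ alone. What is needed is a \emph{subcoercivity} estimate of the form
\[
\|\bfD_Q f\|_{L^2}^2+\text{(relatively compact seminorm of }f\text{)}\ \gtrsim\ \|f\|_{\dot{\calH}_m^1}^2,
\]
after which $f_n\to0$ in $L^2_{\mathrm{loc}}$ (from Rellich and the embedding $\dot{\calH}_m^1\hookrightarrow L^2_{\mathrm{loc}}$, using \eqref{eq:weighted-Linfty-est} when $m=0$) closes the contradiction. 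Such an estimate does hold, precisely because $m+A_\theta[Q]\to m$ at the origin and $m+A_\theta[Q]\to-(m+2)$ at infinity give good Hardy control outside a compact annulus; but it is a separate lemma, and indeed the paper itself invokes exactly this ``subcoercivity estimate'' (for $L_Q$ rather than $\bfD_Q$) from the same references in the proof of Lemma~\ref{lem:orbital-stability-small-energy}. Your decoupling of the kernel equation into real and imaginary parts via the observation that $A_\theta[Q,g]$ depends only on $\Re g$ is the right way to identify the kernel, and your remark that the $m=0$ case requires the calibrated weight is on point.
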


\subsection{Duality estimates}

In this subsection, we collect estimates for the nonlinearity of \eqref{eq:CSS-m-equiv}.
Some of the following multilinear estimates already appeared in \cite{KimKwon2019arXiv}.
Here we slightly generalize them for our needs.

We first introduce several more pieces of notation, in order to estimate
systematically the errors from the nonlinearity of \eqref{eq:CSS-m-equiv}.
Denote by $\calN(u)$ the nonlinearity of \eqref{eq:CSS-m-equiv}:
\[
\calN(u)\coloneqq(-|u|^{2}+\tfrac{2m}{r^{2}}A_{\theta}[u]+\tfrac{1}{r^{2}}A_{\theta}^{2}[u]+A_{t}[u])u.
\]
The nonlinearity $\calN(u)$ decomposes into the sum of the cubic
and quintic nonlinearities: 
\[
\calN=\calN_{3,0}+m(\calN_{3,1}+\calN_{3,2})+\calN_{5,1}+\calN_{5,2},
\]
where we abbreviate $\calN_{\ast}(u)\coloneqq\calN_{\ast}(u,\dots,u)$
(where $\ast$ is a place-holder) and denote the cubic nonlinearities
by 
\begin{align*}
\calN_{3,0}(\psi_{1},\psi_{2},\psi_{3}) & \coloneqq-\Re(\br{\psi_{1}}\psi_{2})\psi_{3},\\
\calN_{3,1}(\psi_{1},\psi_{2},\psi_{3}) & \coloneqq\tfrac{2}{r^{2}}A_{\theta}[\psi_{1},\psi_{2}]\psi_{3},\\
\calN_{3,2}(\psi_{1},\psi_{2},\psi_{3}) & \coloneqq-(\tint r{\infty}\Re(\br{\psi_{1}}\psi_{2})\tfrac{dr'}{r'})\psi_{3},
\end{align*}
and the quintic nonlinearities by 
\begin{align*}
\calN_{5,1}(\psi_{1},\dots,\psi_{5}) & \coloneqq\tfrac{1}{r^{2}}A_{\theta}[\psi_{1},\psi_{2}]A_{\theta}[\psi_{3},\psi_{4}]\psi_{5},\\
\calN_{5,2}(\psi_{1},\dots,\psi_{5}) & \coloneqq-(\tint r{\infty}A_{\theta}[\psi_{1},\psi_{2}]\Re(\br{\psi_{3}}\psi_{4})\tfrac{dr'}{r'})\psi_{5}.
\end{align*}
We remark that $\calN_{3,1}$ and $\calN_{3,2}$ do not appear in
the case $m=0$.

In view of the Hamiltonian structure of \eqref{eq:CSS-m-equiv}, the
nonlinearity of \eqref{eq:CSS-m-equiv} arises as a part of the functional
derivative of the energy, i.e., 
\[
\nabla E[u]=-\Delta^{(m)}u+\calN(u).
\]
In order to relate $\calN_{\ast}$ with each component of the energy,
we decompose 
\[
E[u]=\tfrac{1}{2}\tint{}{}(|\partial_{r}u|^{2}+\tfrac{|m|^{2}}{r^{2}}|u|^{2})+\calM_{4,0}[u]+m\mathcal{M}_{4,1}[u]+\mathcal{M}_{6}[u],
\]
where we abbreviate $\calM_{\ast}(u)\coloneqq\calM_{\ast}(u,\dots,u)$
and denote the multilinear forms by 
\begin{align*}
\calM_{4,0}(\psi_{1},\dots,\psi_{4}) & \coloneqq-\tfrac{1}{4}\Re(\br{\psi_{1}}\psi_{2})\Re(\br{\psi_{3}}\psi_{4}),\\
\calM_{4,1}(\psi_{1},\dots,\psi_{4}) & \coloneqq\tint{}{}\tfrac{1}{r^{2}}A_{\theta}[\psi_{1},\psi_{2}]\Re(\br{\psi_{3}}\psi_{4}),\\
\calM_{6}(\psi_{1},\dots,\psi_{6}) & \coloneqq\tfrac{1}{2}\tint{}{}\tfrac{1}{r^{2}}A_{\theta}[\psi_{1},\psi_{2}]A_{\theta}[\psi_{3},\psi_{4}]\Re(\br{\psi_{5}}\psi_{6}).
\end{align*}
It is then easy to verify that 
\begin{equation}
\left\{ \begin{aligned}(\calN_{3,0}(\psi_{1},\psi_{2},\psi_{3}),\psi_{4})_{r} & =4\calM_{4,0}(\psi_{1},\psi_{2},\psi_{3},\psi_{4}),\\
(m\calN_{3,1}(\psi_{1},\psi_{2},\psi_{3}),\psi_{4})_{r} & =2m\calM_{4,1}(\psi_{1},\psi_{2},\psi_{3},\psi_{4}),\\
(m\calN_{3,2}(\psi_{1},\psi_{2},\psi_{3}),\psi_{4})_{r} & =2m\calM_{4,1}(\psi_{3},\psi_{4},\psi_{1},\psi_{2}),\\
(\calN_{5,1}(\psi_{1},\psi_{2},\psi_{3},\psi_{4},\psi_{5}),\psi_{6})_{r} & =2\calM_{6}(\psi_{1},\psi_{2},\psi_{3},\psi_{4},\psi_{5},\psi_{6}),\\
(\calN_{5,2}(\psi_{1},\psi_{2},\psi_{3},\psi_{4},\psi_{5}),\psi_{6})_{r} & =4\calM_{6}(\psi_{1},\psi_{2},\psi_{5},\psi_{6},\psi_{3},\psi_{4}).
\end{aligned}
\right.\label{eq:duality-relations}
\end{equation}
We remark that $\calM_{4,1}$ does not appear in the case $m=0$.

We turn to study the boundedness properties of $\calM_{\ast}$ and
$\calN_{\ast}$. Note that the above relations, in view of duality,
tell us that estimates for the multilinear forms $\calM_{\ast}$ might
transfer to those of $\calN_{\ast}$. We start with the mapping properties
of the integral operators:
\begin{lem}[Mapping properties for integral operators]
Let $1\leq p,q\leq\infty$ and $s\in[0,2]$ be such that $(p,q,s)=(1,\infty,0)$
or $\frac{1}{q}+1=\frac{1}{p}+\frac{s}{2}$ with $p>1$. Then, we
have 
\begin{equation}
\Big\|\frac{1}{r^{s}}\int_{0}^{r}f(r')r'dr'\Big\|_{L^{q}}\lesssim_{p}\|f\|_{L^{p}}.\label{eq:integral-operator-bdd1}
\end{equation}
\end{lem}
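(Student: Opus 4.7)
My plan is to pass to logarithmic coordinates so that the operator becomes a convolution on $\bbR$, and then apply Young's convolution inequality. The endpoint case $(p, q, s) = (1, \infty, 0)$ is immediate from
\[
\sup_{r} \Big|\tint{0}{r} f(r')\, r'\, dr'\Big| \leq \tint{0}{\infty} |f(r')|\, r'\, dr' = \tfrac{1}{2\pi}\|f\|_{L^{1}},
\]
so from now on I focus on the generic case $p > 1$ with $\frac{1}{q} + 1 = \frac{1}{p} + \frac{s}{2}$, which forces $s \in (0, 2]$.

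Setting $u = \log r$, I will introduce the weighted rescalings
\[
F(u) := e^{2u/p} f(e^{u}), \qquad G(u) := e^{2u/q}\, Tf(e^{u}), \qquad Tf(r) := \frac{1}{r^{s}}\tint{0}{r} f(r')\, r'\, dr'.
\]
These weights are chosen so that $\|F\|_{L^{p}(du)} = (2\pi)^{-1/p}\|f\|_{L^{p}}$ and $\|G\|_{L^{q}(du)} = (2\pi)^{-1/q}\|Tf\|_{L^{q}}$, reducing matters to an $L^{p} \to L^{q}$ bound for $F \mapsto G$ on $\bbR$. A direct calculation, substituting $r' = e^{v}$ followed by $v = u - w$, gives
\[
G(u) = e^{(\frac{2}{q} - s + \frac{2}{p'})u}\, \tint{0}{\infty} F(u - w)\, e^{-2w/p'}\, dw.
\]
The scaling hypothesis is exactly the identity $\frac{2}{q} + \frac{2}{p'} = s$, so the exponential prefactor collapses to $1$ and I am left with the convolution identity $G = F * K$ for the kernel $K(w) := e^{-2w/p'}\, \mathbf{1}_{\{w > 0\}}$.

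It then remains to apply Young's inequality $\|F * K\|_{L^{q}} \leq \|F\|_{L^{p}} \|K\|_{L^{r_{0}}}$ with the exponent relation $\frac{1}{q} + 1 = \frac{1}{p} + \frac{1}{r_{0}}$. Combined with the scaling hypothesis, this pins down $\frac{1}{r_{0}} = \frac{s}{2}$, i.e., $r_{0} = 2/s \in [1, \infty)$, and since $p' < \infty$ the explicit computation $\|K\|_{L^{r_{0}}}^{r_{0}} = p'/(2 r_{0})$ shows $K \in L^{r_{0}}(\bbR)$ with a constant depending only on $p$. Undoing the logarithmic substitution recovers the stated bound on $\|Tf\|_{L^{q}}$. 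I do not anticipate any real obstacle here; the one thing worth highlighting is that the interplay between the scaling condition and the exponent relation in Young's inequality is built into the choice of the exponential weights in $F$ and $G$, which is what makes the proof essentially a one-line application of Young once the change of variables is set up.
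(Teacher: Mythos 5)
Your proof is correct, and it takes a genuinely different route from the paper. The paper argues by cases: for $q=p$, $s=2$ it uses a change of variables to $\int_0^1 f(ru)\,u\,du$ plus Minkowski's integral inequality; for $q=\infty$, $s=2-2/p$ it uses Hölder; and for intermediate $q$ it interpolates the operator $\frac{1}{r^{s}}\int_0^r f\,r'\,dr'$ between these two endpoints (noting that the operator is the same up to a power of $r$, so the endpoint bounds for $L^p\to L^p$ and $L^p\to L^\infty$ interpolate). Your approach instead passes to logarithmic coordinates $u=\log r$, at which point the operator becomes exactly the convolution with $K(w)=e^{-2w/p'}\chf_{\{w>0\}}$, and the scaling relation $\frac1q+1=\frac1p+\frac{s}{2}$ is precisely what makes the exponential prefactor vanish; Young's inequality then gives the full family of bounds in one stroke with $r_0=2/s\in[1,\infty)$ and $\|K\|_{L^{r_0}}=(p'/2r_0)^{1/r_0}<\infty$. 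Your argument is more uniform (no case split, no interpolation) and makes the role of the scaling condition transparent as a dilation-invariance statement on the multiplicative group $\bbR_{+}$; the paper's is more elementary in that it avoids invoking Young on the line. Both yield constants depending on $(p,s)$ — the $\lesssim_p$ in the statement is a slight abuse in both treatments — and both correctly isolate the boundary case $(1,\infty,0)$, which fails the $p>1$ hypothesis needed in either argument (Minkowski's constant $p'/2$ and your $\|K\|_{L^{r_0}}$ both blow up as $p\to1^+$).
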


\begin{proof}
Note by the definition of $\|f\|_{L^{1}}$ that the estimate is immediate
when $(p,q,s)=(1,\infty,0)$. Henceforth, we assume $p>1$. When $q=p$
and $s=2$, then the proof follows from a change of variables and
Minkowski's inequality:
\begin{align*}
\|\tfrac{1}{r^{2}}\tint 0rf(r')r'dr'\|_{L^{p}} & =\|\tint 01f(ru)udu\|_{L^{p}(rdr)}\\
 & \leq\tint 01\|f(ru)\|_{L^{p}(rdr)}udu=\tint 01u^{1-\frac{2}{p}}\|f\|_{L^{p}}du=\tfrac{p}{2(p-1)}\|f\|_{L^{p}}.
\end{align*}
When $q=\infty$ and $s=2-\frac{2}{p}$, then by Hölder we have 
\[
\|\tfrac{1}{r^{s}}\tint 0rf(r')r'dr'\|_{L^{\infty}}\leq\|f\|_{L^{p}}\cdot\sup_{r\in(0,\infty)}\tfrac{1}{r^{s}}\|\chf_{r'\leq r}\|_{L^{p'}(r'dr')}\lesssim_{p}\|f\|_{L^{p}},
\]
where $\frac{1}{p'}=1-\frac{1}{p}$. For $q\in(p,\infty)$, the estimate
follows from the interpolation: 
\[
\|\tfrac{1}{r^{s}}\tint 0rf(r')r'dr'\|_{L^{q}}\leq\|\tfrac{1}{r^{2}}\tint 0rf(r')r'dr'\|_{L^{p}}^{\theta}\|\tfrac{1}{r^{2-(2/p)}}\tint 0rf(r')r'dr'\|_{L^{\infty}}^{1-\theta}\lesssim_{p}\|f\|_{L^{p}},
\]
where $\theta=\frac{p}{q}\in(0,1)$. This completes the proof of \eqref{eq:integral-operator-bdd1}.
\end{proof}
We then record the Hölder- and weighted $L^{1}$-type estimates for
the multilinear forms $\calM_{\ast}$.
\begin{lem}[Duality estimates (Hölder-type)]
\label{lem:duality-estimates-Holder}The following estimates hold.
\begin{itemize}
\item (For $\mathcal{M}_{4,\ast}$) Let $1\leq p,q\leq\infty$ be such that
$\frac{1}{p}+\frac{1}{q}=1$. Then, we have 
\begin{align*}
|\mathcal{M}_{4,0}(\psi_{1},\psi_{2},\psi_{3},\psi_{4})| & \lesssim\|\psi_{1}\psi_{2}\|_{L^{p}}\|\psi_{3}\psi_{4}\|_{L^{q}},\\
|\mathcal{M}_{4,1}(\psi_{1},\psi_{2},\psi_{3},\psi_{4})| & \lesssim_{p}\|\psi_{1}\psi_{2}\|_{L^{p}}\|\psi_{3}\psi_{4}\|_{L^{q}},\quad\text{if }(p,q)\neq(1,\infty).
\end{align*}
\item (For $\mathcal{M}_{6}$) Let $1\leq p,q,r\leq\infty$ be such that
$\frac{1}{p}+\frac{1}{q}+\frac{1}{r}=2$ and $(p,q,r)\neq(1,1,\infty)$.
Then, we have 
\[
|\mathcal{M}_{6}(\psi_{1},\dots,\psi_{6})|\lesssim_{p,q}\|\psi_{1}\psi_{2}\|_{L^{p}}\|\psi_{3}\psi_{4}\|_{L^{q}}\|\psi_{5}\psi_{6}\|_{L^{r}}.
\]
\end{itemize}
\end{lem}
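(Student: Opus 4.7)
The plan is to combine Hölder's inequality with the Hardy-type boundedness of the integral operator \eqref{eq:integral-operator-bdd1}, applied to each nonlocal factor of the form $r^{-s}A_\theta[\cdot,\cdot]$ present in $\calM_\ast$. The bound for $\calM_{4,0}$ is immediate from Hölder, since no $A_\theta$ factor is involved.

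For $\calM_{4,1}$, apply Hölder with exponents $(p,q)$ to produce
\begin{equation*}
|\calM_{4,1}(\psi_1,\psi_2,\psi_3,\psi_4)|\leq \|r^{-2}A_\theta[\psi_1,\psi_2]\|_{L^p}\|\psi_3\psi_4\|_{L^q},
\end{equation*}
and control the first factor by $\|\psi_1\psi_2\|_{L^p}$ using \eqref{eq:integral-operator-bdd1} with $s=2$, $q=p$; this is valid for all $p>1$, including $p=\infty$, since $(p,q,s)=(\infty,\infty,2)$ is admissible in \eqref{eq:integral-operator-bdd1}. The excluded case $(p,q)=(1,\infty)$ corresponds precisely to the failure of the $L^1\to L^1$ Hardy bound, caused by the logarithmic divergence of $\int r^{-1}\,dr$ at infinity together with the non-decay of $A_\theta[\psi_1,\psi_2]$.

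The $\calM_6$ estimate is more delicate, since the single weight $r^{-2}$ must be distributed between the two nonlocal factors. Assume first $p,q>1$. Split $r^{-2}=r^{-s_1}\cdot r^{-s_2}$ with $s_1+s_2=2$, $s_i\in(0,2)$, and apply Hölder with three exponents $(a_1,a_2,r)$ satisfying $1/a_1+1/a_2+1/r=1$:
\begin{equation*}
|\calM_6|\lesssim \|r^{-s_1}A_\theta[\psi_1,\psi_2]\|_{L^{a_1}}\|r^{-s_2}A_\theta[\psi_3,\psi_4]\|_{L^{a_2}}\|\psi_5\psi_6\|_{L^r}.
\end{equation*}
By \eqref{eq:integral-operator-bdd1}, each nonlocal factor is dominated by $\|\psi_i\psi_{i+1}\|_{L^{p_i}}$ with $1/a_i=1/p_i+s_i/2-1$ (setting $p_1=p$, $p_2=q$); summing these two identities together with $1/a_3=1/r=1/p_3$ yields $\sum_i 1/a_i=\sum_i 1/p_i+(s_1+s_2)/2-2=1$, as required by Hölder. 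Admissibility $a_i\geq 1$ amounts to $s_i\geq 2(1-1/p_i)$, whose joint feasibility under $s_1+s_2=2$ is equivalent to $1/p+1/q\geq 1$, i.e.\ $r\geq 1$, which always holds. The remaining cases $p=1$ or $q=1$ (say $p=1$) are treated by invoking the trivial bound $\|A_\theta[\psi_1,\psi_2]\|_{L^\infty}\leq \tfrac{1}{2}\|\psi_1\psi_2\|_{L^1}$ coming from the special case $(1,\infty,0)$ of \eqref{eq:integral-operator-bdd1}, which reduces the $\calM_6$ estimate to an $\calM_{4,1}$-type bound in the remaining four arguments; the previously established $\calM_{4,1}$ step then applies except when the residual pair of exponents is $(1,\infty)$, and combining with $p=1$ excludes exactly the corner $(p,q,r)=(1,1,\infty)$.

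The main (and essentially only) obstacle is the exponent bookkeeping for $\calM_6$: verifying that compatible split parameters $(s_1,s_2)$ and Hölder exponents $(a_1,a_2)$ exist for every admissible triple $(p,q,r)$ with $\sum 1/p_i=2$. The excluded corner $(1,1,\infty)$ is genuine, reflecting the same logarithmic divergence at spatial infinity as in the $\calM_{4,1}$ endpoint: both $A_\theta$ factors asymptote to nonzero constants, so the integrand decays only like $r^{-1}$.
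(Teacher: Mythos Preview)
Your argument is correct and proceeds exactly as the paper does: H\"older combined with the bound \eqref{eq:integral-operator-bdd1} applied to each $A_\theta$ factor. For $\calM_6$ the paper simply fixes the split $a_1=\infty$, $s_1=2-2/p$, $a_2=r'$, $s_2=2/p$ (after using the symmetry in the first two argument pairs to arrange $q>1$), which handles all admissible $(p,q,r)$---including $p=1$ via the endpoint $(1,\infty,0)$ of \eqref{eq:integral-operator-bdd1}---in one stroke, whereas your general two-parameter split with a feasibility check is equally valid but slightly less economical. One minor slip: the constraint $s_i\geq 2(1-1/p_i)$ you derive is the condition $1/a_i\geq 0$ (i.e.\ $a_i\leq\infty$), not $a_i\geq 1$; the latter reads $s_i\leq 4-2/p_i$ and is automatic since $s_i\leq 2$, so your feasibility conclusion is unaffected.
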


\begin{proof}
For $\calM_{4,0}$, this is just Hölder's inequality. For $\calM_{4,1}$,
we assume $p>1$ and apply \eqref{eq:integral-operator-bdd1} with
$s=2$ to have 
\[
|\mathcal{M}_{4,1}(\psi_{1},\psi_{2},\psi_{3},\psi_{4})|\lesssim_{p}\|\tfrac{1}{r^{2}}A_{\theta}[\psi_{1},\psi_{2}]\|_{L^{p}}\|\psi_{3}\psi_{4}\|_{L^{q}}\lesssim\|\psi_{1}\psi_{2}\|_{L^{p}}\|\psi_{3}\psi_{4}\|_{L^{q}}.
\]
For $\calM_{6}$, assume $(p,q,r)\neq(1,1,\infty)$. By symmetry,
we may assume $q>1$. We then use \eqref{eq:integral-operator-bdd1}
to have 
\begin{align*}
|\mathcal{M}_{6}(\psi_{1},\dots,\psi_{6})| & \lesssim\|\tfrac{1}{r^{2-(2/p)}}A_{\theta}[\psi_{1},\psi_{2}]\|_{L^{\infty}}\|\tfrac{1}{r^{2/p}}A_{\tht}[\psi_{3},\psi_{4}]\|_{L^{r'}}\|\psi_{5}\psi_{6}\|_{L^{r}}\\
 & \lesssim_{p,q}\|\psi_{1}\psi_{2}\|_{L^{p}}\|\psi_{3}\psi_{4}\|_{L^{q}}\|\psi_{5}\psi_{6}\|_{L^{r}},
\end{align*}
where we denoted $\tfrac{1}{r'}\coloneqq1-\tfrac{1}{r}$. This completes
the proof.
\end{proof}
\begin{lem}[Duality estimates (weighted $L^{1}$-type)]
\label{lem:duality-estimates-weighted-L1}The following estimates
hold.
\begin{itemize}
\item (For $\mathcal{M}_{4,1}$) Let $w_{12},w_{34}:(0,\infty)\to\bbR_{+}$
be decreasing functions such that $w_{12}(r)w_{34}(r)=\frac{1}{r^{2}}$.
Then, we have 
\begin{align*}
|\mathcal{M}_{4,1}(\psi_{1},\psi_{2},\psi_{3},\psi_{4})| & \lesssim\|w_{12}\psi_{1}\psi_{2}\|_{L^{1}}\|w_{34}\psi_{3}\psi_{4}\|_{L^{1}}.
\end{align*}
\item (For $\mathcal{M}_{6}$) Let $w_{12},w_{34},w_{56}:(0,\infty)\to\bbR_{+}$
be decreasing functions such that $w_{12}(r)w_{34}(r)w_{56}(r)=\frac{1}{r^{2}}$.
Then, we have 
\[
|\mathcal{M}_{6}(\psi_{1},\dots,\psi_{6})|\lesssim\|w_{12}\psi_{1}\psi_{2}\|_{L^{1}}\|w_{34}\psi_{3}\psi_{4}\|_{L^{1}}\|w_{56}\psi_{5}\psi_{6}\|_{L^{1}}.
\]
\end{itemize}
\end{lem}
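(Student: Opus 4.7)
My plan is to attack both bounds by the same two-step strategy: unfold the definitions of $A_\theta$ to express $\calM_{\ast}$ as an iterated integral, and then use the decreasing monotonicity of the weights to pointwise dominate the resulting kernel by a \emph{separated} product, so that Fubini factors the integral into the desired product of $L^1$-norms.

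For the $\calM_{4,1}$ estimate, I would substitute $A_\theta[\psi_1,\psi_2](r) = -\tfrac12 \int_0^r \Re(\br{\psi_1}\psi_2)(r')\,r'\,dr'$ and recall the convention $\int = 2\pi \int \cdot r\,dr$ to rewrite $\calM_{4,1}$ (up to a harmless constant) as a double integral over the region $\{r' \le r\}$ with kernel $r'/r$. On this triangle, decreasing monotonicity of $w_{12}$ together with the identity $w_{12}(r)w_{34}(r) = 1/r^2$ yields the pointwise bound $1/r^2 = w_{12}(r)w_{34}(r) \le w_{12}(r')\,w_{34}(r)$, and hence $r'/r \le r' r\,w_{12}(r')w_{34}(r)$. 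Taking absolute values, enlarging the domain to $(0,\infty)^2$, and applying Fubini then separates the integral into the product of $\int |\psi_1\psi_2|(r')\,w_{12}(r')\,r'\,dr'$ and $\int |\psi_3\psi_4|(r)\,w_{34}(r)\,r\,dr$, which are exactly the two claimed $L^1$-norms (modulo factors of $2\pi$).

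The $\calM_6$ estimate follows the same template, with one more layer of integration: unfolding both $A_\theta[\psi_1,\psi_2]$ and $A_\theta[\psi_3,\psi_4]$ expresses $\calM_6$ as a triple integral over $\{(r_1, r_2, r) : r_1 \le r,\ r_2 \le r\}$ with kernel $r_1 r_2 / r$. On this region, decreasing monotonicity of $w_{12}$ and $w_{34}$ upgrades the identity $w_{12}(r)w_{34}(r)w_{56}(r) = 1/r^2$ to the pointwise inequality $1/r^2 \le w_{12}(r_1)\,w_{34}(r_2)\,w_{56}(r)$ (monotonicity of $w_{56}$ in fact plays no role here), after which Fubini immediately yields the product of three $L^1$-norms. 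No step is genuinely hard; the only subtlety is the bookkeeping of the $2\pi r\,dr$-measure hidden inside $\int$ and $\|\cdot\|_{L^1}$, so that the $r$-factors from $r\,dr$ pair correctly with the $r_i$-factors surviving from the unfolded $A_\theta$ integrals.
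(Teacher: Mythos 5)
Your proof is correct and follows essentially the same route as the paper: both exploit that a decreasing weight evaluated at the outer radius can be replaced by the same weight evaluated at the (smaller) inner radius, which is precisely what makes the nested $A_\theta$-integrals factor. The paper applies this monotonicity bound inside the nested integrals (bounding $|w_{12}(r)A_\theta[\psi_1,\psi_2](r)|$ by $\|w_{12}\psi_1\psi_2\|_{L^1}$ uniformly in $r$) rather than first unfolding to a joint integral over a simplex and invoking Fubini, but that is a presentational difference only.
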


\begin{proof}
Let us only prove the lemma for $\calM_{6}$. We start from writing
\[
\calM_{6}=\tfrac{1}{2}\tint{}{}w_{12}w_{34}w_{56}A_{\theta}[\psi_{1},\psi_{2}]A_{\theta}[\psi_{3},\psi_{4}]\Re(\br{\psi_{5}}\psi_{6}).
\]
Since $w_{12}$ and $w_{34}$ are decreasing, we have 
\[
|w_{12}(r)\tint 0r\Re(\br{\psi_{1}}\psi_{2})r'dr'|\leq\tint 0r|w_{12}\psi_{1}\psi_{2}|r'dr'\leq\|w_{12}\psi_{1}\psi_{2}\|_{L^{1}}
\]
and a similar estimate for $\psi_{3}\psi_{4}$. Thus 
\begin{align*}
|\calM_{6}(\psi_{1},\dots,\psi_{6})| & \leq\tint 0{\infty}\|w_{12}\psi_{1}\psi_{2}\|_{L^{1}}\|w_{34}\psi_{3}\psi_{4}\|_{L^{1}}|w_{56}\psi_{5}\psi_{6}|rdr\\
 & \leq\|w_{12}\psi_{1}\psi_{2}\|_{L^{1}}\|w_{34}\psi_{3}\psi_{4}\|_{L^{1}}\|w_{56}\psi_{5}\psi_{6}\|_{L^{1}}.
\end{align*}
This completes the proof.
\end{proof}
The following two corollaries follow from the duality relations \eqref{eq:duality-relations}
and the above two lemmas.
\begin{cor}[Nonlinear estimates (Hölder-type)]
\label{cor:nonlinear-estimates-holder}For $p\in[1,\infty]$, denote
by $p'$ the Hölder conjugate exponent. The following estimates hold.
\begin{itemize}
\item (For $\mathcal{N}_{3,k}$) For any $1\leq p_{1},\dots,p_{4}\leq\infty$
with $\sum_{j=1}^{4}\frac{1}{p_{j}}=1$ and $\#\{j:p_{j}=\infty\}\leq1$,
we have 
\[
\|\mathcal{N}_{3,k}(\psi_{1},\psi_{2},\psi_{3})\|_{L^{p_{4}'}}\lesssim_{p_{1},p_{2},p_{3}}\|\psi_{1}\|_{L^{p_{1}}}\|\psi_{2}\|_{L^{p_{2}}}\|\psi_{3}\|_{L^{p_{3}}}.
\]
\item (For $\mathcal{N}_{5,k}$) For any $1\leq p_{1},\dots,p_{6}\leq\infty$
with $\sum_{j=1}^{6}\frac{1}{p_{j}}=2$ and $\#\{j:p_{j}=\infty\}\leq1$,
we have 
\[
\|\mathcal{N}_{5,k}(\psi_{1},\dots,\psi_{5})\|_{L^{p_{6}'}}\lesssim_{p_{1},\dots,p_{5}}\prod_{j=1}^{5}\|\psi_{j}\|_{L^{p_{j}}}.
\]
\end{itemize}
\end{cor}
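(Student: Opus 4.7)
The plan is to deduce the estimate from Lemma~\ref{lem:duality-estimates-Holder} by exploiting the duality relations \eqref{eq:duality-relations}. For the cubic bounds, I would fix $\psi_1,\psi_2,\psi_3$, test $\mathcal{N}_{3,k}(\psi_1,\psi_2,\psi_3)$ against an arbitrary $\psi_4 \in L^{p_4}$ with unit norm, and use the $L^{p_4'}$--$L^{p_4}$ duality
\[
\|\mathcal{N}_{3,k}(\psi_1,\psi_2,\psi_3)\|_{L^{p_4'}} \sim \sup_{\|\psi_4\|_{L^{p_4}}=1}\bigl|(\mathcal{N}_{3,k}(\psi_1,\psi_2,\psi_3),\psi_4)_r\bigr|.
\]
By the relevant identity in \eqref{eq:duality-relations}, each such pairing equals (up to a universal constant and a permutation of indices) $\mathcal{M}_{4,0}$ or $\mathcal{M}_{4,1}$ evaluated on $(\psi_1,\psi_2,\psi_3,\psi_4)$. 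One then partitions these four factors into two pairs $\{a,b\}$, $\{c,d\}$, defines $q,q'$ by $\tfrac{1}{q} = \tfrac{1}{p_a}+\tfrac{1}{p_b}$ and $\tfrac{1}{q'} = \tfrac{1}{p_c}+\tfrac{1}{p_d}$ (Hölder conjugates, by the hypothesis $\sum \tfrac{1}{p_j} = 1$), and applies Lemma~\ref{lem:duality-estimates-Holder} followed by the two-factor Hölder inequality within each pair to produce the desired product of $L^{p_j}$-norms.

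The only non-mechanical point is the check that $(q,q')$ satisfies the hypothesis $(q,q') \neq (1,\infty)$ required by Lemma~\ref{lem:duality-estimates-Holder} for $\mathcal{M}_{4,1}$. This is where $\#\{j: p_j = \infty\} \leq 1$ enters: if $q' = \infty$, then $\tfrac{1}{p_c} = \tfrac{1}{p_d} = 0$, forcing both $p_c, p_d = \infty$ and contradicting the hypothesis. Hence $q,q' \in [1,\infty)$, and in particular $(q,q') \neq (1,\infty)$, so Lemma~\ref{lem:duality-estimates-Holder} is applicable. (For $\mathcal{N}_{3,0}$, this verification is unnecessary, as the corresponding bound for $\mathcal{M}_{4,0}$ has no forbidden exponent configuration.)

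For the quintic estimates, the plan is entirely analogous: pair $\mathcal{N}_{5,k}(\psi_1,\ldots,\psi_5)$ against $\psi_6 \in L^{p_6}$ of unit norm, use \eqref{eq:duality-relations} to rewrite as $\mathcal{M}_6(\psi_{\tau(1)},\ldots,\psi_{\tau(6)})$ for a suitable permutation $\tau$, partition the six factors into three pairs with exponents $(q_1,q_2,q_3)$ satisfying $\sum_i \tfrac{1}{q_i} = \sum_j \tfrac{1}{p_j} = 2$, and invoke Lemma~\ref{lem:duality-estimates-Holder} for $\mathcal{M}_6$ together with pairwise Hölder. The verification that $(q_1,q_2,q_3)$ is not a permutation of the forbidden triple $(1,1,\infty)$ proceeds by the same logic: an $\infty$-entry among the $q_i$'s would force two $p_j$'s to equal $\infty$, contradicting the hypothesis. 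I expect no further obstacles; the whole argument is a routine combination of duality and Hölder, with the only subtle ingredient being this combinatorial compatibility check between the hypothesis on the $p_j$'s and the exclusion clauses in Lemma~\ref{lem:duality-estimates-Holder}.
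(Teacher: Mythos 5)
Your proposal is correct and follows exactly the route the paper intends: the paper states the corollary ``follows from the duality relations \eqref{eq:duality-relations} and the above two lemmas'' without spelling out the argument, and your reconstruction — $L^{p_4'}$--$L^{p_4}$ duality, rewriting the pairing as $\calM_{4,0}$, $\calM_{4,1}$, or $\calM_{6}$ via \eqref{eq:duality-relations}, applying Lemma~\ref{lem:duality-estimates-Holder} with the pair partition fixed by the form of each multilinear estimate, and verifying that $\#\{j:p_{j}=\infty\}\leq1$ rules out the forbidden configurations $(1,\infty)$ and $(1,1,\infty)$ — is precisely that argument. One small slip in your combinatorial check: showing $q'\neq\infty$ alone does not directly give $q\in[1,\infty)$; rather, $q'\neq\infty$ together with $\frac1q+\frac1{q'}=1$ forces $q>1$, and a symmetric application of the hypothesis shows $q\neq\infty$ as well, so in fact $q,q'\in(1,\infty)$ — but the conclusion $(q,q')\neq(1,\infty)$ that you need already follows from $q'\neq\infty$, so the proof is unaffected.
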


\begin{cor}[Nonlinear estimates (weighted $L^{2}$-type)]
\label{cor:nonlinear-estimates-weightedL2}The following estimates
hold.
\begin{itemize}
\item (For $\calN_{3,1}$ and $\calN_{3,2}$) Let $w_{1},\dots,w_{3}:(0,\infty)\to\bbR_{+}$
be decreasing functions such that $\prod_{j=1}^{3}w_{3}(r)=\frac{1}{r^{2}}$.
Then, for any $k\in\{1,2\}$, we have 
\[
\|\calN_{3,k}(\psi_{1},\psi_{2},\psi_{3})\|_{L^{2}}\lesssim\prod_{j=1}^{3}\|w_{j}\psi_{j}\|_{L^{2}}.
\]
\item (For $\calN_{5,1}$ and $\calN_{5,2}$) Let $w_{1},\dots,w_{5}:(0,\infty)\to\bbR_{+}$
be decreasing functions such that $\prod_{j=1}^{5}w_{j}(r)=\frac{1}{r^{2}}$.
Then, for any $k\in\{1,2\}$, we have 
\[
\|\calN_{5,k}(\psi_{1},\psi_{2},\psi_{3})\|_{L^{2}}\lesssim\prod_{j=1}^{5}\|w_{j}\psi_{j}\|_{L^{2}}.
\]
\end{itemize}
\end{cor}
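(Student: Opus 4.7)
The strategy is purely dualistic: convert the $L^2$-norm of the nonlinearity into a multilinear pairing via duality, apply the identities \eqref{eq:duality-relations}, and then invoke Lemma \ref{lem:duality-estimates-weighted-L1} with appropriately grouped weights. Since these grouped weights must remain \emph{decreasing} and must satisfy the product constraint $\prod w = r^{-2}$, the only nontrivial bookkeeping is to match the single-factor weights $w_j$ in the corollary statement against the paired weights $w_{12},w_{34},\dots$ of the lemma, which I will do by taking products and peeling off $L^2$ factors with Cauchy--Schwarz.

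\textbf{Estimates for $\calN_{3,k}$.} By $L^2$-duality, for any $\psi_4$ with $\|\psi_4\|_{L^2}=1$, one has
\[
\|\calN_{3,k}(\psi_1,\psi_2,\psi_3)\|_{L^2}=\sup_{\|\psi_4\|_{L^2}=1}\big|(\calN_{3,k}(\psi_1,\psi_2,\psi_3),\psi_4)_r\big|.
\]
Using the second and third identities of \eqref{eq:duality-relations}, the pairing equals $2\calM_{4,1}(\psi_1,\psi_2,\psi_3,\psi_4)$ for $k=1$ and $2\calM_{4,1}(\psi_3,\psi_4,\psi_1,\psi_2)$ for $k=2$. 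Now set $\td w_{12}\coloneqq w_1 w_2$ and $\td w_{34}\coloneqq w_3$; these are decreasing (products of decreasing nonnegative functions) and satisfy $\td w_{12}\td w_{34}=w_1 w_2 w_3=r^{-2}$ by hypothesis. Apply Lemma \ref{lem:duality-estimates-weighted-L1} with this pair of weights and then Cauchy--Schwarz on each $L^1$-norm to obtain
\[
|\calM_{4,1}(\psi_1,\psi_2,\psi_3,\psi_4)|\lesssim\|w_1\psi_1\|_{L^2}\|w_2\psi_2\|_{L^2}\|w_3\psi_3\|_{L^2}\|\psi_4\|_{L^2}.
\]
The case $k=2$ is handled identically after relabeling. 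Taking the supremum over $\psi_4$ yields the desired bound.

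\textbf{Estimates for $\calN_{5,k}$.} The argument is the same. By the last two identities of \eqref{eq:duality-relations},
\[
(\calN_{5,1}(\psi_1,\ldots,\psi_5),\psi_6)_r=2\calM_6(\psi_1,\ldots,\psi_6),\qquad(\calN_{5,2}(\psi_1,\ldots,\psi_5),\psi_6)_r=4\calM_6(\psi_1,\psi_2,\psi_5,\psi_6,\psi_3,\psi_4).
\]
Set $\td w_{12}\coloneqq w_1 w_2$, $\td w_{34}\coloneqq w_3 w_4$, $\td w_{56}\coloneqq w_5$; these are decreasing with $\td w_{12}\td w_{34}\td w_{56}=r^{-2}$. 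Applying Lemma \ref{lem:duality-estimates-weighted-L1} for $\calM_6$ and then Cauchy--Schwarz gives
\[
|\calM_6(\psi_1,\ldots,\psi_6)|\lesssim\prod_{j=1}^{5}\|w_j\psi_j\|_{L^2}\cdot\|\psi_6\|_{L^2},
\]
and again the sup over $\psi_6$ closes the estimate. The case $k=2$ is treated by the same pairing after permuting the arguments.

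\textbf{Main technical point.} There is no substantive obstacle beyond this bookkeeping, since all analytic content is already absorbed into Lemma \ref{lem:duality-estimates-weighted-L1}. The one thing to double-check is that the products $w_i w_j$ of single-factor weights remain monotone decreasing, which is automatic from the hypothesis that each $w_j$ is decreasing and nonnegative, and that the remaining single weight assigned to the dualizing factor (trivially $\equiv 1$) does not disturb the identity $\prod\td w=r^{-2}$. Thus the corollary reduces to one application of Cauchy--Schwarz per factor paired with the relevant case of Lemma \ref{lem:duality-estimates-weighted-L1}.
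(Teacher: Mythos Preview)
Your proof is correct and is exactly the argument the paper has in mind: the paper simply states that the corollary follows from the duality relations \eqref{eq:duality-relations} together with Lemma~\ref{lem:duality-estimates-weighted-L1}, and your write-up supplies precisely those details (dualize against $\psi_4$ or $\psi_6$, group the weights into pairs, apply the weighted $L^1$ lemma, then Cauchy--Schwarz). The only thing worth noting explicitly for the $k=2$ cases is that the permutation in \eqref{eq:duality-relations} always places the dualizing function $\psi_4$ (resp.\ $\psi_6$) in the same pair as one of the original $\psi_j$'s, so the corresponding single weight $w_j$ can be assigned to that pair and the trivial weight $1$ to the dualizing function---you say this, and it is indeed the only point to check.
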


\section{\label{sec:Proof-of-Theorem-neg-equiv}Proof of Theorem \ref{thm:asymptotic-description}
when $m<0$}

In this short section, we prove Theorem \ref{thm:asymptotic-description}
when $m<0$. In this case, the only scenario for the long-term dynamics
is the scattering. We first show that \eqref{eq:CSS-m-equiv} is \emph{defocusing}
in the sense that the energy is globally coercive:
\begin{lem}[Nonlinear coercivity for $m<0$]
\label{lem:Nonlin-coer-neg-equiv}Let $m<0$. For any $u\in\dot{H}_{m}^{1}$,
we have 
\[
E[u]\sim_{M[u]}\|u\|_{\dot{H}_{m}^{1}}^{2}.
\]
In particular, there is no nontrivial finite energy solution to the
Bogomol'nyi equation \eqref{eq:Bogomol'nyi-eq} for $m<0$.
\end{lem}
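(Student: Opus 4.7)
The plan is to derive a Bogomol'nyi-type algebraic identity that makes the upper bound immediate, and to prove the lower bound by a contradiction-compactness argument closed with an ODE rigidity step that is only available for $m<0$.

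Starting from the self-dual form \eqref{eq:energy-self-dual-form} and expanding $|\partial_{r}u-\tfrac{m+A_{\theta}[u]}{r}u|^{2}$, the cross term can be integrated by parts using $\partial_{r}A_{\theta}[u]=-\tfrac{1}{2}|u|^{2}r$ to produce
\[
2E[u]=\|u\|_{\dot{H}_{m}^{1}}^{2}+2m\int\frac{A_{\theta}[u]}{r^{2}}|u|^{2}\,dx+\int\frac{A_{\theta}[u]^{2}}{r^{2}}|u|^{2}\,dx-\tfrac{1}{2}\|u\|_{L^{4}}^{4}.
\]
Because $A_{\theta}[u]\leq 0$ by \eqref{eq:def-A} and $m<0$, the two middle terms are both nonnegative. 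The upper bound $E[u]\lesssim_{M[u]}\|u\|_{\dot{H}_{m}^{1}}^{2}$ follows from this identity, the pointwise bound $|A_{\theta}[u]|\leq M[u]/(4\pi)$, Hardy's inequality \eqref{eq:HardySobolevSection2} (valid since $|m|\geq 1$), and the $2$D Gagliardo--Nirenberg estimate $\|u\|_{L^{4}}^{4}\lesssim M[u]\,\|u\|_{\dot{H}_{m}^{1}}^{2}$.

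For the lower bound I argue by contradiction. If it fails, then for some $M_{0}>0$ there is a sequence $u_{n}\in\dot{H}_{m}^{1}$ with $M[u_{n}]\leq M_{0}$, $\|u_{n}\|_{\dot{H}_{m}^{1}}=1$, and $E[u_{n}]\to 0$. Since $(u_{n})$ is bounded in $H_{m}^{1}$, the equivariant Strauss compactness gives a subsequence converging to some $u^{*}\in H_{m}^{1}$ weakly in $\dot{H}_{m}^{1}$, strongly in $L^{p}$ for every $p\in(2,\infty)$, and strongly in $L^{2}_{\mathrm{loc}}$; in particular $A_{\theta}[u_{n}](r)\to A_{\theta}[u^{*}](r)$ pointwise. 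Dropping the two nonnegative middle terms in the identity yields $2E[u_{n}]\geq 1-\tfrac{1}{2}\|u_{n}\|_{L^{4}}^{4}$, so strong $L^{4}$-convergence forces $\|u^{*}\|_{L^{4}}^{4}\geq 2$, and in particular $u^{*}\not\equiv 0$. Conversely, taking $\liminf$ in the identity using weak LSC of $\|\cdot\|_{\dot{H}_{m}^{1}}^{2}$ and Fatou's lemma on the two nonnegative integrals gives $0=\lim 2E[u_{n}]\geq 2E[u^{*}]$; combined with $E[u^{*}]\geq 0$ from the self-dual form this forces $E[u^{*}]=0$, i.e., $\mathbf{D}_{u^{*}}u^{*}=0$ a.e.

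The final contradiction comes from an ODE rigidity argument. Multiplying $\mathbf{D}_{u^{*}}u^{*}=0$ by $\overline{u^{*}}$ and taking real parts yields $\partial_{r}|u^{*}|^{2}=\tfrac{2}{r}(m+A_{\theta}[u^{*}])|u^{*}|^{2}\leq\tfrac{2m}{r}|u^{*}|^{2}$, so $\partial_{r}\log|u^{*}|^{2}\leq 2m/r$ wherever $u^{*}\neq 0$. Integrating backwards from any $r_{0}$ with $|u^{*}(r_{0})|>0$ gives $|u^{*}(r)|\geq |u^{*}(r_{0})|(r_{0}/r)^{|m|}\to\infty$ as $r\to 0^{+}$, which contradicts the $\dot{H}_{m}^{1}\hookrightarrow L^{\infty}$ embedding in \eqref{eq:HardySobolevSection2} available for $|m|\geq 1$. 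Hence $u^{*}\equiv 0$, contradicting $\|u^{*}\|_{L^{4}}^{4}\geq 2$. I expect this ODE rigidity step to be the main obstacle: it is specific to $m<0$ and is what guarantees the coercivity constant survives for arbitrarily large $M[u]$, replacing the role played by the Jackiw--Pi vortex in the $m\geq 0$ analysis.
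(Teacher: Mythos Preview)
Your proof is correct but takes a genuinely different route from the paper. The paper proves the lower bound by a direct pointwise argument: expanding $\int|\bfD_{u}u|^{2}r\,dr$ and integrating by parts the term $-2\Re(\overline{\rd_{r}u}\cdot\tfrac{m}{r}u)$ (which simply vanishes), one is left with the cross term $-2\Re(\overline{\rd_{r}u}\cdot\tfrac{A_{\theta}[u]}{r}u)$. The key elementary observation is that, since $m<0$ and $0\leq -A_{\theta}[u]\leq\tfrac{1}{4\pi}M[u]$, there is $c=c(M[u])>0$ with $|A_{\theta}[u]|\leq(1-c)|m+A_{\theta}[u]|$; Cauchy--Schwarz then absorbs the cross term into the two squares, yielding $E[u]\geq c\|u\|_{\dot{H}_{m}^{1}}^{2}$ in one line. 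By contrast, you run a compactness-contradiction argument that produces a nontrivial $H_{m}^{1}$ zero-energy profile and then rule it out by an ODE rigidity step (the blow-up of $|u^{\ast}(r)|$ as $r\to0^{+}$). Your argument is sound---the Fatou/LSC bookkeeping and the ODE step both check out---but it is softer and does not yield an explicit constant. More importantly, the paper's pointwise mechanism (the inequality $|A_{\theta}|\leq(1-c)|m+A_{\theta}|$) is exactly what is recycled later as the \emph{nonlinear Hardy inequality} in the exterior region for the $m\geq0$ coercivity (Lemma~\ref{lem:nonlinear-coercivity}), whereas your compactness approach does not localize in an obvious way. So while both proofs establish the lemma, the paper's method is shorter and feeds directly into the main coercivity argument of Section~\ref{sec:Proof-of-Theorem-pos-equiv}.
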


\begin{proof}
As the inequality $E[u]\lesssim_{M[u]}\|u\|_{\dot{H}_{m}^{1}}^{2}$
is obvious, we focus on the proof of the reverse inequality $E[u]\gtrsim_{M[u]}\|u\|_{\dot{H}_{m}^{1}}^{2}$.
By density, we may assume that $u$ is an $m$-equivariant Schwartz
function. In particular, $u(0)=0$. We note that 
\begin{align*}
\tint 0{\infty}|\bfD_{u}u|^{2}rdr & =\tint 0{\infty}|\rd_{r}u-\tfrac{m+A_{\theta}[u]}{r}u|^{2}rdr\\
 & =\tint 0{\infty}\{|\rd_{r}u|^{2}+\tfrac{(m+A_{\theta}[u])^{2}}{r^{2}}|u|^{2}-2\Re(\br{\rd_{r}u}\cdot\tfrac{A_{\theta}[u]}{r}u)\}rdr\\
 & \peq-\tint 0{\infty}2\Re(\br{\rd_{r}u}\cdot mu)dr.
\end{align*}
The last term \emph{vanishes}, thanks to integration by parts. Thus,
we have proved
\begin{equation}
\tint 0{\infty}|\bfD_{u}u|^{2}rdr=\tint 0{\infty}\{|\rd_{r}u|^{2}+\tfrac{(m+A_{\theta}[u])^{2}}{r^{2}}|u|^{2}-2\Re(\br{\rd_{r}u}\cdot\tfrac{A_{\theta}[u]}{r}u)\}rdr.\label{eq:NonlinHardy-negequiv1}
\end{equation}

The last term of RHS\eqref{eq:NonlinHardy-negequiv1} will be absorbed
into the sum of the first and second terms. Indeed, since $m<0$ and
$0\leq-A_{\theta}[u](r)\leq\frac{1}{4\pi}M[u]$, there exists $c=c(M[u])>0$
such that 
\[
|A_{\theta}[u]|\leq(1-c)|m+A_{\theta}[u]|.
\]
Thus the last term of RHS\eqref{eq:NonlinHardy-negequiv1} can be
estimated by
\begin{align*}
|\tint{}{}-2\Re(\br{\rd_{r}u}\cdot\tfrac{A_{\theta}[u]}{r}u)| & \leq(1-c)\tint{}{}|2\br{\rd_{r}u}\cdot\tfrac{|m+A_{\theta}[u]|}{r}u|\\
 & \leq(1-c)\tint{}{}\{|\rd_{r}u|^{2}+\tfrac{(m+A_{\theta}[u])^{2}}{r^{2}}|u|^{2}\}rdr.
\end{align*}
Substituting this into \eqref{eq:NonlinHardy-negequiv1}, we have
\[
E[u]=\tfrac{1}{2}\tint{}{}|\bfD_{u}u|^{2}\geq c\tint{}{}\{|\rd_{r}u|^{2}+\tfrac{(m+A_{\theta}[u])^{2}}{r^{2}}|u|^{2}\}\geq c\|u\|_{\dot{H}_{m}^{1}}^{2},
\]
completing the proof.
\end{proof}
As is standard, the coercivity of energy directly implies the scattering
for all $H_{m}^{1,1}$-solutions via the pseudoconformal transform.
\begin{proof}[Proof of Theorem \ref{thm:asymptotic-description} when $m<0$]
Let $m<0$. Suppose that there is a non-scattering maximal $H_{m}^{1,1}$-solution
$u$ to \eqref{eq:CSS-m-equiv}. By the time-reversal and time-translational
symmetry, we may assume that $u$ is defined on $[1,T_{+})$ and is
non-scattering forwards in time, where $T_{+}\in(1,+\infty]$ is the
forward maximal time of existence.

If $T_{+}<+\infty$, then $u$ is a finite-time blow-up solution with
finite energy. The standard blow-up criterion (as a consequence of
the $H_{m}^{1}$-subcritical local well-posedness) says that $\|u(t)\|_{\dot{H}_{m}^{1}}\to\infty$
as $t\to T_{+}$. This is inconsistent with the nonlinear coercivity
(Lemma \ref{lem:Nonlin-coer-neg-equiv}) and the conservation of energy.

If $T_{+}=+\infty$ but $u$ does not scatter, then the standard equivariant
$L^{2}$-Cauchy theory \cite{LiuSmith2016} says that $u$ has infinite
$L_{t,x}^{4}$-norm
\[
\|u\|_{L_{t,x}^{4}([1,+\infty)\times\bbR^{2})}=+\infty.
\]
Let $v\coloneqq\mathcal{C}u$ be the pseudoconformal transform of
$u$ (see \eqref{eq:def-pseudoconf}). Note that $v$ is defined on
the time interval $[-1,0)$ (having well-defined extension past the
time $t=-1$). Moreover, since the pseudoconformal transform preserves
the space $H_{m}^{1,1}$ as well as the $L_{t,x}^{4}$-norm of the
solution, we see that $v$ is a $H_{m}^{1,1}$-solution with 
\[
\|v\|_{L_{t,x}^{4}([-1,0)\times\bbR^{2})}=+\infty,
\]
meaning that $t=0$ is the forward maximal time of existence. In particular,
$v$ blows up at $t=0$. This is impossible due to the previous paragraph.
This completes the proof.
\end{proof}

\section{\label{sec:Proof-of-Theorem-pos-equiv}Proof of Theorem \ref{thm:asymptotic-description}
when $m\protect\geq0$}

In this section, we prove Theorem \ref{thm:asymptotic-description}
when $m\geq0$. As before, we first reduce the proof of Theorem \ref{thm:asymptotic-description}
to the case of finite-time blow-up solutions.
\begin{proof}[Proof of Theorem \ref{thm:asymptotic-description} for global solutions
assuming the finite-time blow-up case.]
\ 

Assume that $u$ is a $H_{m}^{1,1}$-solution on the time interval
$[1,+\infty)$. If $u$ scatters forwards in time, then there is nothing
to prove. Suppose that $u$ does not scatter forwards in time. Similarly
as in the proof for the $m<0$ case (see the previous section), the
pseudoconformal transformed solution $v\coloneqq\mathcal{C}u$ becomes
a $H_{m}^{1,1}$ finite-time blow-up solution that blows up at $t=0$.
According to Theorem \ref{thm:asymptotic-description} for the finite-time
blow-up case, $v$ admits the decomposition 
\[
v(t)-Q_{\lmb(t),\gmm(t)}\to z^{\ast}\text{ in }L^{2}\text{ as }t\to0^{-},
\]
with $\lmb(t),\gmm(t),z^{\ast}$ satisfying the properties stated
in Theorem \ref{thm:asymptotic-description}. Since $\rd_{r}z^{\ast},\frac{1}{r}z^{\ast},rz^{\ast}\in L^{2}$,
we can view $z^{\ast}$ as a radial part of a $H_{-m-2}^{1,1}$ function.
We rewrite the above decomposition as 
\[
v(t)-Q_{\lmb(t),\gmm(t)}-z_{\mathrm{lin}}(t)\to0\text{ in }L^{2}\text{ as }t\to0^{-},
\]
where $z_{\mathrm{lin}}(t)\coloneqq e^{it\Delta^{(-m-2)}}z^{\ast}$.

Inverting the pseudoconformal transform, we have 
\begin{equation}
u(t)-e^{i\frac{r^{2}}{4t}}Q_{\wh{\lmb}(t),\wh{\gmm}(t)}-[\mathcal{C}z_{\mathrm{lin}}](t)\to0\text{ in }L^{2}\text{ as }t\to+\infty,\label{eq:thm-neg-equiv-1}
\end{equation}
where $\wh{\lmb}(t)\coloneqq t\lmb(-1/t)$ and $\wh{\gmm}(t)\coloneqq\gmm(-1/t)$.
In particular, \eqref{eq:thm1-lmb-upper-bound}-\eqref{eq:thm1-lmb-upper-bound-radial}
for $\lmb$ implies \eqref{eq:thm1-lmb-global-bound}-\eqref{eq:thm1-lmb-global-bound-radial}
for $\wh{\lmb}$. Combining the facts $Q\in L^{2}$ and $\wh{\lmb}(t)\lesssim1$
with the DCT, we can replace the pseudoconformal factor $e^{i\frac{r^{2}}{4t}}$
in the display \eqref{eq:thm-neg-equiv-1} by $1$. Finally, since
$z_{\mathrm{lin}}$ is a $H_{-m-2}^{1,1}$-solution to the $(-m-2)$-equivariant
free Schrödinger equation, so is $\mathcal{C}z_{\mathrm{lin}}$. In
other words, $\mathcal{C}z_{\mathrm{lin}}=e^{it\Delta^{(-m-2)}}u^{\ast}$
for some $u^{\ast}\in H_{-m-2}^{1,1}$ and further regularities $\rd_{r}u^{\ast},\frac{1}{r}u^{\ast},ru^{\ast}\in L^{2}$
follow.
\end{proof}
The rest of this section is devoted to the proof of Theorem \ref{thm:asymptotic-description}
for the finite-time blow-up case.

\subsection{Decomposition of small energy solutions}

Let $u$ be a finite-time blow-up solution with finite energy $E$.
By the standard Cauchy theory of \eqref{eq:CSS-m-equiv}, we have
$\|u(t)\|_{\dot{H}_{m}^{1}}\to\infty$ as $t\to T$ with $T$ the
blow-up time of $u$, whereas $E[u(t)]=E$ by conservation of energy.
Renormalizing $u(t)$, i.e., introducing $v(t,r)\coloneqq\wh{\lmb}(t)u(t,\wh{\lmb}(t)r)$
with $\wh{\lmb}(t)=\|Q\|_{\dot{H}_{m}^{1}}/\|u(t)\|_{\dot{H}_{m}^{1}}$,
we have $\|v(t)\|_{\dot{H}_{m}^{1}}=\|Q\|_{\dot{H}_{m}^{1}}$ and
$E[v(t)]\to0$.

Since we know that $E[w]=0$ if and only if $w=0$ or $w$ is a modulated
soliton, it is natural to expect that $v(t)$ is in some sense near
$Q$ (modulo phase rotation). This is done in Lemma \ref{lem:orbital-stability-small-energy}
below in a qualitative way. In practice, we further need to quantify
this proximity to $Q$. So we will fix the decomposition of $u$ into
\[
u=[Q+\eps]_{\lambda,\gamma}
\]
by imposing suitable orthogonality conditions on $\eps$ (Lemma \ref{lem:dec-near-Q}),
and then quantify the smallness of $\eps$ in terms of the energy
$E$ (Lemma \ref{lem:nonlinear-coercivity}). This motivates the following
proposition.
\begin{prop}[Decomposition]
\label{prop:Decomposition}Let $m\geq0$; let $\calZ_{1},\calZ_{2}\in C_{c,m}^{\infty}$
be the profiles as in \eqref{eq:Z1Z2-transversality}. For any $M>1$,
there exist $0<\alpha^{\ast}\ll\eta\ll1$ such that the following
properties hold for all $u\in H_{m}^{1}$ with $\|u\|_{L^{2}}\leq M$
satisfying the small energy condition $\sqrt{E[u]}\leq\alpha^{\ast}\|u\|_{\dot{H}_{m}^{1}}$:
\begin{enumerate}
\item (Decomposition) there exists unique $(\lambda,\gamma)\in\bbR_{+}\times\bbR/2\pi\bbZ$
such that $\eps\in H_{m}^{1}$ defined by the relation 
\[
u=[Q+\eps]_{\lmb,\gmm}
\]
satisfies the orthogonality conditions 
\begin{equation}
(\eps,\calZ_{1})_{r}=(\eps,\calZ_{2})_{r}=0\label{eq:decomp-orthogonality}
\end{equation}
and smallness 
\begin{equation}
\|\eps\|_{\dot{\calH}_{m}^{1}}<\eta.\label{eq:decomp-smallness}
\end{equation}
\item (Estimate for $\lmb$) We have 
\begin{equation}
\Big|\frac{\|u\|_{\dot{H}_{m}^{1}}}{\|Q\|_{\dot{H}_{m}^{1}}}\lmb-1\Big|\lesssim\|\eps\|_{\dot{H}_{m}^{1}}.\label{eq:decomp-lmb-est}
\end{equation}
\item (Improved smallness of $\eps$) We have 
\begin{equation}
\|\eps\|_{\dot{\calH}_{m}^{1}}\sim_{M}\lambda\sqrt{E[u]}.\label{eq:decomp-nonlinear-coercivity}
\end{equation}
\end{enumerate}
\end{prop}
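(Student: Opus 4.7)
\textbf{Proof plan for Proposition~\ref{prop:Decomposition}.}

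The plan is a three-step modulation argument: (a) variational renormalization to reduce to a small-energy setting, (b) an implicit function theorem argument to enforce the orthogonality \eqref{eq:decomp-orthogonality}, and (c) the nonlinear coercivity of the energy to sharpen the smallness to \eqref{eq:decomp-nonlinear-coercivity}. Throughout one exploits the $L^{2}$-criticality of \eqref{eq:CSS-m-equiv}, which dictates the scaling laws $\|u_{\lambda}\|_{\dot{H}_{m}^{1}} = \lambda^{-1}\|u\|_{\dot{H}_{m}^{1}}$, $\|u_{\lambda}\|_{L^{2}} = \|u\|_{L^{2}}$, and $E[u_{\lambda}] = \lambda^{-2} E[u]$.

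\emph{Step 1 (Variational renormalization).} Set $\mu \coloneqq \|Q\|_{\dot{H}_{m}^{1}}/\|u\|_{\dot{H}_{m}^{1}}$ and $v(r) \coloneqq \mu u(\mu r)$. Then $\|v\|_{L^{2}} = \|u\|_{L^{2}} \leq M$, $\|v\|_{\dot{H}_{m}^{1}} = \|Q\|_{\dot{H}_{m}^{1}}$, and $E[v] = \mu^{2} E[u] \leq (\alpha^{\ast})^{2}\|Q\|_{\dot{H}_{m}^{1}}^{2}$, which is arbitrarily small for $\alpha^{\ast}$ small. The orbital stability/variational lemma (Lemma~\ref{lem:orbital-stability-small-energy}, whose proof rests on the classification $E[w]=0 \iff w \in \{0\} \cup \{Q_{\lambda,\gamma}\}$ together with a profile-decomposition argument using the mass bound $M$) then yields some $\gamma_{0} \in \bbR/2\pi\bbZ$ with $\|v - e^{i\gamma_{0}}Q\|_{\dot{\calH}_{m}^{1}}$ as small as desired; equivalently, $u^{\flat}(y) = \lambda e^{-i\gamma}u(\lambda y)$ is close to $Q$ in $\dot{\calH}_{m}^{1}$ at the base point $(\lambda,\gamma) = (\mu, \gamma_{0})$.

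\emph{Step 2 (IFT for the orthogonality).} Define $F \colon \bbR_{+} \times \bbR/2\pi\bbZ \to \bbR^{2}$ by
\[
F(\lambda,\gamma) \coloneqq \bigl((u^{\flat} - Q, \calZ_{1})_{r},\ (u^{\flat} - Q, \calZ_{2})_{r}\bigr).
\]
A direct computation yields $\partial_{\lambda} u^{\flat} = \lambda^{-1}\Lambda u^{\flat}$ and $\partial_{\gamma} u^{\flat} = -i u^{\flat}$. At the hypothetical base point where $u^{\flat} = Q$, the Jacobian is
\[
DF = \begin{pmatrix} \mu^{-1}(\Lambda Q, \calZ_{1})_{r} & -(iQ, \calZ_{1})_{r} \\ \mu^{-1}(\Lambda Q, \calZ_{2})_{r} & -(iQ, \calZ_{2})_{r} \end{pmatrix},
\]
whose determinant equals $-\mu^{-1}$ times the transversality determinant \eqref{eq:Z1Z2-transversality}, hence is nonzero. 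A quantitative implicit function theorem combined with Step~1 and continuity of $DF$ in $u^{\flat}$ on $\dot{\calH}_{m}^{1}$ produces a unique $(\lambda,\gamma)$ near $(\mu,\gamma_{0})$ solving $F(\lambda,\gamma) = 0$; for $\alpha^{\ast} \ll \eta$, the resulting $\eps = u^{\flat} - Q$ satisfies \eqref{eq:decomp-smallness}, giving part~(1). Part~(2) is then immediate from the scaling identity $\|u\|_{\dot{H}_{m}^{1}} = \lambda^{-1}\|Q + \eps\|_{\dot{H}_{m}^{1}}$ together with the reverse triangle inequality $\bigl|\|Q + \eps\|_{\dot{H}_{m}^{1}} - \|Q\|_{\dot{H}_{m}^{1}}\bigr| \leq \|\eps\|_{\dot{H}_{m}^{1}}$.

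\emph{Step 3 (Nonlinear coercivity of energy).} Via $\lambda^{2} E[u] = E[Q + \eps]$, estimate \eqref{eq:decomp-nonlinear-coercivity} is equivalent to $\|\eps\|_{\dot{\calH}_{m}^{1}}^{2} \sim_{M} E[Q + \eps]$. The upper bound follows by expanding the self-dual form \eqref{eq:energy-self-dual-form}: by \eqref{eq:LinearizationBogomolnyi}--\eqref{eq:linearized-energy-expn} one has $E[Q + \eps] = \tfrac{1}{2}\|L_{Q}\eps\|_{L^{2}}^{2} + \text{(cubic and higher in }\eps\text{)}$, with $\|L_{Q}\eps\|_{L^{2}}^{2} \lesssim \|\eps\|_{\dot{\calH}_{m}^{1}}^{2}$ by Lemma~\ref{lem:linear-coercivity}, and the higher-order tails bounded by $C(M)\|\eps\|_{\dot{\calH}_{m}^{1}}^{2}$ using Corollaries~\ref{cor:nonlinear-estimates-holder}--\ref{cor:nonlinear-estimates-weightedL2} together with Gagliardo--Nirenberg and the mass bound $\|Q + \eps\|_{L^{2}} \leq M$. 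The matching lower bound, Lemma~\ref{lem:nonlinear-coercivity}, is the heart of the argument and the main obstacle: since $\|\eps\|_{L^{2}}$ is only bounded by $M$ (possibly large), the quintic and higher-order terms in $E[Q + \eps]$ are \emph{not} perturbative, so a naive application of the linear coercivity of $L_{Q}$ is insufficient. Following the strategy laid out in the introduction, one splits $\eps = \chi_{R}\eps + (1 - \chi_{R})\eps$ at an $M$-dependent scale $R$; the interior piece is controlled by a localized Lemma~\ref{lem:linear-coercivity}, while the exterior piece is handled by a non-linear Hardy-type inequality exploiting the sign $m + A_{\theta}[Q + \eps] < 0$ at scales $r \gg 1$---the exterior problem then mimics the defocusing negative-equivariance setting of Lemma~\ref{lem:Nonlin-coer-neg-equiv}. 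This is where self-duality and non-locality of \eqref{eq:CSS-m-equiv} are indispensable, and thus where the bulk of the work lies.
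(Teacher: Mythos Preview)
Your proposal is correct and follows essentially the same three-lemma structure as the paper: variational renormalization (Lemma~\ref{lem:orbital-stability-small-energy}), implicit-function-theorem decomposition (Lemma~\ref{lem:dec-near-Q}), and the nonlinear coercivity of energy (Lemma~\ref{lem:nonlinear-coercivity}), with the same interior/exterior splitting and nonlinear Hardy inequality for the latter. The only point you gloss over is that the IFT gives uniqueness of $(\lambda,\gamma)$ only \emph{near} $(\mu,\gamma_{0})$, whereas the proposition asserts uniqueness among all $(\lambda,\gamma)$ with $\|\eps\|_{\dot{\calH}_{m}^{1}}<\eta$; the paper handles this separately by showing that any two such decompositions must have parameters within the IFT's local uniqueness neighborhood.
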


The rest of this subsection is devoted to the proof of Proposition
\ref{prop:Decomposition}. The proof is separated into three lemmas.

Firstly, we show that the smallness of the ratio $\sqrt{E[u]}/\|u\|_{\dot{H}_{m}^{1}}$
implies that $u$ is close to a modulated soliton in the $\dot{\calH}_{m}^{1}$-topology:
\begin{lem}[Orbital stability for small energy solutions]
\label{lem:orbital-stability-small-energy}For any $M>1$ and $\delta>0$,
there exists $\alpha^{\ast}>0$ such that the following holds. Let
$u\in H_{m}^{1}$ be a nonzero profile satisfying $\|u\|_{L^{2}}\leq M$
and the small energy condition $\sqrt{E[u]}\leq\alpha^{\ast}\|u\|_{\dot{H}_{m}^{1}}$.
Then, there exists $\wh{\gmm}\in\bbR/2\pi\bbZ$ such that 
\[
\|e^{-i\wh{\gamma}}\wh{\lmb}u(\wh{\lmb}\cdot)-Q\|_{\dot{\calH}_{m}^{1}}<\delta,
\]
where $\wh{\lmb}\coloneqq\|Q\|_{\dot{H}_{m}^{1}}/\|u\|_{\dot{H}_{m}^{1}}$.
\end{lem}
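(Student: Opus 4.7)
The plan is to argue by contradiction, extracting a compactness profile and exploiting the self-dual structure of the energy together with the Jackiw--Pi classification of zero-energy solutions.

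Suppose the statement fails: there is a sequence $(u_n) \subset H_m^1$ with $\|u_n\|_{L^2} \le M$ and $\sqrt{E[u_n]}/\|u_n\|_{\dot{H}_m^1} \to 0$ such that the renormalizations $v_n \coloneqq \wh{\lmb}_n u_n(\wh{\lmb}_n \cdot)$ satisfy $\inf_{\gmm} \|e^{-i\gmm} v_n - Q\|_{\dot{\calH}_m^1} \ge \delta$. By construction $\|v_n\|_{\dot{H}_m^1} = \|Q\|_{\dot{H}_m^1}$, $\|v_n\|_{L^2} \le M$, and $E[v_n] = \wh{\lmb}_n^2 E[u_n] \to 0$, so by \eqref{eq:energy-self-dual-form}, $\|\bfD_{v_n} v_n\|_{L^2}^2 = 2 E[v_n] \to 0$. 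Since $(v_n)$ is bounded in $H_m^1$, the Strauss-type compactness for equivariant Sobolev functions on $\bbR^2$ gives, along a subsequence, $v_n \weakto v_\infty$ weakly in $H_m^1$ and strongly in $L^p$ for every $p \in (2,\infty)$. The strong $L^4$-convergence yields pointwise convergence $A_\theta[v_n] \to A_\theta[v_\infty]$ with uniform bound, so combined with weak convergence of $\rd_r v_n$ one passes to the limit to obtain $\bfD_{v_\infty} v_\infty = 0$. Hence $v_\infty$ is either $0$ or a Jackiw--Pi vortex $Q_{\lmb_\infty, \gmm_\infty}$.

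In the case $v_\infty = 0$, Cauchy--Schwarz on the defining integral of $A_\theta$ gives $\|A_\theta[v_n]/r\|_{L^\infty} \lesssim \|v_n\|_{L^4}^2 \to 0$, whence $\|(A_\theta[v_n]/r) v_n\|_{L^2} \to 0$ and therefore $\|\rd_r v_n - (m/r) v_n\|_{L^2} \to 0$. For $m \ge 0$ and $m$-equivariant $v$, an integration by parts (with the cross term vanishing either because $m = 0$ or because $v$ vanishes at both $r = 0, \infty$) gives $\|\rd_r v - (m/r) v\|_{L^2}^2 = \|v\|_{\dot{H}_m^1}^2$, contradicting the fixed normalization $\|v_n\|_{\dot{H}_m^1} = \|Q\|_{\dot{H}_m^1}$.

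In the case $v_\infty = Q_{\lmb_\infty, \gmm_\infty}$, write $v_n = v_\infty + w_n$ with $w_n \weakto 0$ in $H_m^1$ and $w_n \to 0$ in $L^p$ for all $p > 2$. The linearization \eqref{eq:LinearizationBogomolnyi} gives $\bfD_{v_n} v_n = L_{v_\infty} w_n + \mathcal{R}(w_n)$, where each term of $\mathcal{R}(w_n)$ is at least quadratic in $w_n$ and carries an $A_\theta$-factor; Cauchy--Schwarz in the $A_\theta$-integrals combined with the $L^4$-smallness of $w_n$ yields $\|\mathcal{R}(w_n)\|_{L^2} \to 0$, so $L_{v_\infty} w_n \to 0$ in $L^2$. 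Applying the scaled/rotated version of Lemma \ref{lem:linear-coercivity} for $L_{v_\infty}$, together with $(w_n, \td{\calZ}_i)_r \to 0$ by weak convergence (for test functions $\td{\calZ}_i \in C_{c,m}^\infty$ transversal to $(\Lambda v_\infty, i v_\infty)$), we obtain $\|w_n\|_{\dot{\calH}_m^1} \to 0$. Thus $v_n \to v_\infty$ strongly in $\dot{\calH}_m^1 \embed \dot{H}_m^1$, and the fixed norm $\|v_n\|_{\dot{H}_m^1} = \|Q\|_{\dot{H}_m^1}$ forces $\lmb_\infty = 1$, so $v_n \to e^{i\gmm_\infty} Q$ in $\dot{\calH}_m^1$, contradicting the assumption. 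The main technical obstacle is in this last case: verifying $\|\mathcal{R}(w_n)\|_{L^2} \to 0$ under the available strong $L^p$-convergence ($p > 2$), and transferring the coercivity of $L_Q$ to $L_{v_\infty}$ in the $\dot{\calH}_m^1$-topology (especially when $m = 0$, where this topology carries a logarithmic weight at the origin).
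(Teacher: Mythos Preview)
Your proof is correct and follows the same contradiction/compactness/classification skeleton as the paper, but with two tactical variations worth noting. To rule out $v_\infty=0$, the paper shows $\|v_\infty\|_{L^4}>0$ via the Coulomb-form energy identity and an external estimate $\|\bfD_x w\|_{L^2}\sim_M\|w\|_{\dot H_m^1}$ of Li--Liu; your argument via $\|A_\theta[v_n]/r\|_{L^\infty}\lesssim\|v_n\|_{L^4}^2\to 0$ and the exact identity $\|\rd_r v-(m/r)v\|_{L^2}=\|v\|_{\dot H_m^1}$ is more self-contained. For the upgrade to strong $\dot{\calH}_m^1$-convergence, the paper expands $E[Q+\td w_n]$ and invokes a \emph{subcoercivity} estimate (roughly $\|f\|_{\dot{\calH}_m^1}^2\lesssim\|L_Q f\|_{L^2}^2+\|f\|_{L^p}^2$ for some $p>2$) from the appendices of its companion papers, which directly accommodates $\td w_n\to 0$ in $L^p$ without any orthogonality. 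Your route through Lemma~\ref{lem:linear-coercivity} needs exact orthogonality to $\td{\calZ}_i$, so you must first split off the $\mathrm{span}\{\Lambda v_\infty,iv_\infty\}$-component of $w_n$ (whose coefficients vanish with $(w_n,\td{\calZ}_i)_r$) before applying coercivity to the remainder; this is routine, and the scale-dependence of the logarithmic weight in $\dot{\calH}_0^1$ is harmless since $\lmb_\infty$ is a fixed constant.
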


\begin{proof}
In view of scaling symmetry, we may assume $\wh{\lmb}=1$.

Suppose not. Then, there exist $\eta>0$ and a sequence $\{w_{n}\}_{n\in\bbN}$
in $H_{m}^{1}$ such that 
\[
E[w_{n}]\to0,\quad\|w_{n}\|_{L^{2}}\leq M,\quad\|w_{n}\|_{\dot{H}_{m}^{1}}=\|Q\|_{\dot{H}_{m}^{1}},
\]
and 
\begin{equation}
\|w_{n}-e^{i\gamma}Q\|_{\dot{\calH}_{m}^{1}}\geq\eta\text{ for any }n\in\bbN\text{ and }\gamma\in\bbR/2\pi\bbZ.\label{eq:coercivity-tmp1}
\end{equation}
Passing to a subsequence, and using compact embeddings, we may assume
that 
\begin{align*}
w_{n} & \weakto w_{\infty}\text{ in }H_{m}^{1},\\
w_{n} & \to w_{\infty}\text{ in }L^{p}\text{ for any }p\in(2,\infty),
\end{align*}
for some $w_{\infty}\in H_{m}^{1}$.

We show that $w_{\infty}$ cannot be zero. Indeed, using \cite[Lemma 3.1]{LiLiu2020arXiv}
\[
\|\bfD_{x}w_{n}\|_{L^{2}}^{2}\coloneqq\|\rd_{r}w_{n}\|_{L^{2}}^{2}+\|\tfrac{m+A_{\theta}[w_{n}]}{r}w_{n}\|_{L^{2}}^{2}\sim_{M}\|w_{n}\|_{\dot{H}_{m}^{1}}^{2},
\]
the definition \eqref{eq:energy-Coulomb-form} of energy, and $E[w_{n}]\to0$,
we have 
\begin{align*}
\|w_{\infty}\|_{L^{4}}^{4}=\lim_{n\to\infty}\|w_{n}\|_{L^{4}}^{4} & =\lim_{n\to\infty}(-4E[w_{n}]+2\|\bfD_{x}w_{n}\|_{L^{2}}^{2})\\
 & =\lim_{n\to\infty}2\|\bfD_{x}w_{n}\|_{L^{2}}^{2}\gtrsim_{M}\liminf_{n\to\infty}\|w_{n}\|_{\dot{H}_{m}^{1}}^{2}=\|Q\|_{\dot{H}_{m}^{1}}^{2}.
\end{align*}
Thus $w_{\infty}\neq0$.

We now show that $w_{\infty}=Q_{\lambda,\gamma}$ for some $\lambda\in(0,\infty)$
and $\gamma\in\bbR/2\pi\bbZ$. Indeed, on one hand, $E[w_{n}]\to0$
implies that $\bfD_{w_{n}}w_{n}\to0$ in $L^{2}$. On the other hand,
\cite[Lemma 3.2]{LiLiu2020arXiv} says that $\bfD_{w_{n}}w_{n}\weakto\bfD_{w_{\infty}}w_{\infty}$.
Therefore, we have $\bfD_{w_{\infty}}w_{\infty}=0$, which combined
with $w_{\infty}\neq0$ and the uniqueness of zero energy solutions
implies that $w_{\infty}=Q_{\lambda,\gamma}$ for some $\lambda\in(0,\infty)$
and $\gamma\in\bbR/2\pi\bbZ$.

Next, we show that $w_{n}\to Q_{\lambda,\gamma}$ in $\dot{\calH}_{m}^{1}$.
Let us write $w_{n}=[Q+\td w_{n}]_{\lambda,\gamma}$. Note that $\td w_{n}\weakto0$
in $H_{m}^{1}$ and $\td w_{n}\to0$ in $L^{p}$ for any $p\in(2,\infty)$.
Expanding the expression $E[Q+\td w_{n}]=\lambda^{-2}E[w_{n}]\to0$
using \eqref{eq:linearized-energy-expn} and applying the duality
estimate (Lemma \ref{lem:duality-estimates-Holder}), we see that
\[
\tfrac{1}{2}\|L_{Q}\td w_{n}\|_{L^{2}}^{2}=E[Q+\td w_{n}]+O_{M}(\|\td w_{n}\|_{L^{4}}^{3})\to0.
\]
Combining this with the subcoercivity estimate (see \cite[Lemma A.5]{KimKwon2020arXiv}
for $m\geq1$ and \cite[Lemma A.3]{KimKwonOh2020arXiv} for $m=0$)
and $\td w_{n}\to0$ in $L^{p}$ for some $p\in(2,\infty)$, we conclude
that $\td w_{n}\to0$ in $\dot{\calH}_{m}^{1}$. This shows $w_{n}\to Q_{\lambda,\gamma}$
in $\dot{\calH}_{m}^{1}$.

We are now ready to derive a contradiction. Note that $\lambda=1$
because 
\[
\|Q\|_{\dot{H}_{m}^{1}}=\lim_{n\to\infty}\|w_{n}\|_{\dot{H}_{m}^{1}}=\|Q_{\lambda,\gamma}\|_{\dot{H}_{m}^{1}}=\lambda^{-1}\|Q\|_{\dot{H}_{m}^{1}}.
\]
Thus $w_{n}\to e^{i\gamma}Q$ in $\dot{\calH}_{m}^{1}$, contradicting
\eqref{eq:coercivity-tmp1}. This completes the proof.
\end{proof}
Having established that $u$ is close to a modulated soliton, we fix
the decomposition $u=[Q+\eps]_{\lmb,\gmm}$ by imposing the orthogonality
conditions \eqref{eq:decomp-orthogonality}. In fact, we prove the
following:
\begin{lem}[Decomposition near $Q$]
\label{lem:dec-near-Q}For $\delta>0$, let us denote by $\mathcal{T}_{\delta}$
the set of $u\in\dot{\calH}_{m}^{1}$ satisfying 
\begin{equation}
\inf_{\lmb'\in\bbR_{+},\,\gmm'\in\bbR/2\pi\bbZ}\|u_{(\lmb')^{-1},-\gmm'}-Q\|_{\dot{\calH}_{m}^{1}}<\delta.\label{eq:dec-tmp}
\end{equation}
For any sufficiently small $\eta>0$, there exists $\delta>0$ such
that the following hold for all $u\in\mathcal{T}_{\delta}$:
\begin{enumerate}
\item There exists unique $(\lmb,\gmm)\in\bbR_{+}\times\bbR/2\pi\bbZ$ such
that $u$ admits the decomposition 
\[
u=[Q+\eps]_{\lmb,\gmm}
\]
satisfying the orthogonality conditions \eqref{eq:decomp-orthogonality}
and smallness $\|\eps\|_{\dot{\calH}_{m}^{1}}<\eta$ (which is \eqref{eq:decomp-smallness}).
\item Moreover, the estimate \eqref{eq:decomp-lmb-est} for $\lmb$ holds.
\end{enumerate}
\end{lem}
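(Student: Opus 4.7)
\textbf{Proof proposal for Lemma \ref{lem:dec-near-Q}.}

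My plan is to apply a quantitative implicit function theorem (or, equivalently, a contraction mapping argument) to solve the two orthogonality conditions \eqref{eq:decomp-orthogonality} in the two modulation parameters $(\lmb,\gmm)$. Given $u\in\mathcal{T}_{\delta}$, first pick $(\lmb_{0},\gmm_{0})$ realizing the infimum \eqref{eq:dec-tmp} up to a factor $2$, so that $\|\td u-Q\|_{\dot{\calH}_{m}^{1}}<2\delta$ where $\td u\coloneqq u_{\lmb_{0}^{-1},-\gmm_{0}}$. The remaining freedom is to perturb $(\lmb_{0},\gmm_{0})$ by a small amount; writing $\lmb=e^{\alpha}\lmb_{0}$, $\gmm=\gmm_{0}+\beta$, the residual $\eps$ becomes $\eps=\td u_{e^{-\alpha},-\beta}-Q$. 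I then define
\[
F(\alpha,\beta)\coloneqq\bigl((\td u_{e^{-\alpha},-\beta}-Q,\calZ_{1})_{r},(\td u_{e^{-\alpha},-\beta}-Q,\calZ_{2})_{r}\bigr),
\]
which depends smoothly on $(\alpha,\beta)$ since $\calZ_{1},\calZ_{2}\in C_{c,m}^{\infty}$. The goal is to solve $F(\alpha,\beta)=0$ for small $(\alpha,\beta)$.

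Next, I linearize. Differentiating $\td u_{e^{-\alpha},-\beta}(y)=e^{\alpha}e^{-i\beta}\td u(e^{\alpha}y)$ at $(\alpha,\beta)=(0,0)$ gives $\partial_{\alpha}=\Lambda\td u$ and $\partial_{\beta}=-i\td u$. Thus the Jacobian of $F$ at $(0,0)$ evaluated when $\td u=Q$ is
\[
\mathrm{Jac}_{F}(0,0)\Big|_{\td u=Q}=\begin{pmatrix}(\Lambda Q,\calZ_{1})_{r} & (-iQ,\calZ_{1})_{r}\\ (\Lambda Q,\calZ_{2})_{r} & (-iQ,\calZ_{2})_{r}\end{pmatrix},
\]
whose determinant is nonzero by the transversality condition \eqref{eq:Z1Z2-transversality}. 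Because $\td u$ is $\dot{\calH}_{m}^{1}$-close to $Q$ and the pairings $(\cdot,\calZ_{j})_{r}$ are continuous on $\dot{\calH}_{m}^{1}$ (as $\calZ_{j}\in C_{c,m}^{\infty}$), the Jacobian at $(0,0)$ for $\td u$ itself is an $O(\delta)$-perturbation of the invertible matrix above. A quantitative inverse function theorem (or Banach fixed point on the map $(\alpha,\beta)\mapsto(\alpha,\beta)-[\mathrm{Jac}_{F}(0,0)]^{-1}F(\alpha,\beta)$) then yields, for $\delta$ sufficiently small relative to $\eta$, a unique $(\alpha,\beta)$ with $|\alpha|+|\beta|\lesssim\delta$ solving $F(\alpha,\beta)=0$. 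Setting $(\lmb,\gmm)\coloneqq(e^{\alpha}\lmb_{0},\gmm_{0}+\beta)$ and $\eps\coloneqq\td u_{e^{-\alpha},-\beta}-Q$ realizes the decomposition, and continuity of the action $(\alpha,\beta)\mapsto\td u_{e^{-\alpha},-\beta}$ on $\dot{\calH}_{m}^{1}$ together with $\|\td u-Q\|_{\dot\calH_{m}^{1}}<2\delta$ gives $\|\eps\|_{\dot{\calH}_{m}^{1}}<\eta$ after shrinking $\delta$ further. Uniqueness of $(\lmb,\gmm)$ within the $\eta$-regime follows from the local uniqueness in the IFT applied in a possibly larger (but fixed) neighborhood.

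For the estimate \eqref{eq:decomp-lmb-est}, I simply note that $u=[Q+\eps]_{\lmb,\gmm}$ gives $\|u\|_{\dot{H}_{m}^{1}}=\lmb^{-1}\|Q+\eps\|_{\dot{H}_{m}^{1}}$, so
\[
\Big|\tfrac{\|u\|_{\dot{H}_{m}^{1}}}{\|Q\|_{\dot{H}_{m}^{1}}}\lmb-1\Big|=\Big|\tfrac{\|Q+\eps\|_{\dot{H}_{m}^{1}}}{\|Q\|_{\dot{H}_{m}^{1}}}-1\Big|\leq\tfrac{\|\eps\|_{\dot{H}_{m}^{1}}}{\|Q\|_{\dot{H}_{m}^{1}}}\lesssim\|\eps\|_{\dot{H}_{m}^{1}},
\]
which is exactly \eqref{eq:decomp-lmb-est}. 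The only genuinely delicate point I anticipate is verifying that the Jacobian computation and the contraction estimate remain uniform on the entire $\mathcal{T}_{\delta}$ ball---in particular, for $m=0$ one must confirm that the logarithmic weight in $\dot{\calH}_{0}^{1}$ does not spoil the continuity of the linearized modulation action $(\alpha,\beta)\mapsto(\alpha\Lambda+i\beta)\td u$, but since $\calZ_{1},\calZ_{2}$ are compactly supported and smooth, all relevant pairings are controlled by the $\dot{\calH}_{m}^{1}$-norm of $\td u-Q$ uniformly, so no loss occurs.
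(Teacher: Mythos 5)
Your strategy --- a quantitative implicit function theorem / contraction mapping for the two orthogonality functionals in the two modulation parameters, linearized around a near-optimal $(\lambda_{0},\gamma_{0})$ using the transversality condition \eqref{eq:Z1Z2-transversality}, followed by the direct scaling computation for \eqref{eq:decomp-lmb-est} --- coincides with the paper's proof, and your explicit Jacobian computation $\partial_{\alpha}\mapsto\Lambda\td u$, $\partial_{\beta}\mapsto -i\td u$ is a helpful detail the paper compresses into ``standard application of the implicit function theorem.''

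The one place your write-up leaves a real gap is the global uniqueness of $(\lambda,\gamma)$. The IFT only gives uniqueness of $(\alpha,\beta)$ in a fixed small ball around $(0,0)$, so you must still rule out a competing pair $(\lambda',\gamma')$ whose associated $\eps'$ satisfies $\|\eps'\|_{\dot{\calH}_{m}^{1}}<\eta$ but whose distance from $(\lambda_{0},\gamma_{0})$ is not a priori within that ball. The phrase ``applied in a possibly larger (but fixed) neighborhood'' asserts this without an argument. The paper supplies the missing step: normalizing by the symmetries to $\lambda'=1$, $\gamma'=0$ and $\lambda\geq 1$, the identity $[Q+\eps]_{\lambda,\gamma}=Q+\eps'$ rearranges to $Q-Q_{\lambda,\gamma}=\eps_{\lambda,\gamma}-\eps'$, whence $\|Q-Q_{\lambda,\gamma}\|_{\dot{\calH}_{m}^{1}}\lesssim\eta$; since $(\lambda,\gamma)\mapsto Q_{\lambda,\gamma}$ separates points in $\dot{\calH}_{m}^{1}$ with a quantitative lower bound when $\mathrm{dist}((\lambda,\gamma),(1,0))$ is not small, choosing $\eta$ small enough forces both competing parameter pairs into the IFT's local uniqueness neighborhood. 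With that step inserted, your proof is complete and matches the paper's.
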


\begin{proof}
The proof will follow from a standard application of the implicit
function theorem and $L^{2}$-scaling/phase rotation symmetries.

Equip $\bbR_{+}$ with the metric $d_{\bbR_{+}}(\lmb_{1},\lmb_{2})\coloneqq|\log(\frac{\lmb_{1}}{\lmb_{2}})|$;
equip $\bbR/2\pi\bbZ$ with the metric inherited by the standard metric
on $\bbR$. We then equip the parameter space $\bbR_{+}\times\bbR/2\pi\bbZ$
with the product metric, which we denote by $\mathrm{dist}$. Next,
for $u\in\dot{\calH}_{m}^{1}$, $\lambda\in\bbR_{+}$, and $\gamma\in\bbR/2\pi\bbZ$,
we define $\eps=\eps(\lambda,\gamma,u)$ via the relation $u=[Q+\eps]_{\lambda,\gamma}$.

\textbf{Step 1.} Application of the implicit function theorem.

A standard application of the implicit function theorem yields the
following: there exist $0<\delta_{1},\delta_{2}\ll1$ such that, if
$\|u-Q\|_{\dot{\calH}_{m}^{1}}<\delta_{2}$, then there exists unique
$(\lambda,\gmm)$ in the class $\mathrm{dist}((\lmb,\gmm),(1,0))<\delta_{1}$
such that $(\eps,\calZ_{1})_{r}=(\eps,\calZ_{2})_{r}=0$. Moreover,
the Lipschitz bound $\mathrm{dist}((\lmb,\gmm),(1,0))\lesssim\|u-Q\|_{\dot{\calH}_{m}^{1}}$
holds. Note that $\|\eps\|_{\dot{\calH}_{m}^{1}}\lesssim\|u-Q\|_{\dot{\calH}_{m}^{1}}$
also holds in view of the formula of $\eps$.

On the other hand, towards the proof of the global uniqueness of $(\lmb,\gmm)$
in $\bbR_{+}\times\bbR/2\pi\bbZ$, let us prove the following: if
$\eta\ll\delta_{1}$ and $[Q+\eps]_{\lmb,\gmm}=[Q+\eps']_{\lmb',\gmm'}$
for some $\|\eps\|_{\dot{\calH}_{m}^{1}},\|\eps'\|_{\dot{\calH}_{m}^{1}}<\eta$,
then $\mathrm{dist}((\lmb,\gmm),(\lmb',\gmm'))<\delta_{1}$. Indeed,
by the scaling/rotation symmetries and changing the roles of $\lmb,\gmm,\eps$
and $\lmb',\gmm',\eps'$ if necessary, we may assume $\lmb\geq1$,
$\lmb'=1$, and $\gmm'=0$. Then, the identity $Q-Q_{\lmb,\gmm}=\eps_{\lmb,\gmm}-\eps'$
gives $\|Q-Q_{\lmb,\gmm}\|_{\dot{\calH}_{m}^{1}}\leq\|\eps\|_{\dot{\calH}_{m}^{1}}+\|\eps_{\lmb,\gmm}\|_{\dot{\calH}_{m}^{1}}\lesssim\eta\ll\delta_{1}$,
which implies $\mathrm{dist}((\lmb,\gmm),(1,0))<\delta_{1}$ as desired.

\textbf{Step 2.} Completion of the proof.

Let $\eta\ll\delta_{1}$ and $\delta\ll\min\{\eta,\delta_{2}\}$.
By the $L^{2}$-scaling and phase rotation invariances, we may assume
\[
\|u-Q\|_{\dot{\calH}_{m}^{1}}<\delta.
\]

(1) By the first result of Step 1, there exists $(\lmb,\gmm)$ satisfying
the orthogonality conditions \eqref{eq:decomp-orthogonality} and
the Lipschitz bound $\mathrm{dist}((\lmb,\gmm),(1,0))+\|\eps\|_{\dot{\calH}_{m}^{1}}\lesssim\delta\ll\eta$.
For the proof of uniqueness, if there exists $(\lmb',\gmm',\eps')$
satisfying $(\eps',\calZ_{1})_{r}=(\eps',\calZ_{2})_{r}=0$ and $\|\eps'\|_{\dot{\calH}_{m}^{1}}<\eta$,
then the second result of Step 1 says that $\mathrm{dist}((\lmb,\gmm),(\lmb',\gmm'))<\delta_{1}$.
By the local uniqueness result of Step 1, one must have $\lmb=\lmb'$
and $\gmm=\gmm'$.

(2) It now remains to show the estimate \eqref{eq:decomp-lmb-est}
for $\lmb$. Let $\wh{\lmb}\coloneqq\|Q\|_{\dot{H}_{m}^{1}}/\|u\|_{\dot{H}_{m}^{1}}$.
From the identity 
\[
\|Q\|_{\dot{H}_{m}^{1}}=\wh{\lambda}\|w\|_{\dot{H}_{m}^{1}}=\frac{\wh{\lambda}}{\lambda}\|Q+\eps\|_{\dot{H}_{m}^{1}}
\]
and smallness \eqref{eq:decomp-smallness}, we first have $\frac{\wh{\lmb}}{\lmb}\sim1$.
Together with this, the previous display yields the control \eqref{eq:decomp-lmb-est}.
This completes the proof.
\end{proof}
By Lemmas \ref{lem:orbital-stability-small-energy} and \ref{lem:dec-near-Q},
we have proved all the statements of Proposition \ref{prop:Decomposition}
except the improved smallness \eqref{eq:decomp-nonlinear-coercivity}
of $\eps$. We only know that $\|\eps\|_{\dot{\calH}_{m}^{1}}=o_{\alpha^{\ast}\to0}(1)$
so far. In our next lemma, we show that this qualitative smallness
can be improved to the following quantitative smallness:
\begin{lem}[Nonlinear coercivity of energy]
\label{lem:nonlinear-coercivity}For any $M>0$, there exists $\eta>0$
such that the nonlinear coercivity
\begin{equation}
E[Q+\eps]\sim_{M}\|\eps\|_{\dot{\calH}_{m}^{1}}^{2}\label{eq:comparability}
\end{equation}
holds for any $\eps\in H_{m}^{1}$ with $\|\eps\|_{L^{2}}\leq M$
satisfying the orthogonality conditions \eqref{eq:decomp-orthogonality}
and smallness $\|\eps\|_{\dot{\calH}_{m}^{1}}\leq\eta$.
\end{lem}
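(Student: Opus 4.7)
The upper bound $E[Q+\eps]\lesssim_{M}\|\eps\|_{\dot\calH_{m}^{1}}^{2}$ is routine: expand using \eqref{eq:linearized-energy-expn}, observe $\|L_{Q}\eps\|_{L^{2}}\lesssim\|\eps\|_{\dot\calH_{m}^{1}}$ directly from the definition of $L_{Q}$, and control the cubic/quartic/sextic remainders via Corollaries \ref{cor:nonlinear-estimates-holder} and \ref{cor:nonlinear-estimates-weightedL2} together with the Hardy--Sobolev-type embeddings of Section \ref{subsec:Adapted-function-spaces} (the dependence on $M$ enters through $\|\eps\|_{L^{2}}\le M$ in the higher-order pieces). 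The real content of the lemma is the lower bound, which must survive $\|\eps\|_{L^{2}}\sim 1$, so the usual perturbative scheme ``quadratic linear coercivity absorbs cubic remainders'' is unavailable, and one must instead exploit the self-dual and non-local structure of \eqref{eq:CSS-m-equiv} as outlined in the introduction.

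For the lower bound, my plan is to work in the self-dual form $2E[Q+\eps]=\|\bfD_{Q+\eps}(Q+\eps)\|_{L^{2}}^{2}$. A direct computation starting from $\bfD_{Q}Q=0$ and the expansion $A_{\theta}[Q+\eps]=A_{\theta}[Q]+2A_{\theta}[Q,\eps]+A_{\theta}[\eps]$ yields the identity
\[
\bfD_{Q+\eps}(Q+\eps) = L_{Q}\eps - \tfrac{2A_{\theta}[Q,\eps]}{r}\eps - \tfrac{A_{\theta}[\eps]}{r}(Q+\eps).
\]
Fix a radius $R=R(M)\gg 1$ that exceeds $\supp\calZ_{1}\cup\supp\calZ_{2}$ and is large enough that $m+A_{\theta}[Q](r)\le-\tfrac{m+2}{2}$ for $r\ge R$. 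I would rewrite the right-hand side as $L_{Q}(\chi_{R}\eps) + \widetilde{L}_{\eps}\bigl((1-\chi_{R})\eps\bigr) + \mathrm{Err}$, where $\widetilde{L}_{\eps}:=\partial_{r}-\tfrac{m+A_{\theta}[Q]+A_{\theta}[\eps]}{r}$ and $\mathrm{Err}$ collects (i) the commutators $\partial_{r}\chi_{R}\cdot\eps$, localized in $\{r\sim R\}$; (ii) the terms multiplying $Q$ by $A_{\theta}[\eps]$ or $A_{\theta}[Q,\eps]$, supported essentially where $Q$ is; and (iii) the exterior piece $-\tfrac{2A_{\theta}[Q,\eps]}{r}(1-\chi_{R})\eps$, which is small because $Q\lesssim r^{-(m+2)}$. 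Each piece of $\mathrm{Err}$, together with the two cross terms produced on squaring, is estimated using Corollary \ref{cor:nonlinear-estimates-weightedL2} and absorbed after taking $R$ large (and then $\eta$ small).

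The problem now splits into two coercivity bounds on the principal term. The interior is immediate: since $R$ exceeds the supports of $\calZ_{1},\calZ_{2}$, we have $\chi_{R}\calZ_{j}=\calZ_{j}$, so \eqref{eq:decomp-orthogonality} yields $(\chi_{R}\eps,\calZ_{j})_{r}=(\eps,\calZ_{j})_{r}=0$, and Lemma \ref{lem:linear-coercivity} gives $\|L_{Q}(\chi_{R}\eps)\|_{L^{2}}^{2}\gtrsim\|\chi_{R}\eps\|_{\dot\calH_{m}^{1}}^{2}$. The substantive step is the exterior \emph{nonlinear Hardy} inequality
\[
\|\widetilde{L}_{\eps}\widetilde\eps\|_{L^{2}}^{2} \;\gtrsim_{M}\; \|\partial_{r}\widetilde\eps\|_{L^{2}}^{2} + \|\tfrac{1}{r}\widetilde\eps\|_{L^{2}}^{2} \qquad\text{for any }\widetilde\eps\text{ supported in }\{r\ge R\}.
\]
I would prove it by mimicking Lemma \ref{lem:Nonlin-coer-neg-equiv}: setting $\mu:=m+A_{\theta}[Q]+A_{\theta}[\eps]$ and splitting $\mu=-(m+2)+\beta$ with $\beta:=\bigl(A_{\theta}[Q]+2(m+1)\bigr)+A_{\theta}[\eps]$, the square expansion gives $\|\widetilde{L}_{\eps}\widetilde\eps\|_{L^{2}}^{2} = \|\partial_{r}\widetilde\eps\|_{L^{2}}^{2} + \|\tfrac{\mu}{r}\widetilde\eps\|_{L^{2}}^{2} - 2(\partial_{r}\widetilde\eps,\tfrac{\beta}{r}\widetilde\eps)_{r}$, since the constant $-(m+2)$ piece of $\mu$ integrates by parts to a boundary contribution that vanishes thanks to $\widetilde\eps(R)=0$ and decay at infinity. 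The cross term is absorbed by Cauchy--Schwarz once one has the pointwise bound $|\beta(r)|\le(1-c(M))|\mu(r)|$ on $\{r\ge R\}$, which in turn follows from the three facts $A_{\theta}[Q]+2(m+1)=\tfrac{1}{2}\int_{r}^{\infty}|Q|^{2}r'\,dr'=O(r^{-2m-2})$ being uniformly small for $r\ge R$, $A_{\theta}[\eps]\in[-M/(4\pi),0]$, and $|\mu|\ge(m+2)/2$ by the choice of $R$. This is precisely the defocusing mechanism of Lemma \ref{lem:Nonlin-coer-neg-equiv} (negative equivariance), now realized in the \emph{exterior} thanks to the asymptotic value $-(m+2)$ of $m+A_{\theta}[Q]$.

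The principal obstacle is the nonlinear Hardy step, executed uniformly in $\eps$ with $\|\eps\|_{L^{2}}\le M$; once it is in hand, combining the interior and exterior bounds via $\|\chi_{R}\eps\|_{\dot\calH_{m}^{1}}^{2}+\|(1-\chi_{R})\eps\|_{\dot H_{m}^{1}}^{2}+\|\tfrac{1}{r}(1-\chi_{R})\eps\|_{L^{2}}^{2}\gtrsim\|\eps\|_{\dot\calH_{m}^{1}}^{2}$ (in the $m=0$ case using that $\langle\log_{-}r\rangle$ is bounded on $\{r\ge R\}$) and absorbing $\mathrm{Err}$ closes \eqref{eq:comparability}.
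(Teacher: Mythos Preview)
Your strategy is essentially the paper's: self-dual expansion, interior linear coercivity via Lemma~\ref{lem:linear-coercivity}, and an exterior nonlinear Hardy inequality exploiting that $m+A_{\theta}[Q]\to-(m+2)$. Your variant of the Hardy step (integrate by parts only the constant $-(m+2)$ and absorb the remainder $\beta$ via $|\beta|\le(1-c)|\mu|$) is a legitimate alternative to the paper's version, which instead integrates by parts the full $m+A_{\theta}[Q]$ and picks up a harmless $\tfrac{1}{2}\int_{R}^{\infty}Q^{2}|f|^{2}$ term. (Minor slip: $A_{\theta}[\eps]\in[-M^{2}/(4\pi),0]$, not $[-M/(4\pi),0]$.)

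There is, however, a genuine gap in your handling of the overlap region $\{r\sim R\}$. The commutator $(\partial_{r}\chi_{R})\eps$ and, more importantly, the cross term $\bigl(L_{Q}(\chi_{R}\eps),\widetilde{L}_{\eps}((1-\chi_{R})\eps)\bigr)_{r}$ between your two principal pieces (which you do not mention---``two cross terms'' misses this one) both contribute $O_{M}\bigl(\|\chf_{[R,2R]}|\eps|_{-1}\|_{L^{2}}^{2}\bigr)$. This quantity is \emph{not} $o_{R\to\infty}(\|\eps\|_{\dot{\calH}_{m}^{1}}^{2})$ uniformly in $\eps$: for $\eps$ concentrated in the shell $[R,2R]$ it is comparable to $\|\eps\|_{\dot{\calH}_{m}^{1}}^{2}$, so ``taking $R$ large and then $\eta$ small'' cannot absorb it. The paper closes this by an averaging trick: replace $R$ by $R'$, integrate $\tfrac{1}{\log R}\int_{R}^{R^{2}}(\cdot)\,\tfrac{dR'}{R'}$, and use Fubini to turn the shell term into $\tfrac{1}{\log R}\cdot O_{M}(\|\eps\|_{\dot{\calH}_{m}^{1}}^{2})=o_{R\to\infty}(\|\eps\|_{\dot{\calH}_{m}^{1}}^{2})$. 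You need this (or an equivalent device) to finish.
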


\begin{rem}
The proof of Lemma \ref{lem:nonlinear-coercivity} not only relies
on the linear coercivity of $L_{Q}$ (Lemma \ref{lem:linear-coercivity}),
but also on a Hardy inequality \eqref{eq:HardyClaim} for the operator
$\bfD_{Q}-\frac{A_{\theta}[\eps]}{r}$. We will call \eqref{eq:HardyClaim}
the \emph{nonlinear Hardy inequality}, because the multiplication
by $\frac{A_{\theta}[\eps]}{r}$ cannot be treated perturbatively
when $\|\eps\|_{L^{2}}$ is allowed to be large. In fact, the proof
of \eqref{eq:HardyClaim} is reminiscent of the proof of the global
coercivity of energy in the negative equivariance case.
\end{rem}

\begin{proof}
In the proof, we will need an additional parameter $R$ satisfying
the parameter dependence $0<\eta\ll R^{-1}\ll M^{-1}$. Thus we may
freely replace the error terms such as $O_{M}(1)\cdot o_{R\to\infty}(1)$
and $O_{M,R}(1)\cdot o_{\eta\to0}(1)$ by $o_{R\to\infty}(1)$ and
$o_{\eta\to0}(1)$, respectively. Moreover, we may bound $o_{\eta\to0}(1)$
by $o_{R\to\infty}(1)$.

We start from writing the energy functional using the self-dual form
\eqref{eq:energy-self-dual-form}: 
\[
2E[Q+\eps]=\|\bfD_{Q+\eps}(Q+\eps)\|_{L^{2}}^{2}=\|L_{Q}\eps-\tfrac{2A_{\theta}[Q,\eps]}{r}\eps-\tfrac{A_{\theta}[\eps]}{r}\eps\|_{L^{2}}^{2}.
\]
Note that the middle term is a perturbative error
\begin{align*}
\|\tfrac{2}{r}A_{\theta}[Q,\eps]\eps\|_{L^{2}} & \lesssim(\tint 0{\infty}Q\langle r\rangle|\eps|dr)\cdot\|\tfrac{1}{\langle r\rangle}\eps\|_{L^{2}}\\
 & \lesssim\|\eps\|_{\dot{\calH}_{m}^{1}}^{\frac{3}{2}}\|\eps\|_{L^{2}}^{\frac{1}{2}}\lesssim O_{M}(1)\cdot o_{\eta\to0}(1)\cdot\|\eps\|_{\dot{\calH}_{m}^{1}}^{2}\lesssim o_{\eta\to0}(\|\eps\|_{\dot{\calH}_{m}^{1}}^{2}),
\end{align*}
whereas the remaining terms are not perturbative: $\|L_{Q}\eps\|\sim\|\eps\|_{\dot{\calH}_{m}^{1}}$
due to \eqref{eq:coercivity} and (using \eqref{eq:integral-operator-bdd1})
\begin{align*}
\|\tfrac{A_{\theta}[\eps]}{r}\eps\|_{L^{2}} & \lesssim\|\tfrac{1}{r}\tint 0r|\eps|^{2}r'dr'\|_{L^{\infty}}\|\eps\|_{L^{2}}\\
 & \lesssim\||\eps|^{2}\|_{L^{2}}\|\eps\|_{L^{2}}\lesssim\|\eps\|_{L^{2}}^{2}\|\eps\|_{\dot{H}_{m}^{1}}\lesssim_{M}\|\eps\|_{\dot{\calH}_{m}^{1}}.
\end{align*}
The above estimates in particular yield the boundedness $E[Q+\eps]\lesssim_{M}\|\eps\|_{\dot{\calH}_{m}^{1}}^{2}$.
For the proof of the reverse inequality, we note 
\begin{equation}
2E[Q+\eps]=\|L_{Q}\eps-\tfrac{A_{\theta}[\eps]}{r}\eps\|_{L^{2}}^{2}+o_{\eta\to0}(\|\eps\|_{\dot{\calH}_{m}^{1}}^{2}).\label{eq:coercivity-tmp2}
\end{equation}
as a consequence of the above estimates.

We then separately consider the coercivity of $\|L_{Q}\eps-\frac{A_{\theta}[\eps]}{r}\eps\|_{L^{2}}$
in the regions $r\lesssim R$ and $r\gtrsim R$, for $R>1$ sufficiently
large. In the region $r\lesssim R$, we notice that the nonlinear
contribution is small: 
\begin{align*}
\|\tfrac{A_{\theta}[\eps]}{r}\chi_{R}\eps\|_{L^{2}} & \lesssim\||\eps|^{2}\|_{L^{2}}\|\chi_{R}\eps\|_{L^{2}}\\
 & \lesssim\|\eps\|_{L^{2}}\|\eps\|_{\dot{H}_{m}^{1}}\cdot R\|\eps\|_{\dot{\calH}_{m}^{1}}\lesssim MR\|\eps\|_{\dot{\calH}_{m}^{1}}^{2}\lesssim o_{\eta\to0}(\|\eps\|_{\dot{\calH}_{m}^{1}}).
\end{align*}
On the other hand, the nonlocal term of $L_{Q}$ is small in the region
$r\gtrsim R$, thanks to the spatial decay of $Q$: 
\[
\|\tfrac{Q}{r}\tint 0r\Re[Q(1-\chi_{R})\eps]r'dr'\|_{L^{2}}=o_{R\to\infty}(\|\epsilon\|_{\dot{\calH}_{m}^{1}}).
\]
Thus we have shown that 
\[
\|L_{Q}\eps-\tfrac{A_{\theta}[\eps]}{r}\eps\|_{L^{2}}=\|L_{Q}(\chi_{R}\eps)+(\bfD_{Q}-\tfrac{A_{\theta}[\eps]}{r})(1-\chi_{R})\eps\|_{L^{2}}+o_{R\to\infty}(\|\eps\|_{\dot{\calH}_{m}^{1}}).
\]
We further observe the following almost orthogonality: 
\begin{align*}
 & \Big|\Big(L_{Q}(\chi_{R}\eps),(\bfD_{Q}-\tfrac{A_{\theta}[\eps]}{r})(1-\chi_{R})\eps\Big)_{r}\Big|\\
 & \lesssim_{M}\|(\chf_{(0,2R]}|\eps|_{-1}+\tfrac{Q}{r}\tint 0r|Q\eps|r'dr')\cdot\chf_{[R,\infty)}|\eps|_{-1}\|_{L^{1}}\\
 & \lesssim_{M}\|\chf_{[R,2R]}|\eps|_{-1}\|_{L^{2}}^{2}+o_{R\to\infty}(\|\eps\|_{\dot{\calH}_{m}^{1}}^{2}).
\end{align*}
As a result, we have arrived at 
\begin{align}
\|L_{Q}\eps-\tfrac{A_{\theta}[\eps]}{r}\eps\|_{L^{2}}^{2} & =\|L_{Q}(\chi_{R}\eps)\|_{L^{2}}^{2}+\|(\bfD_{Q}-\tfrac{A_{\theta}[\eps]}{r})(1-\chi_{R})\eps\|_{L^{2}}^{2}\label{eq:coercivity-tmp4}\\
 & \quad+o_{R\to\infty}(\|\eps\|_{\dot{\calH}_{m}^{1}}^{2})+O_{M}(\|\chf_{[R,2R]}|\eps|_{-1}\|_{L^{2}}^{2}).\nonumber 
\end{align}

Since $R$ is sufficiently large, we may assume that $\calZ_{1}$
and $\calZ_{2}$ are supported in the region $r\leq R$. Thus $\chi_{R}\eps$
satisfies the same orthogonality conditions as $\eps$ and hence the
first term of RHS\eqref{eq:coercivity-tmp4} is coercive by Lemma
\ref{lem:linear-coercivity}: 
\begin{equation}
\|L_{Q}(\chi_{R}\eps)\|_{L^{2}}^{2}\sim\|\chi_{R}\eps\|_{\dot{\calH}_{m}^{1}}^{2}.\label{eq:coercivity-tmp5}
\end{equation}
To deal with the second term of RHS\eqref{eq:coercivity-tmp4}, we
claim the following \emph{nonlinear Hardy inequality}: under the parameter
dependence $R^{-1}\ll M^{-1}$ and $\|\eps\|_{L^{2}}\leq M$, we have,
for $f\in\dot{\calH}_{m}^{1}$ with $f(R)=0$, 
\begin{equation}
\|\chf_{[R,\infty)}(\bfD_{Q}-\tfrac{A_{\theta}[\eps]}{r})f\|_{L^{2}}^{2}\sim_{M}\|\chf_{[R,\infty)}|f|_{-1}\|_{L^{2}}^{2}.\label{eq:HardyClaim}
\end{equation}
Let us assume \eqref{eq:HardyClaim} and finish the proof. Substituting
\eqref{eq:coercivity-tmp5} and \eqref{eq:HardyClaim} into \eqref{eq:coercivity-tmp4},
we have 
\[
\|L_{Q}\eps-\tfrac{A_{\theta}[\eps]}{r}\eps\|_{L^{2}}^{2}\geq c(M)\|\eps\|_{\dot{\calH}_{m}^{1}}^{2}-o_{R\to\infty}(\|\eps\|_{\dot{\calH}_{m}^{1}}^{2})-O_{M}(\|\chf_{[R,2R]}|\eps|_{-1}\|_{L^{2}}^{2})
\]
for some constant $c(M)>0$ depending on $M$. Performing an averaging
argument in $R$ for the last term (that is, one replaces $R$ by
$R'$, takes the integral $\frac{1}{\log R}\int_{R}^{R^{2}}\cdot\frac{dR'}{R'}$,
uses Fubini, and then exploits the smallness $\frac{1}{\log R}=o_{R\to\infty}(1)$)
and applying the parameter dependence $R^{-1}\ll M^{-1}$ yield 
\[
\|L_{Q}\eps-\tfrac{A_{\theta}[\eps]}{r}\eps\|_{L^{2}}^{2}\gtrsim_{M}\|\eps\|_{\dot{\calH}_{m}^{1}}^{2}.
\]
Substituting this into \eqref{eq:coercivity-tmp2} gives the nonlinear
coercivity 
\[
E[Q+\eps]\gtrsim_{M}\|\eps\|_{\dot{\calH}_{m}^{1}}^{2}.
\]
This completes the proof of \eqref{eq:comparability}, assuming the
nonlinear Hardy inequality \eqref{eq:HardyClaim}.

\emph{Proof of the nonlinear Hardy inequality \eqref{eq:HardyClaim}.}

As the $\lesssim_{M}$-inequality is obvious, we only show the $\gtrsim_{M}$-inequality.
Let $f\in\dot{\calH}_{m}^{1}$ be such that $f(R)=0$. We write 
\begin{align*}
 & \tint R{\infty}|(\bfD_{Q}-\tfrac{A_{\theta}[\eps]}{r})f|^{2}rdr\\
 & =\tint R{\infty}\{|\partial_{r}f|^{2}+\tfrac{(m+A_{\theta}[Q]+A_{\theta}[\eps])^{2}}{r^{2}}|f|^{2}-2\Re(\br{\partial_{r}f}\cdot\tfrac{A_{\theta}[\eps]}{r}f)\}rdr\\
 & \quad-\tint R{\infty}2\Re(\br{\partial_{r}f}\cdot(m+A_{\theta}[Q])f)dr
\end{align*}
and integrate by parts the last term: 
\[
-\tint R{\infty}2\Re(\br{\partial_{r}f}\cdot(m+A_{\theta}[Q])f)dr=-\tfrac{1}{2}\tint R{\infty}Q^{2}|f|^{2}rdr,
\]
where we used $f(R)=0$. Thus we have arrived at the main identity
\begin{equation}
\begin{aligned} & \tint R{\infty}|(\bfD_{Q}-\tfrac{A_{\theta}[\eps]}{r})\eps|^{2}rdr\\
 & =\tint R{\infty}\{|\partial_{r}f|^{2}+\tfrac{(m+A_{\theta}[Q]+A_{\theta}[\eps])^{2}}{r^{2}}|f|^{2}-2\Re(\br{\partial_{r}f}\cdot\tfrac{A_{\theta}[\eps]}{r}f)\}rdr\\
 & \quad-\tfrac{1}{2}\tint R{\infty}Q^{2}|f|^{2}rdr.
\end{aligned}
\label{eq:HardyClaim-tmp1}
\end{equation}

Next, we claim that the first term of RHS\eqref{eq:HardyClaim-tmp1}
enjoys a good lower bound: 
\begin{multline}
\tint R{\infty}\{|\partial_{r}f|^{2}+\tfrac{(m+A_{\theta}[Q]+A_{\theta}[\eps])^{2}}{r^{2}}|f|^{2}-2\Re(\br{\partial_{r}f}\cdot\tfrac{A_{\theta}[\eps]}{r}f)\}rdr\\
\gtrsim_{M}\|\chf_{[R,\infty)}|f|_{-1}\|_{L^{2}}^{2}.\label{eq:claim2}
\end{multline}
Indeed, since $m+A_{\theta}[Q]$ and $A_{\theta}[\eps]$ are both
negative (on $[R,\infty)$) and $|A_{\theta}[\eps]|\leq\frac{1}{4\pi}M^{2}$,
we notice that there exists $c=c(M)>0$ satisfying 
\[
\chf_{[R,\infty)}|A_{\theta}[\eps]|\leq(1-c)\chf_{[R,\infty)}|m+A_{\theta}[Q]+A_{\theta}[\eps]|.
\]
Combining this with 
\[
|2\Re(\br{\partial_{r}f}\cdot\tfrac{A_{\theta}[\eps]}{r}f)|\leq(1-c)|\partial_{r}f|^{2}+\tfrac{1}{1-c}|\tfrac{A_{\theta}[\eps]}{r}f|^{2},
\]
we obtain the desired lower bound of the integrand 
\begin{align*}
|\partial_{r}f|^{2}+\tfrac{(m+A_{\theta}[Q]+A_{\theta}[\eps])^{2}}{r^{2}}|f|^{2} & -2\Re(\br{\partial_{r}f}\cdot\tfrac{A_{\theta}[\eps]}{r}f)\\
 & \geq c(|\partial_{r}f|^{2}+\tfrac{(m+A_{\theta}[Q]+A_{\theta}[\eps])^{2}}{r^{2}}|f|^{2})\gtrsim_{M}|f|_{-1}^{2}.
\end{align*}
Integrating this on the region $r\geq R$ yields \eqref{eq:claim2}.

Now the proof of \eqref{eq:HardyClaim} is immediate from the identity
\eqref{eq:HardyClaim-tmp1}, the lower bound \eqref{eq:claim2}, the
estimate 
\[
\tfrac{1}{2}\tint R{\infty}Q^{2}|f|^{2}rdr\lesssim\tfrac{1}{R^{2}}\|\chf_{[R,\infty)}|f|_{-1}\|_{L^{2}}^{2},
\]
and the parameter dependence $R^{-1}\ll M^{-1}$.
\end{proof}
To finish the proof of Proposition \ref{prop:Decomposition}, we recall
that it only suffices to show \eqref{eq:decomp-nonlinear-coercivity}.
This follows from Lemma \ref{lem:nonlinear-coercivity}: 
\[
\|\eps\|_{\dot{\calH}_{m}^{1}}\sim_{M}\sqrt{E[Q+\eps]}=\lambda\sqrt{E[u]}.
\]
This completes the proof of Proposition \ref{prop:Decomposition}.

\subsection{Upper bounds for $\protect\lmb(t)$}

Here and in the next subsection, we let $u$ be a $H_{m}^{1}$-solution
to \eqref{eq:CSS-m-equiv} which blows up forwards in time at $T\in(0,\infty)$.
Let $M$ and $E$ be the mass and energy of $u$, respectively. We
recall that, by the standard Cauchy theory of \eqref{eq:CSS-m-equiv},
$\|u(t)\|_{\dot{H}_{m}^{1}}\to\infty$ as $t\to T$ and thus $\sqrt{E}/\|u(t)\|_{\dot{H}_{m}^{1}}\to0$.
Therefore, for all $t$ sufficiently close to $T$, we can decompose
$u(t)$ according to Proposition \ref{prop:Decomposition}: 
\[
u(t,r)=\frac{e^{i\gamma(t)}}{\lambda(t)}[Q+\eps(t,\cdot)]\Big(\frac{r}{\lambda(t)}\Big)
\]
with the parameters $\lambda(t)$, $\gamma(t)$, and the remainder
$\eps(t)$ satisfying the properties as in Proposition \ref{prop:Decomposition}.
This subsection is devoted to the proofs of the upper bounds \eqref{eq:thm1-lmb-upper-bound}
and \eqref{eq:thm1-lmb-upper-bound-radial}.
\begin{proof}[Proof of \eqref{eq:thm1-lmb-upper-bound}]
Recall the rescaled spacetime variables $(s,y)$ (see \eqref{eq:def-renormalized-var}).
We claim that, as a standard application of the modulation estimate,
for all $t$ sufficiently close to $T$ we have 
\begin{equation}
\Big|\frac{\lambda_{s}}{\lambda}\Big|+|\gamma_{s}|\lesssim\|\eps\|_{\dot{\calH}_{m}^{1}}\lesssim_{M}\lambda\sqrt{E}.\label{eq:thm1-claim1}
\end{equation}
Assuming this claim, we have 
\[
|\lambda_{t}|=\lmb^{-2}|\lmb_{s}|\lesssim_{M}\sqrt{E},
\]
from which the bound $\lambda(t)\lesssim_{M}\sqrt{E}(T-t)$ (which
is \eqref{eq:thm1-lmb-upper-bound}) follows.

Henceforth, we prove the claim \eqref{eq:thm1-claim1}. As the argument
is standard, we will be brief. For a (possibly time-dependent) profile
$\psi$, we note the identity 
\begin{equation}
\begin{aligned}\rd_{s}(\eps,\psi)_{r} & =\tfrac{\lmb_{s}}{\lmb}(\Lambda(Q+\eps),\psi)_{r}-\gmm_{s}(i(Q+\eps),\psi)_{r}\\
 & \quad+(i\calL_{Q}\eps+iR_{Q}(\eps),\psi)_{r}+(\eps,\rd_{s}\psi)_{r}.
\end{aligned}
\label{eq:thm1-claim1-comp}
\end{equation}
If $\psi\in\{\calZ_{1},\calZ_{2}\}$, then we use the orthogonality
conditions \eqref{eq:decomp-orthogonality}, anti-symmetricity of
$\Lambda$ and $i$, the self-dual factorization $\calL_{Q}=L_{Q}^{\ast}L_{Q}$,
and $\partial_{s}\psi=0$ to obtain 
\begin{align*}
\tfrac{\lmb_{s}}{\lmb}\{(\Lambda Q,\calZ_{k})_{r}-(\eps,\Lambda\calZ_{k})_{r}\}+\gmm_{s}\{(iQ,\calZ_{k})_{r}-(\eps,i\calZ_{k})_{r}\}\qquad\\
=(L_{Q}\eps,L_{Q}i\calZ_{k})_{r}+(R_{Q}(\eps),i\calZ_{k})_{r}.
\end{align*}
By the transversality assumption \eqref{eq:Z1Z2-transversality},
$\calZ_{k}\in C_{c,m}^{\infty}$, and $\|\eps(t)\|_{\dot{\calH}_{m}^{1}}\to0$
as $t\to T$, we get 
\begin{equation}
|\tfrac{\lmb_{s}}{\lmb}|+|\gmm_{s}|\lesssim\|L_{Q}\eps\|_{L^{2}}+\|R_{Q}(\eps)\|_{L^{2}}.\label{eq:thm1-claim1-comp2}
\end{equation}
Note that $\|L_{Q}\eps\|_{L^{2}}\lesssim\|\eps\|_{\dot{\calH}_{m}^{1}}$
due to \eqref{eq:coercivity}. In the next paragraph, we show that
\begin{equation}
\|R_{Q}(\eps)\|_{L^{2}}\lesssim_{M}\|\eps\|_{\dot{\calH}_{m}^{1}}^{2}.\label{eq:thm1-claim1-comp3}
\end{equation}
Substituting this into \eqref{eq:thm1-claim1-comp2} completes the
proof of the claim \eqref{eq:thm1-claim1}.

\emph{Proof of \eqref{eq:thm1-claim1-comp3}.} The nonlinear term
$R_{Q}(\eps)$ is a linear combination of $\calN_{\ast}(\psi_{1},\dots,\psi_{\ast})$
where $\#\{j:\psi_{j}=\eps\}\geq2$. If $\#\{j:\psi_{j}=\eps\}\geq3$,
then by Corollary \ref{cor:nonlinear-estimates-holder} we have 
\[
\|\calN_{\ast}(\psi_{1},\dots,\psi_{\ast})\|_{L^{2}}\lesssim(1+\|Q\|_{L^{2}}^{2}+\|\eps\|_{L^{2}}^{2})\|\eps\|_{L^{6}}^{3}\lesssim_{M}\|\eps\|_{L^{6}}^{3}\lesssim_{M}\|\eps\|_{L^{2}}\|\eps\|_{\dot{\calH}_{m}^{1}}^{2}.
\]
If $\#\{j:\psi_{j}=\eps\}=2$, we separately consider the local and
nonlocal nonlinearities; for $\calN_{3,0}$ we use \eqref{eq:weighted-Linfty-est}
to have 
\[
\|\calN_{3,0}(\psi_{1},\psi_{2},\psi_{3})\|_{L^{2}}\lesssim\|Q\eps^{2}\|_{L^{2}}\lesssim\|\langle\log_{-}y\rangle Q\|_{L^{2}}\|\langle\log_{-}y\rangle^{-\frac{1}{2}}\eps\|_{L^{\infty}}^{2}\lesssim\|\eps\|_{\dot{\calH}_{m}^{1}}^{2}
\]
and for the nonlocal nonlinearities we use Corollary \ref{cor:nonlinear-estimates-weightedL2}
to have 
\[
\|\calN_{\ast}(\psi_{1},\dots,\psi_{\ast})\|_{L^{2}}\lesssim_{M}\|\langle\log_{-}y\rangle^{2}Q\|_{L^{2}}\|y^{-1}\langle\log_{-}y\rangle^{-1}\eps\|_{L^{2}}^{2}\lesssim_{M}\|\eps\|_{\dot{\calH}_{m}^{1}}^{2}.
\]
This completes the proof of \eqref{eq:thm1-claim1-comp3}.
\end{proof}
When $m=0$, we can further improve the bound \eqref{eq:thm1-lmb-upper-bound}
to \eqref{eq:thm1-lmb-upper-bound-radial}. The idea is to project
the $\eps$-equation onto the direction of $y^{2}Q$, which is a generalized
kernel element that effectively detects the evolution of $\lmb$.
The logarithmic improvement for the upper bound of $\lmb$ will follow
from the fact that $yQ$ logarithmically fails to lie in $L^{2}$,
which holds only in the $m=0$ case.
\begin{proof}[Proof of \eqref{eq:thm1-lmb-upper-bound-radial} for $m=0$]
Let $\psi=y^{2}Q\chi_{R(t)}$ with $R(t)\coloneqq(T-t)^{-\delta}$
with small $\delta>0$. We note that it suffices to choose any $\delta\in(0,\frac{1}{2})$
in the following analysis. We note the bounds 
\begin{equation}
\begin{aligned}|\tfrac{R_{s}}{R}| & =\lmb^{2}|\tfrac{R_{t}}{R}|\lesssim\tfrac{\lmb^{2}}{T-t}\lesssim_{M}\lmb\sqrt{E},\\
|\rd_{s}\psi| & \lesssim|\tfrac{R_{s}}{R}|\chf_{y\sim R}\lesssim_{M}\lmb\sqrt{E}\chf_{y\sim R}.
\end{aligned}
\label{eq:thm1-tmp5}
\end{equation}

We start by rewriting \eqref{eq:thm1-claim1-comp} as follows: 
\[
\begin{aligned}\tfrac{\lmb_{s}}{\lmb}(\Lambda Q,\psi)_{r}-\rd_{s}(\eps,\psi)_{r} & =\tfrac{\lmb_{s}}{\lmb}(\eps,\Lambda\psi)_{r}-\gmm_{s}(\eps,i\psi)_{r}-(\eps,\rd_{s}\psi)_{r}\\
 & \quad-(i\calL_{Q}\eps,\psi)_{r}-(iR_{Q}(\eps),\psi)_{r}.
\end{aligned}
\]
We further rearrange the LHS of the above display as 
\[
\tfrac{\lmb_{s}}{\lmb}(\Lambda Q,\psi)_{r}-\rd_{s}(\eps,\psi)_{r}=\tfrac{1}{\lmb}\partial_{s}\{\lmb(\Lambda Q,\psi)_{r}-\lmb(\eps,\psi)_{r}\}-(\Lambda Q,\rd_{s}\psi)_{r}+\tfrac{\lmb_{s}}{\lmb}(\eps,\psi)_{r}.
\]
As a result, we have obtained 
\begin{equation}
\begin{aligned}\tfrac{1}{\lmb}\partial_{s}\{\lmb(\Lambda Q,\psi)_{r}-\lmb(\eps,\psi)_{r}\} & =\tfrac{\lmb_{s}}{\lmb}(\eps,y\rd_{y}\psi)_{r}-\gmm_{s}(\eps,i\psi)_{r}-(\eps,\rd_{s}\psi)_{r}\\
 & \quad+(\Lambda Q,\rd_{s}\psi)_{r}-(i\calL_{Q}\eps,\psi)_{r}-(iR_{Q}(\eps),\psi)_{r}.
\end{aligned}
\label{eq:thm1-tmp2}
\end{equation}
In view of 
\begin{align*}
(\Lambda Q,\psi)_{r} & =16\pi\log R+O(1),\\
|(\eps,\psi)_{r}| & \lesssim\|\eps\|_{\dot{\calH}_{0}^{1}}R^{2}\lesssim_{M}\lmb\sqrt{E}(T-t)^{-2\delta}\lesssim_{M,E}(T-t)^{1-2\delta},
\end{align*}
we have 
\begin{equation}
\text{LHS}\eqref{eq:thm1-tmp2}=\frac{1}{\lmb}\partial_{s}\Big\{\lmb\big(16\pi\log R+O(1)\big)\Big\}\label{eq:thm1-tmp3}
\end{equation}

We turn to estimate each term of RHS\eqref{eq:thm1-tmp2}. By \eqref{eq:thm1-claim1}
and \eqref{eq:thm1-tmp5}, we have 
\[
|\tfrac{\lmb_{s}}{\lmb}(\eps,y\rd_{y}\psi)_{r}|+|\gmm_{s}(\eps,i\psi)_{r}|+|(\eps,\rd_{s}\psi)_{r}|\lesssim_{M,E}\lmb\cdot\|\eps\|_{\dot{\calH}_{0}^{1}}R^{2}\lesssim_{M,E}\lmb(T-t)^{1-2\delta}.
\]
Next, by \eqref{eq:thm1-tmp5}, we have 
\[
|(\Lambda Q,\rd_{s}\psi)_{r}|\lesssim_{M}\lmb\sqrt{E}.
\]
Next, the linear term can be bounded as 
\begin{align*}
|(i\calL_{Q}\eps,\psi)_{r}|=|(L_{Q}\eps,L_{Q}i\psi)_{r}| & \lesssim\|L_{Q}\eps\|_{L^{2}}\|L_{Q}i\psi\|_{L^{2}}\\
 & \lesssim_{M}\lmb\sqrt{E}\cdot(\log R)^{\frac{1}{2}}\lesssim_{M}\lmb\sqrt{E}|\log(T-t)|^{\frac{1}{2}},
\end{align*}
Finally, the nonlinear term can be bounded using \eqref{eq:thm1-claim1-comp3}:
\[
|(iR_{Q}(\eps),\psi)_{r}|\lesssim R\|R_{Q}(\eps)\|_{L^{2}}\lesssim_{M}R\|\eps\|_{\dot{\calH}_{m}^{1}}^{2}\lesssim_{M,E}\lmb(T-t)^{1-\delta}.
\]
Therefore, we have obtained the following bound as $t\to T$: 
\begin{equation}
|\text{RHS}\eqref{eq:thm1-tmp2}|\lesssim_{M}\lmb\sqrt{E}|\log(T-t)|^{\frac{1}{2}}.\label{eq:thm1-tmp4}
\end{equation}

Combining \eqref{eq:thm1-tmp3} and \eqref{eq:thm1-tmp4}, we have
as $t\to T$: 
\[
\Big|\frac{1}{\lmb}\partial_{s}\Big\{\lmb\big(16\pi\log R+O(1)\big)\Big\}\Big|\lesssim_{M}\lmb\sqrt{E}|\log(T-t)|^{\frac{1}{2}}.
\]
In terms of the $t$-variable, this reads 
\[
\Big|\partial_{t}\Big\{\lmb\big(16\pi\log R+O(1)\big)\Big\}\Big|\lesssim_{M}\sqrt{E}|\log(T-t)|^{\frac{1}{2}}.
\]
Integrating the above backwards from the blow-up time with $\log R\sim|\log(T-t)|$
and $\lmb\lesssim_{M,E}T-t$ yields 
\[
\lmb\cdot|\log(T-t)|\lesssim_{M}\sqrt{E}(T-t)|\log(T-t)|^{\frac{1}{2}},
\]
which completes the proof of \eqref{eq:thm1-lmb-upper-bound-radial}.
\end{proof}

\subsection{Existence and regularity of asymptotic profile}

In this subsection, we finish the proof of Theorem \ref{thm:asymptotic-description}
by showing that (i) $u(t)$ decomposes as \eqref{eq:thm1-decomp}
for some $z^{\ast}\in L^{2}$, (ii) $z^{\ast}$ enjoys further regularity
$|z^{\ast}|_{-1}\in L^{2}$, and (iii) also $rz^{\ast}\in L^{2}$
if $u$ is a $H_{m}^{1,1}$-solution. We closely follow the argument
of Merle--Raphaël \cite{MerleRaphael2005CMP}.

We first claim the outer $L^{2}$-convergence:
\begin{lem}[Outer $L^{2}$-convergence]
\label{lem:thm1-outer-L2-conv}There exists $z^{\ast}\in L^{2}$
such that for any $R>0$, we have $\chf_{r\geq R}\eps^{\sharp}(t)\to\chf_{r\geq R}z^{\ast}$
in $L^{2}$ as $t\to T$.
\end{lem}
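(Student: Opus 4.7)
The plan is to establish strong $L^{2}$-convergence of $u(t)$ on every exterior region $\{r \geq R\}$; since $\|\chf_{r \geq R} Q_{\lambda(t), \gamma(t)}\|_{L^{2}} \to 0$ as $\lambda(t) \to 0$, this delivers the desired convergence of $\chf_{r \geq R} \eps^{\sharp}(t)$ to some $z^{*} \in L^{2}$. The essential input is a uniform $H^{1}$-bound on $\eps^{\sharp}$: Proposition \ref{prop:Decomposition} yields $\|\eps(t)\|_{\dot{\calH}_{m}^{1}} \lesssim_{M} \lambda(t)\sqrt{E}$, which after undoing the $\lmb$-scaling gives $\|\partial_{r}\eps^{\sharp}(t)\|_{L^{2}} \lesssim \sqrt{E}$, while mass conservation gives $\|\eps^{\sharp}(t)\|_{L^{2}} \lesssim_{M} 1$; in particular, combined with the spatial decay of $Q$, we have $\|\chf_{r \geq R/2}\partial_{r}u(t)\|_{L^{2}}$ uniformly bounded on $[0,T)$.

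Fix $R > 0$ and a smooth radial cutoff $\phi$ supported in $\{r \geq R/2\}$ with $\phi \equiv 1$ on $\{r \geq R\}$. Since $\chf_{r \geq R}u = \chf_{r \geq R}\phi u$, it suffices to show that $\phi u(t)$ is Cauchy in $L^{2}$ as $t \to T$. Writing $\partial_{t}u = i\Delta^{(m)}u - i\calN(u)$ and using the pointwise identity $\Im(\bar u\,\calN(u)) = 0$ (each piece of $\calN$ is $u$ times a real-valued potential), a single integration by parts yields
\begin{equation}
\frac{d}{dt}\|\phi u(t)\|_{L^{2}}^{2} = 4\int \phi\phi'(r)\,\Im(\bar u\,\partial_{r}u),
\end{equation}
which is uniformly bounded in $t$ by Cauchy--Schwarz and the $H^{1}$-estimates above.

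For the cross inner product, $\partial_{t_{2}}(\phi u(t_{1}), \phi u(t_{2}))_{r} = (\partial_{t}u(t_{2}), \phi^{2}u(t_{1}))_{r}$; a similar integration by parts in the Laplacian term reduces it to pairings of $\nabla u(t_{2})$ against $\nabla(\phi^{2}u(t_{1}))$, both controlled uniformly by the $H^{1}$-bound on $\chf_{r \geq R/2}u$. For the nonlinear pairing $\int\overline{\calN(u(t_{2}))}\phi^{2}u(t_{1})$, we estimate $\|\phi\calN(u)\|_{L^{2}}$ uniformly: the local cubic $\phi|u|^{2}u$ is handled by $\|\phi u\|_{L^{\infty}} \lesssim_{R,M,E} 1$ (Hardy--Sobolev \eqref{eq:HardySobolevSection2} for $m \geq 1$; \eqref{eq:weighted-Linfty-est} restricted to $r \geq R/2$ for $m = 0$), while the non-local cubic and quintic pieces obey pointwise bounds $|\calN_{3,k}(u)|, |\calN_{5,k}(u)| \lesssim_{M} r^{-2}|u|$ stemming from $|A_{\theta}| \lesssim_{M} 1$ and $|A_{t}| \lesssim_{M} r^{-2}$ (consequences of mass conservation). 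Hence $|\partial_{t_{2}}(\phi u(t_{1}), \phi u(t_{2}))_{r}|$ is bounded by $C(R,M,E)$ uniformly in $t_{1}, t_{2}$.

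Combining these two Lipschitz-type bounds with the identity $\|\phi u(t_{1}) - \phi u(t_{2})\|_{L^{2}}^{2} = (\|\phi u(t_{2})\|_{L^{2}}^{2} - \|\phi u(t_{1})\|_{L^{2}}^{2}) + 2(\|\phi u(t_{1})\|_{L^{2}}^{2} - (\phi u(t_{1}), \phi u(t_{2}))_{r})$ (the second bracket produced by integrating $\partial_{s}(\phi u(t_{1}), \phi u(s))_{r}$ from $s = t_{1}$ to $s = t_{2}$), we obtain $\|\phi u(t_{1}) - \phi u(t_{2})\|_{L^{2}}^{2} \lesssim_{R,M,E} |t_{1} - t_{2}|$, so $\phi u(t)$ is Cauchy in $L^{2}$ as $t \to T$, converging to some $v_{R}^{*} \in L^{2}$. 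Consistency of the limits over varying $R$ yields $z^{*} \in L^{2}(\bbR^{2})$ with $\|z^{*}\|_{L^{2}} \leq \sqrt{M[u]}$ satisfying $\chf_{r \geq R}u(t) \to \chf_{r \geq R}z^{*}$ in $L^{2}$, and the lemma follows. The main technical obstacle is the uniform bound on $\|\phi\calN(u)\|_{L^{2}}$: despite the global $\|u\|_{L^{4}}$ and $\|u\|_{L^{\infty}}$ blowing up as $t \to T$, localization by $\phi$ to $\{r \geq R/2\}$ combined with the concentration of the soliton at the origin and the uniform $H^{1}$-bound on $\eps^{\sharp}$ makes all the above estimates go through.
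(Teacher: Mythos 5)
Your proof is correct, and it rests on the same two pillars as the paper's argument: a uniform bound $\|\chf_{r\gtrsim R}\partial_r u(t)\|_{L^2}+\|\chf_{r\gtrsim R}u(t)\|_{L^\infty}\lesssim_{R,M,E}1$ coming from $\|\eps\|_{\dot\calH_m^1}\lesssim_M\lambda\sqrt E$ plus the spatial decay of $Q$, and a uniform bound $\|\varphi_R\calN(u(t))\|_{L^2}\lesssim_{R,M,E}1$ coming from $|A_\theta|\lesssim_M 1$, $|A_t|\lesssim_M r^{-2}$, and the exterior $L^\infty$ control. The route by which you turn these bounds into the Cauchy property differs in a small but genuine way. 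The paper forms $\td u^\tau=\varphi_R\{u(t+\tau)-u(t)\}$, views it as solving an inhomogeneous linear Schr\"odinger equation with source $[\Delta_m,\varphi_R]u+\varphi_R\calN(u)$ (evaluated at two times), applies an $L^2$-energy estimate, and then has to dispose of the initial-data term $\|\td u^\tau(t_0)\|_{L^2}$ by invoking continuity of the flow at $t_0$. You instead bound $\frac{d}{dt}\|\phi u\|_{L^2}^2$ and $\partial_{t_2}(\phi u(t_1),\phi u(t_2))_r$ directly, which gives the quantitative Lipschitz estimate $\|\phi u(t_1)-\phi u(t_2)\|_{L^2}^2\lesssim_{R,M,E}|t_1-t_2|$ and makes the Cauchy property immediate, with no separate continuity step. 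Your approach also exploits the pointwise cancellation $\Im(\bar u\,\calN(u))=0$ to kill the nonlinear contribution to $\frac{d}{dt}\|\phi u\|^2$, though this gain is only partial since the cross term still needs the full $L^2$-bound on $\varphi_R\calN(u)$. Net effect: essentially the same proof, repackaged so the Cauchy step is self-contained and slightly stronger (a Lipschitz bound rather than a two-parameter smallness claim).
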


\begin{proof}
In the proof, let $\varphi_{R}$ be a smooth radial cutoff function
satisfying $\varphi_{R}(r)=1$ for $r\geq R$, $\varphi_{R}(r)=0$
for $r\leq\frac{R}{2}$, and $|\varphi_{R}|_{2}\lesssim1$.

We claim that: 
\begin{equation}
\text{For any \ensuremath{R>0}, }\{\varphi_{R}\eps^{\sharp}(t)\}\text{ is Cauchy in \ensuremath{L^{2}} as \ensuremath{t\to T}.}\label{eq:outer-conv-claim}
\end{equation}
Let us finish the proof assuming this claim. For each $R>0$, the
above claim says that $\{\chf_{r\geq R}\eps^{\sharp}(t)\}$ is Cauchy
in $L^{2}$ as $t\to T$. Thus there exists $z_{R}^{\ast}\in L^{2}(r\geq R)$
such that $\chf_{r\geq R}\eps^{\sharp}(t)\to z_{R}^{\ast}$. In view
of the uniqueness of the limit, we have $z_{R_{1}}^{\ast}=\chf_{r\geq R_{1}}z_{R_{2}}^{\ast}$
whenever $R_{1}\geq R_{2}>0$. Therefore, there exists unique profile
$z^{\ast}(r)$ such that, for any $R>0$, $\chf_{r\geq R}z^{\ast}\in L^{2}$
and $\chf_{r\geq R}\eps^{\sharp}(t)\to\chf_{r\geq R}z^{\ast}$ in
$L^{2}$ as $t\to T$. The fact that $z^{\ast}\in L^{2}$ follows
from the uniform boundedness of $\|\eps^{\sharp}(t)\|_{L^{2}}$ with
Fatou's lemma.

We turn to show the claim \eqref{eq:outer-conv-claim}. In this paragraph,
we will reduce the proof of \eqref{eq:outer-conv-claim} to the proof
of \eqref{eq:outer-claim1}. Fix any $\delta_{1}>0$ and $R>0$. It
suffices to show that: there exists $\delta_{2}>0$ such that $\|\varphi_{R}\{\eps^{\sharp}(t)-\eps^{\sharp}(s)\}\|_{L^{2}}<\delta_{1}$
for all $t,s\in(T-\delta_{2},T)$. This will follow from showing that:
there exist $t_{0}<T$ and $\delta_{2}\in(0,T-t_{0})$ such that $\|\varphi_{R}\{\eps^{\sharp}(t+\tau)-\eps^{\sharp}(t)\}\|_{L^{2}}<\delta_{1}$
for all $\tau\in(0,\delta_{2})$ and $t\in[t_{0},T-\tau)$. Now, thanks
to $\varphi_{R}Q_{\lmb(t),\gmm(t)}\to0$ in $L^{2}$ as $t\to T$
(due to $\lmb(t)\to0$), it suffices to show that:
\begin{equation}
\begin{gathered}\text{There exist \ensuremath{t_{0}<T} and \ensuremath{\delta_{2}\in(0,T-t_{0})} such that}\\
\sup_{\tau\in(0,\delta_{2})}\sup_{t\in[t_{0},T-\tau)}\|\varphi_{R}\{u(t+\tau)-u(t)\}\|_{L^{2}}<\delta_{1}.
\end{gathered}
\label{eq:outer-claim1}
\end{equation}

Henceforth, we show \eqref{eq:outer-claim1}. Denote $\td u^{\tau}(t)\coloneqq\varphi_{R}\{u(t+\tau)-u(t)\}$.
Then, 
\begin{align*}
(i\partial_{t}+\Delta_{m})\td u^{\tau}(t) & =[\Delta_{m},\varphi_{R}]u(t+\tau)+\varphi_{R}\mathcal{N}(u(t+\tau))\\
 & \quad-[\Delta_{m},\varphi_{R}]u(t)-\varphi_{R}\mathcal{N}(u(t)),
\end{align*}
so a standard $L^{2}$-energy estimate yields 
\[
\|\td u^{\tau}(t)\|_{L^{2}}\leq\|\td u^{\tau}(t_{0})\|_{L^{2}}+2(T-t_{0})\cdot\sup_{s\in[t_{0},T)}\|[\Delta_{m},\varphi_{R}]u(s)+\varphi_{R}\mathcal{N}(u(s))\|_{L^{2}}.
\]
In the next paragraph, we will show that for any $t_{0}$ sufficiently
close to $T$ 
\begin{equation}
\sup_{s\in[t_{0},T)}\|[\Delta_{m},\varphi_{R}]u(s)+\varphi_{R}\mathcal{N}(u(s))\|_{L^{2}}\lesssim_{R,M,E}1,\label{eq:outer-claim2}
\end{equation}
where $M=M[u]$ and $E=E[u]$. Assuming this, we are led to 
\[
\sup_{\tau\in(0,\delta_{2})}\sup_{t\in[t_{0},T-\tau)}\|\td u^{\tau}(t)\|_{L^{2}}\leq\sup_{\tau\in(0,\delta_{2})}\|\td u^{\tau}(t_{0})\|_{L^{2}}+C(R,M,E)\cdot(T-t_{0}).
\]
Choosing $t_{0}$ sufficiently close to $T$ and then choosing $\delta_{2}>0$
small (using continuity of the flow $t\mapsto u(t)\in L^{2}$ at time
$t=t_{0}$), the claim \eqref{eq:outer-claim1} follows.

It remains to show \eqref{eq:outer-claim2}. We fix a time $s\in[t_{0},T)$
and will obtain estimates uniformly in $s$. For $t_{0}$ sufficiently
close to $T$, we have $\lmb^{-1}(s)R>1$ and the following estimate:
\begin{align*}
 & \|\chf_{r\gtrsim R}\partial_{r}u\|_{L^{2}}+\|\chf_{r\gtrsim R}u\|_{L^{\infty}}\\
 & =\tfrac{1}{\lmb}(\|\chf_{y\gtrsim\lmb^{-1}R}\partial_{y}(Q+\eps)\|_{L^{2}}+\|\chf_{y\gtrsim\lmb^{-1}R}(Q+\eps)\|_{L^{\infty}})\\
 & \lesssim\tfrac{1}{\lmb}((\lmb R^{-1})^{m+2}+\|\eps\|_{\dot{\calH}_{m}^{1}})\lesssim R^{-1}+\lmb^{-1}\|\eps\|_{\dot{\calH}_{m}^{1}}\lesssim_{R,M,E}1.
\end{align*}
Using this, the commutator term in \eqref{eq:outer-claim2} can be
easily treated as 
\[
\|[\Delta_{m},\varphi_{R}]u\|_{L^{2}}\lesssim R^{-1}\|\chf_{r\sim R}\partial_{r}u\|_{L^{2}}+R^{-2}\|\chf_{r\sim R}u\|_{L^{2}}\lesssim_{R,M,E}1.
\]
To estimate the nonlinearity $\mathcal{N}(u)$, we note that 
\[
\mathcal{N}(u)=-|u|^{2}u+\mathcal{N}_{\ast}(u),
\]
where $\mathcal{N}_{\ast}(u)$ consists of the nonlocal nonlinearities
$\calN_{3,1},\calN_{3,2},\calN_{5,1},\calN_{5,2}$. For the local
nonlinearity, we have 
\[
\||u|^{2}\varphi_{R}u\|_{L^{2}}\lesssim\|\chf_{y\gtrsim R}u\|_{L^{\infty}}^{2}\|u\|_{L^{2}}\lesssim_{R}1.
\]
For the nonlocal nonlinearity, by the nonlinear estimate (Corollary
\ref{cor:nonlinear-estimates-weightedL2}), we have 
\[
\|\varphi_{R}\mathcal{N}_{\ast}(u)\|_{L^{2}}\lesssim(1+\|u\|_{L^{2}}^{2})\|u\|_{L^{2}}^{2}\|\tfrac{1}{r^{2}}\varphi_{R}u\|_{L^{2}}\lesssim_{R,M}1.
\]
This ends the proof of \eqref{eq:outer-claim2}.
\end{proof}
Next, we claim the weak $H_{m}^{1}$-convergence:
\begin{lem}[Weak $H_{m}^{1}$-convergence]
\label{lem:thm1-weak-H1-conv}We have $|z^{\ast}|_{-1}\in L^{2}$
and $\eps^{\sharp}(t)\weakto z^{\ast}$ in $H_{m}^{1}$. In particular,
$\eps^{\sharp}(t)\to z^{\ast}$ in $L_{\mathrm{loc}}^{2}$.
\end{lem}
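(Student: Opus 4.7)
My plan is to establish a uniform $H_{m}^{1}$-bound on the family $\{\eps^{\sharp}(t)\}_{t<T}$, extract a weak limit via Banach--Alaoglu, identify it with $z^{\ast}$ using the outer $L^{2}$-convergence from Lemma \ref{lem:thm1-outer-L2-conv}, and finally verify the Hardy-type control $r^{-1}z^{\ast}\in L^{2}$ separately in the delicate case $m=0$.

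For the uniform $H_{m}^{1}$-bound, the $L^{2}$-part follows immediately from $\eps^{\sharp}=u-Q^{\sharp}$ and mass conservation. The crucial input for the $\dot{H}_{m}^{1}$-part is the nonlinear coercivity of the energy \eqref{eq:decomp-nonlinear-coercivity}, which gives $\|\eps(t)\|_{\dot{\calH}_{m}^{1}}\lesssim_{M}\lambda(t)\sqrt{E}$; the change of variables $y=r/\lambda(t)$ then yields $\|\eps^{\sharp}(t)\|_{\dot{H}_{m}^{1}}=\lambda(t)^{-1}\|\eps(t)\|_{\dot{H}_{m}^{1}}\lesssim_{M}\sqrt{E}$, uniformly in $t$. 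Given this bound, any sequence $t_{n}\to T$ admits a subsequence $t_{n_{k}}$ along which $\eps^{\sharp}(t_{n_{k}})\weakto v$ in $H_{m}^{1}$. Rellich--Kondrachov on annuli $\{R_{1}\leq r\leq R_{2}\}$ upgrades this to strong $L^{2}$-convergence there, which combined with Lemma \ref{lem:thm1-outer-L2-conv} forces $v=z^{\ast}$ almost everywhere. Since every subsequential weak limit equals $z^{\ast}$, a standard subsequence argument promotes this to the full weak convergence $\eps^{\sharp}(t)\weakto z^{\ast}$ in $H_{m}^{1}$, and the $L_{\mathrm{loc}}^{2}$-strong convergence follows from Rellich.

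It remains to show $|z^{\ast}|_{-1}\in L^{2}$. For $m\geq1$, this is immediate: $z^{\ast}\in\dot{H}_{m}^{1}$ as a weak $H_{m}^{1}$-limit, and the generalized Hardy inequality \eqref{eq:GenHardySection2} finishes the job. The main obstacle, as expected, is the case $m=0$, where Hardy fails and the adapted norm $\dot{\calH}_{0}^{1}$ controls $r^{-1}\eps(t)$ only up to a $\langle\log_{-}r\rangle$-loss near the origin. The key observation that resolves this is that the concentration $\lambda(t)\to0$ pushes the logarithmically troublesome region to scales $r\lesssim\lambda(t)$, which shrink to zero: after the rescaling $y=r/\lambda(t)$, the exterior region $\{r\geq R_{0}\}$ is sent into $\{y\geq R_{0}/\lambda(t)\}\subset\{y\geq1\}$, on which $\langle\log_{-}y\rangle=1$. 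A direct change-of-variables computation then yields $\|\chf_{r\geq R_{0}}r^{-1}\eps^{\sharp}(t)\|_{L^{2}}\lesssim_{M}\sqrt{E}$, uniformly in such $R_{0}>0$ and $t$. Passing to an a.e.-convergent subsequence and applying Fatou gives $\|\chf_{r\geq R_{0}}r^{-1}z^{\ast}\|_{L^{2}}\lesssim_{M}\sqrt{E}$ uniformly in $R_{0}$, and monotone convergence as $R_{0}\to0^{+}$ concludes $r^{-1}z^{\ast}\in L^{2}$.
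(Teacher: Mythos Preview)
Your proof is correct and follows essentially the same approach as the paper: a uniform $H_{m}^{1}$-bound via the nonlinear coercivity \eqref{eq:decomp-nonlinear-coercivity}, Banach--Alaoglu plus Rellich--Kondrachov on annuli to identify the weak limit through Lemma~\ref{lem:thm1-outer-L2-conv}, and the exterior change-of-variables trick exploiting $\lambda(t)\to0$ to kill the logarithmic loss in $\dot{\calH}_{0}^{1}$. The only cosmetic differences are that the paper treats all $m\geq0$ at once in the $r^{-1}z^{\ast}$ step (the exterior argument you give for $m=0$ works verbatim for $m\geq1$ as well, since $\dot{\calH}_{m}^{1}=\dot{H}_{m}^{1}$ there), and that the paper passes to the limit using the outer $L^{2}$-convergence directly (multiplying by the bounded weight $\tfrac{1}{r}$ on $\{r\geq R_{0}\}$) rather than via Fatou along an a.e.\ subsequence.
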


\begin{proof}
First, we show that $\frac{1}{r}z^{\ast}\in L^{2}$. For any $R>0$,
we see from $\lmb(t)\to0$ that 
\begin{align*}
\|\chf_{r\geq R}\tfrac{1}{r}z^{\ast}\|_{L^{2}} & =\lim_{t\to T}\|\chf_{r\geq R}\tfrac{1}{r}\eps^{\sharp}(t)\|_{L^{2}}\\
 & =\lim_{t\to T}\tfrac{1}{\lmb}\|\chf_{y\geq\lmb^{-1}R}\tfrac{1}{y}\eps(t,y)\|_{L^{2}}\leq\limsup_{t\to T}\tfrac{1}{\lmb}\|\eps(t)\|_{\dot{\calH}_{m}^{1}}\lesssim1,
\end{align*}
where the implicit constant is uniform in $R$. Letting $R\to0$,
we have $\frac{1}{r}z^{\ast}\in L^{2}$.

Next, we show that $z^{\ast}\in H_{m}^{1}$ and $\eps^{\sharp}(t)\weakto z^{\ast}$
in $H_{m}^{1}$. By a further subsequence argument, it suffices to
show that (i) $z^{\ast}\in H_{m}^{1}$ and (ii) for any sequence $t_{n}\to T$
there exists a further subsequence $t_{n'}\to T$ such that $\eps^{\sharp}(t_{n'})\weakto z^{\ast}$
in $H_{m}^{1}$. Let $t_{n}\to T$ be arbitrary. Since $\{\eps^{\sharp}(t_{n})\}$
is $H_{m}^{1}$-bounded, it has a further subsequence $\{\eps^{\sharp}(t_{n'})\}$
such that $\eps^{\sharp}(t_{n'})\weakto z_{w}^{\ast}$ for some $z_{w}^{\ast}\in H_{m}^{1}$.
It now suffices to show that $z_{w}^{\ast}=z^{\ast}$. For this, it
suffices show that $\chf_{[R^{-1},R]}z_{w}^{\ast}=\chf_{[R^{-1},R]}z^{\ast}$
for any $R>1$. Fix $R>1$. On one hand, by the Rellich--Kondrachov
theorem, we have $\chf_{[R^{-1},R]}\eps^{\sharp}(t_{n'})\to\chf_{[R^{-1},R]}z_{w}^{\ast}$
in $L^{2}$. On the other hand, by the outer $L^{2}$-convergence
(Lemma \ref{lem:thm1-outer-L2-conv}), we have $\chf_{[R^{-1},R]}\eps^{\sharp}(t_{n'})\to\chf_{[R^{-1},R]}z^{\ast}$
in $L^{2}$. Thus $\chf_{[R^{-1},R]}z_{w}^{\ast}=\chf_{[R^{-1},R]}z^{\ast}$.
This completes the proof of the weak $H_{m}^{1}$-convergence.
\end{proof}
Lemmas \ref{lem:thm1-outer-L2-conv} and \ref{lem:thm1-weak-H1-conv}
show that $z^{\ast}\in L^{2}$, $|z^{\ast}|_{-1}\in L^{2}$, and $\eps^{\sharp}(t)\to z^{\ast}$
in $L^{2}$ as $t\to T$. This proves the decomposition \eqref{eq:thm1-decomp}
and the regularity of $z^{\ast}$ in Theorem \ref{thm:asymptotic-description}
for $H_{m}^{1}$-solutions. If in addition $u$ is a $H_{m}^{1,1}$-solution,
then we can appeal to the virial identities \eqref{eq:virial-1}-\eqref{eq:virial-2}
and observe that $\|ru(t)\|_{L^{2}}$ is bounded as $t\to T$. Thus
by the outer convergence (Lemma \ref{lem:thm1-outer-L2-conv}) and
Fatou property we have 
\begin{align*}
\|rz^{\ast}\|_{L^{2}}=\lim_{R\to0}\|\chf_{r\geq R}rz^{\ast}\|_{L^{2}} & \leq\lim_{R\to0}\liminf_{t\to T}\|\chf_{r\geq R}ru(t)\|_{L^{2}}\\
 & \qquad\leq\limsup_{t\to T}\|ru(t)\|_{L^{2}}<+\infty.
\end{align*}
Hence $rz^{\ast}\in L^{2}$. This ends the proof of Theorem \ref{thm:asymptotic-description}.

\bibliographystyle{plain}
\bibliography{References}

\end{document}